\colorlet{green}{black!30!green} % <------------------------------this redefines green to not be so bright and harsh!
\tikzstyle directed=[postaction={decorate,decoration={markings,
    mark=at position #1 with {\arrow{>}}}}]
\tikzstyle rdirected=[postaction={decorate,decoration={markings,
    mark=at position #1 with {\arrow{<}}}}]
\tikzset{anchorbase/.style={baseline={([yshift=-0.5ex]current bounding box.center)}}}
\tikzset{
    partial ellipse/.style args={#1:#2:#3}{
        insert path={+ (#1:#3) arc (#1:#2:#3)}
    }
}
\newcommand{\R}{\mathbb{R}}
\newcommand{\C}{\mathbb{C}}
\newcommand{\Z}{\mathbb{Z}}
\newcommand{\N}{\mathbb{N}}
\newcommand{\D}{\mathbb{D}}
\newcommand{\B}{{\bf B}}
\newcommand{\calC}{\mathcal{C}}
\newcommand{\calP}{\mathcal{P}}
\newcommand{\id}{\mathrm{id}}
\DeclareMathOperator{\Hom}{Hom}
\DeclareMathOperator{\End}{End}
\DeclareMathOperator{\HOM}{HOM}
\DeclareMathOperator{\END}{END}
\DeclareMathOperator{\Aut}{Aut}
\DeclareMathOperator{\qdim}{qdim}
\DeclareMathOperator{\Kom}{Kom}
\renewcommand{\tilde}{\widetilde}
\theoremstyle{theorem}
\newtheorem{theorem}{Theorem}
\newtheorem{lemma}{Lemma}
\newtheorem{corollary}{Corollary}
\newtheorem{remark}{Remark}
\newtheorem{definition}{Definition}
\newtheorem{example}{Example}
\newtheorem{problem}{Problem}
\newtheorem{proposition}{Proposition}
\begin{document}
%
% ==============================================================================

\title{Khovanov-Seidel braid representation}

\author{Hoel Queffelec}
\address{IMAG\\ Univ. Montpellier\\ CNRS \\ Montpellier \\ France}
\address{Mathematical Sciences Institute \\ Australian National University}
\email{hoel.queffelec@umontpellier.fr}

\begin{abstract}
  These are lecture notes from a lecture series given at CIRM in the Fall 2023. They give a down-to-earth introduction to Khovanov and Seidel's categorical representation of Artin-Tits groups, emphasizing the fact that it is all explicitly computable. Several prospective applications to (geometric) group theory are mentioned.
\end{abstract}

\maketitle

\section{Introduction}

These lecture notes are devoted to a categorical representation of braid groups (and then some Artin-Tits groups) that was introduced by Khovanov and Seidel in the early 2000's~\cite{KhS}. Khovanov and Seidel's categorical action lifts up the more classical Burau representation~\cite{Burau}. This latter matrix representation has been proven to be faithful in the case of the 3-strand braid group (see~\cite{Birman_book} and references therein), and unfaithful for 5 strands or more~\cite{Moody, LongPaton, Bigelow_Burau}. Which leaves the 4-strand case widely open, and one of the most tantalizing questions in the field.

Once upgraded to a categorical version, the representation becomes faithful: that's the main result of Khovanov and Seidel, that was later extended to more general classes of Artin-Tits groups~\cite{RouquierZimmerman,KhS,BravThomas,Riche, IshiiUedaUehara,GTW}. However, as for most questions on Artin-Tits groups (see~\cite{GodelleParis}), most cases remain open, which is closely related to the question of the faithfulness of the action of Artin-Tits groups on Soergel bimodules~\cite{Rouquier_conjecture,Jensen}. Interestingly, the faithfulness issue of the Burau representation was solved at the same time in two different manners: by Khovanov and Seidel, as explained, by going up one categorical level, but also by Lawrence, Krammer and Bigelow~\cite{Lawrence, Krammer, Bigelow_linearity} who deformed the second symmetric power of the Burau representation to make it faithful, thus proving braid groups to be linear.

In these lecture notes, I want to sell Khovanov-Seidel's machinery as a very handy and highly computable process. The categorical objects involved are rather elementary, and the computations come down to easy linear algebra. But at the same time, it seems to carry quite a lot of geometry -- although exploiting it turns out to be harder than one might expect. In that regard, I'll highlight a few questions originating from geometric group theory where I'm hoping to get some insight from Khovanov-Seidel categorical action.

\noindent {\bf Acknowledgements:} these notes are based on four lectures given at CIRM in November 2023. I'd like to thank Claire Amiot, Thomas Brüstle, Yann Palu and Pierre-Guy Plamondon for their invitation and encouragements, as well as the audience (in particular Agnès Gadbled, Edmnd Heng, Yu Qiu, Anne-Laure Thiel and Emmanuel Wagner). I've learnt all of this material from discussions with Asilata Bapat, Anand Deopurkar and Tony Licata, and all the geometric group theory aspects come from Thomas Haettel's infinite patience. 

This work has been funded by the European Union's Horizon 2020 research and innovation programme under the Marie Sklodowska-Curie grant agreement No 101064705.

\section{Artin-Tits groups and representations}

\subsection{Braids and Artin-Tits groups}

\begin{definition}
  A \emph{Coxeter diagram}, or \emph{Coxeter graph}, is a graph with edges labeled in the set $\{3,\dots,\infty\}$. The label of $3$-labeled edges is usually omitted. 
\end{definition}

Coxeter graphs represent Coxeter systems, that generate Coxeter groups. These are sets $(S,R)$ of generators $s_i$ of order $2$ subject to relations $r_k$ of the kind: $(s_is_j)^{m_k}=1$. The vertices of the graph correspond to the generators, while edge labeling indicates the $m_k$'s. Non-adjacent vertices correspond to $m_k=2$ (then the generators commute), while $m_k=\infty$ means that there is no relation between the corresponding generators. Such a graph can be assumed to have at most one edge between any two vertices.

\begin{example} \label{ex:S3}
  \[
    A_3=\begin{tikzpicture}[anchorbase]
      \node (1) at (1,0) {$\bullet$};
      \node (2) at (2,0) {$\bullet$};
      \node (3) at (3,0) {$\bullet$};
      \draw (1.center) -- (2.center);
      \draw (2.center) -- (3.center);
    \end{tikzpicture}
  \]
  The Coxeter group in type $A_3$ is the symmetric group on four letters $\mathfrak{S}_4$.
\end{example}

\begin{example}
  \[
  \begin{tikzpicture}[anchorbase]
    \node (1) at (1,2) {$\bullet$};
    \node [above] at (1) {$1$};
    \node (2) at (0,1) {$\bullet$};
    \node [left] at (2) {$2$};
    \node (3) at (2,1) {$\bullet$};
    \node [right] at (3) {$3$};
    \node (4) at (1,0) {$\bullet$};
    \node [below] at (4) {$4$};
    \draw (1.center) -- node [midway,above,rotate=45] {$\infty$} (2.center);
    \draw (1.center) -- (3.center);
    \draw (3.center) -- node [midway, above, rotate=45] {$4$} (4.center);
  \end{tikzpicture}
  \quad  \leftrightarrow
  \quad
  \begin{cases}
    S=\{s_1,s_2,s_3,s_4\}
    \\
    R= \begin{Bmatrix}   s_1s_3 s_1 = s_3 s_1 s_3,\; s_3s_4s_3s_4=s_4s_3s_4s_3 \\
     s_2s_3=s_3s_2,\; s_1s_4=s_4s_1
\end{Bmatrix}
    \end{cases}
  \]
  Then:
  \[
G_\Gamma=\frac{\langle s_1,s_2,s_3,s_4 \rangle}{\{s_i^2=1\}\cup R}
  \]
\end{example}

\begin{example}
 Generalizing Example~\ref{ex:S3}, Weyl groups are Coxeter groups.
\end{example}

\begin{theorem}
  A Coxeter group is finite if and only if its associated Coxeter diagram is of type: $A_n$, $B_n=C_n$, $D_n$, $E_6$, $E_7$, $E_8$, $F_4$, $G_2$, $H_2$, $H_3$, $H_4$, $I_n$.
  \begin{gather*}
  \begin{tikzpicture}[anchorbase,scale=.5]
      \node at (2.5,.7) {\small $A_n$};
      \node (1) at (1,0) {$\bullet$};
      \node (2) at (2,0) {$\bullet$};
      \node (d) at (3,0) {$\cdots$};
      \node (n-1) at (4,0) {$\bullet$};
      \node (n) at (5,0) {$\bullet$};
      \draw (1.center) -- (2.center);
      \draw (n-1.center) -- (n.center);
    \end{tikzpicture}
    \quad,\quad
    \begin{tikzpicture}[anchorbase,scale=.5]
            \node at (3,.7) {\small $B_n=C_n$};
      \node (1) at (1,0) {$\bullet$};
      \node (2) at (2,0) {$\bullet$};
      \node (d) at (3,0) {$\cdots$};
      \node (n-2) at (4,0) {$\bullet$};
      \node (n-1) at (5,0) {$\bullet$};
      \node (n) at (6,0) {$\bullet$};
      \draw (1.center) -- (2.center);
      \draw (n-2.center)  -- (n-1.center);
      \draw (n-1.center) -- (n.center) node [midway,above]{\tiny $4$} node [midway,below] {\tiny \vphantom{$4$}};
    \end{tikzpicture}
    \quad,\quad
    \begin{tikzpicture}[anchorbase,scale=.5]
      \node at (3.5,.7) {\small $D_n$};
      \node (1) at (1,0) {$\bullet$};
      \node (2) at (2,0) {$\bullet$};
      \node (d) at (3,0) {$\cdots$};
      \node (n-3) at (4,0) {$\bullet$};
      \node (n-2) at (5,0) {$\bullet$};
      \node (n-1) at (6,.5) {$\bullet$};
      \node (n) at (6,-.5) {$\bullet$};
      \draw (1.center) -- (2.center);
      \draw (n-3.center) -- (n-2.center);
      \draw (n-2.center) -- (n-1.center);      
      \draw (n-2.center) -- (n.center);
      \end{tikzpicture}
\\
    \begin{tikzpicture}[anchorbase,scale=.5]
      \node at (2,.7) {\small $E_6$};
      \node (1) at (1,0) {$\bullet$};
      \node (2) at (2,0) {$\bullet$};
      \node (3) at (3,0) {$\bullet$};
      \node (4) at (3,1) {$\bullet$};
      \node (5) at (4,0) {$\bullet$};
      \node (6) at (5,0) {$\bullet$};
      \draw (1.center) -- (2.center);
      \draw (2.center) -- (3.center);
      \draw (3.center) -- (4.center);      
      \draw (3.center) -- (5.center);
      \draw (5.center) -- (6.center);
      \end{tikzpicture}
    \quad,\quad
    \begin{tikzpicture}[anchorbase,scale=.5]
      \node at (2,.7) {\small $E_7$};
      \node (1) at (1,0) {$\bullet$};
      \node (2) at (2,0) {$\bullet$};
      \node (3) at (3,0) {$\bullet$};
      \node (4) at (3,1) {$\bullet$};
      \node (5) at (4,0) {$\bullet$};
      \node (6) at (5,0) {$\bullet$};
      \node (7) at (6,0) {$\bullet$};
      \draw (1.center) -- (2.center);
      \draw (2.center) -- (3.center);
      \draw (3.center) -- (4.center);      
      \draw (3.center) -- (5.center);
      \draw (5.center) -- (6.center);
      \draw (6.center) -- (7.center);
      \end{tikzpicture}
    \quad,\quad
    \begin{tikzpicture}[anchorbase,scale=.5]
      \node at (2,.7) {\small $E_8$};
      \node (1) at (1,0) {$\bullet$};
      \node (2) at (2,0) {$\bullet$};
      \node (3) at (3,0) {$\bullet$};
      \node (4) at (3,1) {$\bullet$};
      \node (5) at (4,0) {$\bullet$};
      \node (6) at (5,0) {$\bullet$};
      \node (7) at (6,0) {$\bullet$};
      \node (8) at (7,0) {$\bullet$};
      \draw (1.center) -- (2.center);
      \draw (2.center) -- (3.center);
      \draw (3.center) -- (4.center);      
      \draw (3.center) -- (5.center);
      \draw (5.center) -- (6.center);
      \draw (6.center) -- (7.center);
      \draw (7.center) -- (8.center);
    \end{tikzpicture}
    \\
    \begin{tikzpicture}[anchorbase,scale=.5]
      \node at (1.5,.7) {\small $F_4$};
      \node (1) at (1,0) {$\bullet$};
      \node (2) at (2,0) {$\bullet$};
      \node (3) at (3,0) {$\bullet$};
      \node (4) at (4,0) {$\bullet$};
      \draw (1.center) -- (2.center);
      \draw (2.center) -- (3.center) node [midway, above] {\small $4$};
      \draw (3.center) -- (4.center);      
    \end{tikzpicture}
    \quad,\quad
    \begin{tikzpicture}[anchorbase,scale=.5]
      \node at (.5,.7) {\small $G_2$};
      \node (1) at (1,0) {$\bullet$};
      \node (2) at (2,0) {$\bullet$};
      \draw (1.center) -- (2.center)  node [midway, above] {\small $6$};
    \end{tikzpicture}
    \quad,\quad
    \begin{tikzpicture}[anchorbase,scale=.5]
      \node at (.5,.7) {\small $H_2$};
      \node (1) at (1,0) {$\bullet$};
      \node (2) at (2,0) {$\bullet$};
      \draw (1.center) -- (2.center)  node [midway, above] {\small $5$};
    \end{tikzpicture}
    \quad,\quad
    \begin{tikzpicture}[anchorbase,scale=.5]
      \node at (2.5,.7) {\small $H_3$};
      \node (1) at (1,0) {$\bullet$};
      \node (2) at (2,0) {$\bullet$};
      \node (3) at (3,0) {$\bullet$};
      \draw (1.center) -- (2.center)  node [midway, above] {\small $5$};
      \draw (2.center) -- (3.center);
    \end{tikzpicture}
    \quad,\quad
    \begin{tikzpicture}[anchorbase,scale=.5]
      \node at (2.5,.7) {\small $H_4$};
      \node (1) at (1,0) {$\bullet$};
      \node (2) at (2,0) {$\bullet$};
      \node (3) at (3,0) {$\bullet$};
      \node (4) at (4,0) {$\bullet$};
      \draw (1.center) -- (2.center)  node [midway, above] {\small $5$};
      \draw (2.center) -- (3.center);
      \draw (3.center) -- (4.center);
    \end{tikzpicture}    
    \quad,\quad
    \begin{tikzpicture}[anchorbase,scale=.5]
      \node at (.5,.7) {\small $I_n$};
      \node (1) at (1,0) {$\bullet$};
      \node (2) at (2,0) {$\bullet$};
      \draw (1.center) -- (2.center)  node [midway, above] {\small $n$};
    \end{tikzpicture}    
  \end{gather*}
\end{theorem}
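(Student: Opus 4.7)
The plan is to pass through the geometric (Tits) representation of the Coxeter group $W_\Gamma$ associated to the diagram $\Gamma$. Given $\Gamma$ on generators $S$, form the real vector space $V = \bigoplus_{s \in S} \R \alpha_s$ and equip it with the symmetric bilinear form $B_\Gamma$ defined by $B_\Gamma(\alpha_s, \alpha_t) = -\cos(\pi/m_{st})$, with the conventions $m_{ss}=1$ and $m_{st}=2$ for non-adjacent vertices. The formula $s \mapsto \bigl(v \mapsto v - 2 B_\Gamma(\alpha_s, v)\alpha_s\bigr)$ then defines a faithful representation $W_\Gamma \hookrightarrow O(V, B_\Gamma)$. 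The crucial reduction is the equivalence
\[
W_\Gamma \text{ is finite} \;\Longleftrightarrow\; B_\Gamma \text{ is positive definite.}
\]
One direction is easy: if $B_\Gamma$ is positive definite, then $W_\Gamma$ embeds into the compact orthogonal group $O(V,B_\Gamma)$ as a discrete subgroup and hence is finite. The converse is obtained by averaging any positive definite inner product on $V$ against the $W_\Gamma$-action and identifying the resulting invariant form with $B_\Gamma$ on each irreducible summand.

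Next I would reduce to the connected case: a diagram with several components gives a direct product of Coxeter groups, and both finiteness and positive definiteness are stable under direct products. It remains to classify connected diagrams whose Tits form is positive definite, and for this I would rely on two organizing principles. First, any full subdiagram of a positive-definite diagram is again positive-definite, so it suffices to pinpoint a short list of non-positive-definite \emph{forbidden} subdiagrams: the affine Coxeter diagrams $\tilde A_n, \tilde B_n, \tilde C_n, \tilde D_n, \tilde E_6, \tilde E_7, \tilde E_8, \tilde F_4, \tilde G_2$, each of which admits an explicit vector in the kernel of its form, together with a handful of small diagrams carrying too-large edge labels, which one rules out by exhibiting an explicit vector of non-positive norm. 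Second, for each surviving candidate one verifies positive definiteness directly, for instance by computing the principal minors of the Gram matrix inductively along the diagram.

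The main obstacle is then the combinatorial case analysis. From the forbidden-subdiagram list one extracts strong structural constraints on any connected positive-definite $\Gamma$: it must be a tree, with at most one branching vertex and branching of valency exactly $3$; the three arms emanating from that branching vertex must have lengths $p,q,r$ satisfying the Platonic-type inequality $\tfrac{1}{p}+\tfrac{1}{q}+\tfrac{1}{r}>1$; and the admissible edge labels shrink as one moves inward, with labels $\geq 4$ only on pendant edges, labels $\geq 6$ only in the rank-$2$ case $I_n$, and label $5$ confined to $H_2, H_3, H_4$. Running this enumeration exhaustively produces exactly the list of diagrams in the statement. The easy side is checking that each listed diagram does carry a positive definite form; the delicate side, and the real content of the proof, is proving the exhaustivity of the list, which amounts to a finite but intricate diagrammatic case check.
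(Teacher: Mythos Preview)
Your outline is correct and is essentially the classical argument: reduce finiteness of $W_\Gamma$ to positive definiteness of the Tits form via the geometric representation, then classify connected positive-definite Coxeter graphs by the forbidden-subdiagram / affine-diagram method. This is exactly the proof given in Humphreys, \emph{Reflection groups and Coxeter groups}, Section~2.7.

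Note, however, that the paper does not supply its own proof of this theorem at all: immediately after the statement it simply writes ``A proof of this theorem is given for example in~[Humphreys, Section 2.7].'' So there is nothing in the paper to compare your argument against beyond that citation, and your proposal matches the cited source.
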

A proof of this theorem is given for example in~\cite[Section 2.7]{Humphreys}.

\begin{definition}
  Let $\Gamma$ be a Coxeter diagram. Its associated \emph{Artin-Tits groups} $\B(\Gamma)$ is the group with :
  \begin{itemize}
\item generators $\sigma_i$ for $i \in V(\Gamma)$ the vertices of $\Gamma$;
\item relations:
  \[
\underbrace{\sigma_i\sigma_j\sigma_i\cdots}_\text{$m_{i,j}$ times}=\underbrace{\sigma_j\sigma_i\sigma_j\cdots}_\text{$m_{i,j}$ times}
\]
where $m_{i,j}$ is the label of the edge between the nodes corresponding to $\sigma_i$ and $\sigma_j$ ($m_{i,j}=2$ if the two vertices are not adjacent, and $m_{i,j}=\infty$ means that there is no relation).
\end{itemize}
\end{definition}

We will denote by $\B(\Gamma)^+$ the monoid generated by the $\sigma_i$'s with the same relations.

\begin{remark}
  By further imposing the generators to have order $2$, there is a natural surjection from the Artin-Tits group to the Coxeter group:
  \begin{align*}
    \B(\Gamma) &\twoheadrightarrow  G_\Gamma \\
    \sigma_i & \rightarrow s_i
    \end{align*}
\end{remark}

\begin{example}
  In type $A$, one gets the classical braid groups. Consider for example type $A_3$ labeled as follows:
  \[
    A_3=\begin{tikzpicture}[anchorbase]
      \node (1) at (1,0) {$\bullet$};
      \node at (1,.3) {\tiny $1$};
      \node at (1,-.3) {\tiny \vphantom{$1$}};
      \node (2) at (2,0) {$\bullet$};
      \node at (2,.3) {\tiny $2$};
      \node (3) at (3,0) {$\bullet$};
      \node at (3,.3) {\tiny $3$};
      \draw (1.center) -- (2.center);
      \draw (2.center) -- (3.center);
    \end{tikzpicture}
  \]
  The corresponding group $\B(\Gamma)$ has generators $\sigma_1$, $\sigma_2$ and $\sigma_3$. They admit diagrammatic depictions as follows:
  \[
    \sigma_1=\begin{tikzpicture}[anchorbase, scale=.5]
      \draw [thick] (1,0) to [out=90,in=-90] (0,1);
      \draw [white, line width=5] (0,0) to [out=90,in=-90] (1,1);
      \draw [thick] (0,0) to [out=90,in=-90] (1,1);
      \draw [thick] (2,0) -- (2,1);
      \draw [thick] (3,0) -- (3,1);
    \end{tikzpicture}
    \quad,\quad
        \sigma_2=\begin{tikzpicture}[anchorbase, scale=.5]
      \draw [thick] (0,0) -- (0,1);
      \draw [thick] (2,0) to [out=90,in=-90] (1,1);
      \draw [white, line width=5] (1,0) to [out=90,in=-90] (2,1);
      \draw [thick] (1,0) to [out=90,in=-90] (2,1);
      \draw [thick] (3,0) -- (3,1);
      \end{tikzpicture}
    \quad,\quad
    \sigma_3=\begin{tikzpicture}[anchorbase, scale=.5]
      \draw [thick] (0,0) -- (0,1);
      \draw [thick] (1,0) -- (1,1);
      \draw [thick] (3,0) to [out=90,in=-90] (2,1);
      \draw [white, line width=5] (2,0) to [out=90,in=-90] (3,1);
      \draw [thick] (2,0) to [out=90,in=-90] (3,1);
      \end{tikzpicture}
    \]
    Nodes $1$ and $3$ not sharing an edge governs the fact that $\sigma_1$ and $\sigma_3$ commute, while nodes $1$ and $2$ being adjacent means that the corresponding generators braid: $\sigma_1\sigma_2\sigma_2=\sigma_2\sigma_1\sigma_2$ (and similarly for nodes $2$ and $3$).
    \[
      \begin{tikzpicture}[scale=.5, anchorbase]
        % Sigma1
        \draw [thick] (1,0) to [out=90,in=-90] (0,1);
        \draw [white, line width=5] (0,0) to [out=90,in=-90] (1,1);
        \draw [thick] (0,0) to [out=90,in=-90] (1,1);
      \draw [thick] (2,0) -- (2,1);
      \draw [thick] (3,0) -- (3,1);
      % Sigma3
      \draw [thick] (0,1) -- (0,2);
      \draw [thick] (1,1) -- (1,2);
      \draw [thick] (3,1) to [out=90,in=-90] (2,2);
      \draw [white, line width=5] (2,1) to [out=90,in=-90] (3,2);
      \draw [thick] (2,1) to [out=90,in=-90] (3,2);
    \end{tikzpicture}
    =
          \begin{tikzpicture}[scale=.5, anchorbase]
        % Sigma1
        \draw [thick] (1,1) to [out=90,in=-90] (0,2);
        \draw [white, line width=5] (0,1) to [out=90,in=-90] (1,2);
        \draw [thick] (0,1) to [out=90,in=-90] (1,2);
      \draw [thick] (2,1) -- (2,2);
      \draw [thick] (3,1) -- (3,2);
      % Sigma3
          \draw [thick] (0,0) -- (0,1);
      \draw [thick] (1,0) -- (1,1);
      \draw [thick] (3,0) to [out=90,in=-90] (2,1);
      \draw [white, line width=5] (2,0) to [out=90,in=-90] (3,1);
      \draw [thick] (2,0) to [out=90,in=-90] (3,1);
    \end{tikzpicture}
    \quad,\quad
    \begin{tikzpicture}[scale=.5, anchorbase]
        % Sigma1
        \draw [thick] (1,0) to [out=90,in=-90] (0,1);
        \draw [white, line width=5] (0,0) to [out=90,in=-90] (1,1);
        \draw [thick] (0,0) to [out=90,in=-90] (1,1);
      \draw [thick] (2,0) -- (2,1);
      \draw [thick] (3,0) -- (3,1);
      % Sigma2
      \draw [thick] (0,1) -- (0,2);
      \draw [thick] (2,1) to [out=90,in=-90] (1,2);
      \draw [white, line width=5] (1,1) to [out=90,in=-90] (2,2);
      \draw [thick] (1,1) to [out=90,in=-90] (2,2);
      \draw [thick] (3,1) -- (3,2);
      % Sigma1
      \draw [thick] (1,2) to [out=90,in=-90] (0,3);
        \draw [white, line width=5] (0,2) to [out=90,in=-90] (1,3);
        \draw [thick] (0,2) to [out=90,in=-90] (1,3);
      \draw [thick] (2,2) -- (2,3);
      \draw [thick] (3,2) -- (3,3);
    \end{tikzpicture}
    =
          \begin{tikzpicture}[scale=.5, anchorbase]
        % Sigma2
      \draw [thick] (0,0) -- (0,1);
      \draw [thick] (2,0) to [out=90,in=-90] (1,1);
      \draw [white, line width=5] (1,0) to [out=90,in=-90] (2,1);
      \draw [thick] (1,0) to [out=90,in=-90] (2,1);
      \draw [thick] (3,0) -- (3,1);
      % Sigma1
      \draw [thick] (1,1) to [out=90,in=-90] (0,2);
      \draw [white, line width=5] (0,1) to [out=90,in=-90] (1,2);
      \draw [thick] (0,1) to [out=90,in=-90] (1,2);
      \draw [thick] (2,1) -- (2,2);
      \draw [thick] (3,1) -- (3,2);
      % Sigma2
      \draw [thick] (0,2) -- (0,3);
      \draw [thick] (2,2) to [out=90,in=-90] (1,3);
      \draw [white, line width=5] (1,2) to [out=90,in=-90] (2,3);
      \draw [thick] (1,2) to [out=90,in=-90] (2,3);
      \draw [thick] (3,2) -- (3,3);
    \end{tikzpicture}
    \]
For the equivalence between group-theoretic and geometric descriptions (and for much more!), see~\cite[Chapter1]{Birman_book}.  
\end{example}

There are only few examples where Artin-Tits groups can be presented by pictures. In addition to type $A$, one can cite:
\begin{itemize}
\item type $\tilde{A}$ (affine type $A$) and $B/C$, with braids in an annulus:
  \[
  \begin{tikzpicture}[anchorbase, scale=.66]
    % Annulus
    \draw [opacity=.4] (0,0) ellipse (2 and 1);
    \draw [opacity=.4] (0,4) ellipse (2 and 1);
    \draw [opacity=.4] (-2,4) -- (-2,0);
    \draw [opacity=.4] (2,4) -- (2,0);
    % Braid
    \draw [thick] (0,-1) to [out=90,in=-90] (-2,1);
    \draw [dotted] (-2,1) to [out=90,in=-90,looseness=.5] (2,1.5);
    \draw [thick] (2,1.5) to [out=90,in=-90] (1,2.5) -- (1,3.15);
    \draw [white, line width=5] (-1,-.85) to [out=90,in=-90] (0,3);
    \draw [thick] (-1,-.85) to [out=90,in=-90] (0,3);
    \draw [white, line width=5] (2,2.5) to [out=-90,in=90] (1,2) -- (1,-.85);
    \draw [thick] (-1,3.15) to [out=-90,in=90] (-2,3);
    \draw [dotted] (-2,3) to [out=-90,in=90,looseness=.5] (2,2.5);
    \draw [thick] (2,2.5) to [out=-90,in=90] (1,2) -- (1,-.85);
    \end{tikzpicture}
  \]
\item type $D$ and affine type $D$, where there is an orbifold relation: see~\cite{Allcock}.
\end{itemize}

\begin{example}
A class of Artin-Tits groups that seems very dear to geometric group theorists is the one of right-angled Artin groups. There any two nodes in the diagram are either unrelated or related by an edge labeled $\infty$. So any two generators either commute or generate a free subgroup. We refer to~\cite{Koberda_RAAG} for a recent survey on this topic.
\end{example}

The projection $\B(\Gamma)\twoheadrightarrow G_\Gamma$ admits a set-theoretic section as follows:
\begin{align}
  G_\Gamma & \rightarrow \B(\Gamma) \\
  s_{i_1}\cdots s_{i_k} \;\text{reduced word} \rightarrow \sigma_{i_1}\rightarrow \sigma_{i_k}
\end{align}
It can be checked that the definition doesn't depend on a choice of a reduced word. This is called the positive lift from the Coxeter group to the braid group.

In finite type, the long element $\delta$ lifts to a special element that we will denote $\Delta$.

\begin{example}
  In type $A_2$,
  \[
  \begin{tikzpicture}[anchorbase, scale=.5]
    \draw [thick] (0,0) to [out=90,in=-90] (1,1) to [out=90,in=-90] (2,2) -- (2,3);
    \draw [thick] (1,0) to [out=90,in=-90] (0,1) -- (0,2) to [out=90,in=-90] (1,3);
    \draw [thick] (2,0) -- (2,1) to [out=90,in=-90] (1,2) to [out=90,in=-90] (0,3);
  \end{tikzpicture}
  \quad \rightarrow \quad
  \begin{tikzpicture}[anchorbase, scale=.5]
    \draw [thick] (2,0) -- (2,1) to [out=90,in=-90] (1,2) to [out=90,in=-90] (0,3);
    \draw [white, line width=5] (1,0) to [out=90,in=-90] (0,1) -- (0,2) to [out=90,in=-90] (1,3);
    \draw [thick] (1,0) to [out=90,in=-90] (0,1) -- (0,2) to [out=90,in=-90] (1,3);
    \draw [white, line width=5] (0,0) to [out=90,in=-90] (1,1) to [out=90,in=-90] (2,2) -- (2,3);
    \draw [thick] (0,0) to [out=90,in=-90] (1,1) to [out=90,in=-90] (2,2) -- (2,3);
  \end{tikzpicture}
  \]

  In type $A$ $\Delta$ is the positive half-twist.
\end{example}

In finite type, it was shown in~\cite{Garside,ElrifaiMorton} that the interval $[1,\Delta]$ defined as follows is a lattice:
\[
[1,\Delta]:=\{ \beta \in \B(\Gamma)^+ | \exists \beta' \in \B(\Gamma^+) \;\text{such that}\; \beta \beta'=\Delta\}
\]
This means that we can take gcd's and lcm's. This is the beginning of Garside theory, which in particular yields normal forms for braids. We won't exploit this structure a lot here, but another one that is closely related in spirit, and often referred to as the dual Garside structure -- the one defined above being the classical one.

\subsection{Dual generators, dual Garside structure} \label{sec:dualgen}

Choose a Coxeter element $c\in W$, which is a product of the generators $s_i$ in some order. It can be lifted into a positive word of the braid group, giving an element $\gamma$. Our favorite example will be the following one, in type $A$:
\[
c=s_1s_2\cdots s_n,\quad \gamma=\sigma_1\sigma_2\cdots \sigma_n.
\]
Such a choice corresponds to an orientation of $\Gamma$, with the convention that an edge maps from $i$ to $j$ if $s_i$ appears before $s_j$ in the expression of $c$.

In this version of the theory, we'll try to lift reflections $t_\alpha\in W$ (those are indexed by roots, but we won't really use the root formalism much). In the Weyl group, reflections can be obtained as follows: $t=ws_iw^{-1}$. Trying to mimic this, let us now denote $[1,\gamma]$ the set:
\[
[1,\gamma]=\{\text{left subwords of writings }\gamma=T_1\cdots T_n,\;T_i=\beta_i\sigma_j\beta_i^{-1}\}
\]
To highlight the choices made, consider the case of $s_1s_2s_1=t_{\alpha_1+\alpha_2}$ in type $A_3$. When trying to lift this to the braid group, one has several equally natural options: $\sigma_1\sigma_2\sigma_1$, using the positive lift? $\sigma_1\sigma_2\sigma_1^{-1}$, which retains some symmetry since $\sigma_1\sigma_2\sigma_1^{-1}=\sigma_2^{-1}\sigma_1\sigma_2$? Or why not $\sigma_1\sigma_2^{-1}\sigma_1$? Here the magic is that the choice of $\gamma$ will give us a unique lift for each reflection, and that those will arrange nicely.

This set of generator was studied by Birman-Ko-Lee~\cite{BKL} and Bessis~\cite{Bessis}, who proved the following theorem:
\begin{theorem}
  $[1,\gamma]$ is a lattice.
\end{theorem}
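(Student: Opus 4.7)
The plan is to follow Bessis by \emph{shadowing} the braid-level set $[1,\gamma]$ with its Coxeter-theoretic counterpart and then proving the lattice property downstairs in $W$. First I would equip $[1,\gamma]$ with the partial order $a\preceq b$ iff $b=ab'$ for some $b'\in[1,\gamma]$, so that $a$ is a left prefix of $b$ in some dual factorization witnessing $b$. This is the only order for which candidate meets and joins stand a chance, and its good behavior will come from transferring it to $W$.

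The key reduction is to construct a length-preserving bijection between $[1,\gamma]$ and the interval $[1,c]_T\subset W$ in the \emph{absolute order}, where the absolute length $\ell_T(w)$ counts the minimal number of reflections needed to write $w$, and $[1,c]_T=\{w\in W\mid \ell_T(w)+\ell_T(w^{-1}c)=\ell_T(c)\}$. Each factor $T_i=\beta_i\sigma_j\beta_i^{-1}$ projects to a reflection $t_i\in W$, and conversely I would show that each reflection $t\leq_T c$ admits a \emph{unique} positive lift to a conjugate of some $\sigma_j$ by a positive braid. Uniqueness relies on compatibility with the Hurwitz braid action on minimal reflection factorizations, which swaps adjacent $T_i$'s while preserving the product $\gamma$; a careful inductive argument using the classical Garside structure on $\B(\Gamma)^+$ then transfers the poset structure isomorphically.

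Once this bijection is in hand, the lattice property for $[1,\gamma]$ is equivalent to the lattice property for $[1,c]_T$ in $W$. In type $A_n$, one invokes the classical Biane identification of $[1,c]_T$ with the noncrossing partition lattice on $n+1$ points, where meets are given by the common refinement of two noncrossing partitions (and joins by iterating the same construction) -- this is elementary and makes the geometry transparent. For general finite Coxeter types, one can either use Bessis' case-by-case verification for the exceptional types or appeal to the uniform geometric arguments of Brady--Watt via the reflection action on the ambient real vector space.

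The main obstacle, in my view, is securing the Hurwitz-action lemma that underpins uniqueness of positive lifts: one needs any two minimal positive reflection factorizations of the same element to be Hurwitz-equivalent, which is subtle because such factorizations naively look very different and the argument must delicately interact with the classical Garside normal form. The remaining input -- the lattice structure of noncrossing partitions -- is classical in type $A$ but remains type-dependent in general, reflecting the fact that a uniform and conceptual proof of this theorem across all finite types is still regarded as somewhat miraculous.
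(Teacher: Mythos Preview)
The paper does not give its own proof of this theorem: it simply attributes the result to Birman--Ko--Lee and Bessis and moves on. Your outline is precisely the Bessis strategy that the paper is citing --- reduce $[1,\gamma]$ to the absolute-order interval $[1,c]_T$ in the Coxeter group via a length-preserving bijection governed by the Hurwitz action, then invoke the lattice property of noncrossing partitions (Biane in type $A$, Brady--Watt or case-by-case otherwise). So there is nothing to compare: you are sketching exactly the argument the paper defers to the literature, and you have correctly flagged Hurwitz transitivity on minimal reflection factorizations as the technical crux. One small inaccuracy: the dual atoms are conjugates $\beta\sigma_j\beta^{-1}$ with $\beta$ not necessarily positive (see the type $A$ formula $T_\alpha=\sigma_i\cdots\sigma_{i+k-1}\sigma_{i+k}\sigma_{i+k-1}^{-1}\cdots\sigma_i^{-1}$ in the paper), so ``positive lift'' should be read as ``lift lying in $[1,\gamma]$'' rather than ``conjugate by a positive braid''.
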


We will be very much interested in generators $T_\alpha$ in $[1,\gamma]$.

\begin{example}
  In type $A$ with the choice $\gamma=\sigma_1\cdots \sigma_n$, each root $\alpha=\alpha_i+\dots + \alpha_{i+k}$ gets assigned a unique generator:
  \[T_\alpha=\sigma_i\cdots \sigma_{i+k-1}\sigma_{i+k}\sigma_{i+k-1}^{-1}\cdots \sigma_{i}.\]
\end{example}

\begin{example}
  Here is the lattice in type $A_3$ with choice of $\gamma=\sigma_1\sigma_2\sigma_3$.

  \[
  \begin{tikzpicture}[anchorbase]
    \node (1) at (0,0) {\tiny $\id$};
    \node (13) at (-5,2) {\tiny $\sigma_1\sigma_2\sigma_1^{-1}$};
    \node (12) at (-3,2) {\tiny $\sigma_1$};
    \node (23) at (-1,2) {\tiny $\sigma_2$};
    \node (34) at (1,2) {\tiny $\sigma_3$};
    \node (14) at (3,2) {\tiny $\sigma_1\sigma_2\sigma_3\sigma_2^{-1}\sigma_1^{-1}$};
    \node (24) at (5,2) {\tiny $\sigma_2\sigma_3\sigma_2^{-1}$};
    \node (12x34) at (-5,4) {\tiny $\sigma_1\sigma_3$};
    \node (123) at (-3,4) {\tiny $\sigma_1\sigma_2$};
    \node (234) at (-1,4) {\tiny $\sigma_2\sigma_3$};
    \node (134) at (1,4) {\tiny $\sigma_2^{-1}\sigma_1\sigma_2\sigma_3$};
    \node (124) at (3,4) {\tiny $\sigma_1\sigma_2\sigma_3\sigma_2^{-1}$};
    \node (23x14) at (5,4) {\tiny $\sigma_2 \sigma_1\sigma_2\sigma_3\sigma_2^{-1}\sigma_1^{-1}$};
    \node (1234) at (0,6) {\tiny $\sigma_1\sigma_2\sigma_3$};
    \draw (1) -- (13);
    \draw (1) -- (12);
    \draw (1) -- (23);
    \draw (1) -- (34);
    \draw (1) -- (14);
    \draw (1) -- (24);
    \draw (12) -- (123);
    \draw (12) -- (12x34);
    \draw (12) -- (124);
    \draw (23) -- (123);
    \draw (23) -- (23x14);
    \draw (23) -- (234);
    \draw (34) -- (134);
    \draw (34) -- (12x34);
    \draw (34) -- (234);
    \draw (14) -- (124);
    \draw (14) -- (23x14);
    \draw (14) -- (234);
    \draw (13) -- (123);
    \draw (13) -- (134);
    \draw (24) -- (124);
    \draw (24) -- (234);
    \draw (123) -- (1234);
    \draw (234) -- (1234);
    \draw (134) -- (1234);
    \draw (124) -- (1234);
    \draw (12x34) -- (1234);
    \draw (23x14) -- (1234);
  \end{tikzpicture}
  \]
\end{example}

The elements in the interval $[1,\gamma]$ play a central role in the Garside structure for the whole Artin-Tits group. In particular, they form the letters of the alphabet in which one expresses the Garside normal form. We'll see those show up again later.

\subsection{Burau representation}

The Coxeter group $G_\Gamma$ has an action on a finite-dimensional representation, from which one can study the group properties: see~\cite{BjornerBrenti} for example. The underlying complex vector space is:
\[
V_1=\langle \alpha_1,\cdots, \alpha_n\; \text{for}\; i\in V(\Gamma) \rangle_\C
\]
equipped with a non-degenerate pairing as follows:
\[
\langle \alpha_i,\alpha_j\rangle =
\begin{cases}
  2 \; \text{if} \; i=j; \\
  0 \; \text{if} \; m_{ij}=2\;\text{($i$ and $j$ not adjacent)} \\
  -2\cos(\frac{\Pi}{m_{ij}})\; \text{if}\;m_{ij}\neq \infty \\
  -2\; \text{if} \; m_{ij}=\infty
\end{cases}
\]
Note that the $m_{ij}=2$ case is compatible with the definition for general $m_{ij}$.

Now $G_\Gamma$ acts on $V_1$ by reflections:
\[
s_i(\alpha_j)=\alpha_j-\langle \alpha_i,\alpha_j\rangle \alpha_i.
\]

Again, a key point is that this representation is faithful: see for example \cite[Theorem 4.2.7]{BjornerBrenti}.

Of course, since the projection from $B(\Gamma)$ to $G_\Gamma$ has a kernel, the induced action of the Artin-Tits groups on the same space won't be faithful. However, it is a natural strategy to try to q-deform it, especially from the perspective of Hecke algebras. The result will be the Burau representation, that we'll describe now.

Let us $q$-deform $V_1$ into $V_q$ a $\C(q)$-vector space generated by the roots $\alpha_1,\dots, \alpha_n$. Then the pairing also $q$-deforms:
\[
\langle \alpha_i,\alpha_j\rangle =\begin{cases}
1+q^2 \;\text{if}\;i=j \\
-2q\cos(\frac{\Pi}{m_{ij}})\; \text{if}\;i\neq j\;\text{and}\; m_{ij}<\infty \\
-2\;\text{if}\;m_{ij}=\infty
\end{cases}
\]

\begin{definition}
  The \emph{Burau representation} is an action of $\B(\Gamma)$ on $V_q$ defined on generators $\sigma_i\in \B(\Gamma)$ by:
  \[
    \sigma_i(\alpha_j)=\alpha_j-\langle \alpha_i,\alpha_j\rangle \alpha_i.
  \]
\end{definition}

\begin{remark}
  One can check that this action factors through the Hecke algebra, which is a quadratic quotient of the group ring of the braid group. In type $A$, this is also related to the following appearance of the Burau representation. Consider $U=\C(q)^2$ the 2-dimensional irreducible representation of $U_q(\mathfrak{sl}_2)$. Then Schur-Weyl duality asserts that there is an action of the braid group $\B(A_{n-1})$ on $U^{\otimes n}$, dual to the one of $U_q(\mathfrak{sl}_2)$. Decomposing this $n$-fold tensor into $\mathfrak{sl}_2$ weight spaces, one gets:
  \[
U^{\otimes n}=\bigoplus_{k=-n .. n\;\text{by} \; 2} W_n
\]
The $\B(A_{n-1})$ action respects the direct sum decomposition, and one can check that $W_{n-2}$ is the Burau representation.
\end{remark}

\begin{example}
  In type $A$, one can check that:
  \[
    \sigma_i(\alpha_j)=\begin{cases} -q^2\alpha_i \;\text{if}\; i=j, \\
      \alpha_i-q\alpha_j \;\text{if}\; |i-j|=1, \\
      \alpha_j\;\text{otherwise}.
      \end{cases}
  \]
\end{example}

As for faithfulness, the situation is not very good though.
\begin{theorem}[~\cite{MagnusPeluso, Moody, LongPaton, Bigelow_Burau}] \label{thm:BurauKernel}
  The Burau representation is faithful in type $A_2$, and not faithful in any group containing $\B(A_4)$ as a subgroup.
\end{theorem}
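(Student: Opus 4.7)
The statement naturally splits into two essentially independent parts, treated by different techniques. For the positive direction in type $A_2$, the plan is to work with the explicit reduced Burau representation, which for $\B(A_2)=\B_3$ gives $2\times 2$ matrices over $\C(q)$. I would first write these matrices down and verify the braid relation directly. Since the center of $\B(A_2)$ is generated by $\Delta^2=(\sigma_1\sigma_2)^3$, the argument reduces to two steps: show that $\Delta^2$ maps to a nontrivial scalar, so that the center injects; and then show that the induced map $\B(A_2)/Z \to PGL_2(\C(q))$ is injective. The latter can be attacked by a ping-pong argument, exhibiting disjoint attracting/repelling domains in $\mathbb{P}^1(\C(q))$ for suitable conjugates of the generators, or equivalently using the free-product decomposition $\B(A_2)/Z \cong \Z/2 * \Z/3$ together with a normal-form argument in the spirit of Magnus--Peluso.

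For the negative direction, the plan is to implement Bigelow's topological reinterpretation. First, I would identify the Burau representation of $\B(A_{n-1})$ with the action on $H_1(\tilde{D}_n; \Z)$ of the infinite cyclic cover of the $n$-punctured disk, with the deck transformation acting as multiplication by $q$. On this cover there is a natural $\Z[q,q^{-1}]$-valued intersection pairing $\langle \tilde\alpha, \tilde\beta \rangle$ between lifts of properly embedded arcs joining punctures. The crucial lemma is: if two simple arcs $\alpha, \beta$ in $D_n$ satisfy $\langle \tilde\alpha, \tilde\beta \rangle = 0$, then the half-twists $T_\alpha, T_\beta$ act by commuting matrices on $V_q$, so the commutator $[T_\alpha, T_\beta]$ lies in the kernel of Burau. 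Conversely, by a geometric-intersection argument inside the braid group (via its action on the fundamental group of the punctured disk), this commutator is nontrivial whenever $\alpha$ and $\beta$ have nonzero geometric intersection number.

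The main obstacle is then the explicit construction, for $n=5$, of a pair of simple arcs $(\alpha,\beta)$ that intersect topologically yet whose lifts pair to zero in the cover: this is the combinatorial heart of Bigelow's argument and requires a careful choice of arcs whose contributions to the Laurent-polynomial pairing cancel over $\Z[q,q^{-1}]$. Once such a pair is produced for $\B(A_4)=\B_5$, the resulting nontrivial kernel element persists in every overgroup containing $\B(A_4)$, since the Burau representation of the larger group restricts on the $\B(A_4)$-subgroup to its own Burau representation, completing the unfaithfulness half.
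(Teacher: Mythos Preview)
Your plan is sound and would lead to a correct proof, but it diverges from the paper's treatment in both halves.

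For the unfaithfulness part, you follow Bigelow's original topological packaging: the infinite cyclic cover, the $\Z[q,q^{-1}]$-valued intersection pairing, and the commutator of half-twists. The paper instead recasts the same construction inside the Khovanov--Seidel category: the two arcs become spherical objects $A_1,A_2$, geometric intersection becomes $A_2^\vee\otimes A_1\neq 0$, the vanishing Bigelow pairing becomes $\qdim(A_2^\vee\otimes A_1)=0$, and the statement ``the half-twists commute on $V_q$'' becomes ``the spherical twists $T_{A_1}T_{A_2}$ and $T_{A_2}T_{A_1}$ differ as functors but have the same class in $K_0$''. The underlying input (Bigelow's computer-found pair of arcs) is identical; the paper's phrasing buys a direct explanation of \emph{why} the categorified action stays faithful where Burau fails, namely that $\HOM$ sees what $\qdim$ forgets.

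For the $A_2$ faithfulness, your approach is the classical one (explicit $2\times 2$ matrices, the central element $\Delta^2$, and a ping-pong or normal-form argument via $\B(A_2)/Z\simeq \Z/2*\Z/3$), and this is perfectly valid. The paper deliberately takes a much heavier route (it jokes that it gives ``the most complicated proof ever''): it shows that in type $A_2$ every spherical object, when displayed as a double complex, occupies at most two columns, and that any cancellation in the Grothendieck group would require a ``knight move'' spanning at least three columns. Your argument is shorter and self-contained; the paper's argument has the pedagogical advantage of explaining structurally why the same reasoning breaks already in $A_3$, where spherical objects can spread over arbitrarily many columns and genuine cancellations do occur.
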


We will delay the (sketched) proof until Section~\ref{sec:sphTwAgain}. Faithfulness in type $A_2$ is classical (see the discussion in Birman's book~\cite{Birman_book}), but the most complicated proof ever can be found in Theorem~\ref{thm:3BurauFaithful}.

\begin{problem}
  What about $A_3$? $D_4$? $\tilde{A}_3$? $\tilde{A}_2$?
\end{problem}

In type $A_3$, the problem has been reduced by Birman~\cite{Birman_book} to showing that it is faithful on the free subgroup generated by the following two braids:
\[
\sigma_1\sigma_3^{-1} \quad \text{and} \quad (\sigma_1\sigma_2\sigma_1^{-1})(\sigma_3\sigma_2\sigma_3^{-1})
\]
In other words, do the following two matrices generate a free group?
\[
\begin{pmatrix}
  -q^2 & -q & 0 \\
  0 & 1 & 0 \\
  0 & -q^{-1} & -q^{-2}
\end{pmatrix}
\quad \text{and} \quad
\begin{pmatrix}
  1-q^2 & q^{-1} & 1 \\
  -q^{-1}+q^3 & -q^{-2} & 0 \\
  q^2 & q^{-3} & 0
\end{pmatrix}
\]

The Burau representation was originally defined by Burau~\cite{Burau} in the 30's. It has some ties with the Alexander polynomial of knots. The presentation I gave makes quite clear the fact that it is unitary (in a $q$-version), but this result was originally proved by Squier~\cite{Squier}.

The lack of faithfulness of the Burau representation was fixed in the early 2000's in two different and apparently unrelated manners. Khovanov and Seidel's categorical lift is one of them, and the main topic of these notes. The other version is a genuine finite-dimensional representation, the so-called LKB representation (LKB stands for Lawrence-Krammer-Bigelow). The definition in type $A$ comes from work of Lawrence in the early 90's~\cite{Lawrence}, and it has been shown to be faithful both by Bigelow~\cite{Bigelow_linearity} (using some intersection count in a cover of a configuration space of points) and Krammer~\cite{Krammer} (using Garside theory). This is the original proof that braid groups, and Artin-Tits groups in finite type, are linear.

In general, this leaves open the following question.
\begin{problem}
  Are Artin-Tits groups linear?
\end{problem}

Outside of finite type, Paris proved:
\begin{theorem}[\cite{Paris_monoid}]
  The LKB representation is a faithful representation of the positive monoid.
\end{theorem}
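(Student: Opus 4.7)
The plan is to prove faithfulness of the LKB representation on $\B(\Gamma)^+$ by exploiting the very rigid form of the matrix entries coming from positive generators, combined with Paris's earlier theorem that $\B(\Gamma)^+$ embeds into $\B(\Gamma)$ (so that positive elements are unambiguously defined and the monoid is cancellative).

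First I would unpack the action of a single generator $\sigma_i$ on the LKB module $V$. In Paris's extension of Krammer-Bigelow to arbitrary Coxeter type, $V$ has a basis indexed by pairs arising from the root poset of $\Gamma$, with scalars in $\mathbb{Z}[q^{\pm 1}, t^{\pm 1}]$. The key observation is that each $\sigma_i$ acts by a matrix whose nonzero entries are monomials in $q,t$ with nonnegative exponents, and the matrix is unitriangular with respect to a partial order on the basis induced by the root ordering. In particular, no cancellation occurs between basis vectors when one composes such matrices.

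Second I would iterate: the product of finitely many such ``positive'' unitriangular matrices is again of the same shape, and the leading diagonal and subdiagonal entries of $\rho(\beta)$ faithfully record the total length $|\beta|$ (as a degree in the $q,t$ grading) and a distinguished first letter $\sigma_{i_1}$ that can be left-divided off in $\B(\Gamma)^+$. Concretely, the unique basis vector whose image carries a monomial of maximum weight singles out $\sigma_{i_1}$. This is where the positive-monoid hypothesis is essential: in the full group one has $\sigma_i^{-1}$ acting with entries of negative exponent, which would spoil the leading-term analysis.

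Third I would induct on the length $|\beta|$. Suppose $\rho(\beta)=\rho(\beta')$ with $\beta,\beta'\in \B(\Gamma)^+$. The previous step forces $\beta$ and $\beta'$ to have the same length and the same extractable first letter $\sigma_{i_1}$, so $\beta=\sigma_{i_1}\beta_0$ and $\beta'=\sigma_{i_1}\beta'_0$ in $\B(\Gamma)^+$. Left-multiplying by $\rho(\sigma_{i_1})^{-1}$ in the group $GL(V\otimes \C(q,t))$ (legitimate since $\sigma_{i_1}$ is invertible in $\B(\Gamma)$) yields $\rho(\beta_0)=\rho(\beta'_0)$, and cancellativity in $\B(\Gamma)^+$ (from Paris's embedding) lets me apply the induction hypothesis to conclude $\beta_0=\beta'_0$, hence $\beta=\beta'$.

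The hard part is setting up the partial order on the basis so that the unitriangularity with no-cancellation of leading terms holds uniformly across all Coxeter types: in non-spherical types the root poset is infinite, there is no Garside element to anchor the analysis, and one must check that the explicit LKB formulas preserve the chosen filtration under composition. Once that combinatorial and representation-theoretic input is in place, the inductive extraction of letters is routine.
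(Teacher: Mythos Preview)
The paper does not prove this theorem: it is quoted from \cite{Paris_monoid} without proof and then immediately used to deduce the corollary that $\B^+(\Gamma)$ embeds in $\B(\Gamma)$. So there is no argument in the paper to compare your proposal against.

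That said, your proposal contains a genuine circularity. You invoke ``Paris's earlier theorem that $\B^+(\Gamma)$ embeds into $\B(\Gamma)$'' to obtain cancellativity, but in both the paper and in Paris's own work that embedding is the \emph{consequence} of the statement you are trying to prove, not an input to it. The logical direction is: faithfulness of LKB on the monoid $\Rightarrow$ the monoid injects in the group. If you want cancellativity of $\B^+(\Gamma)$ as an ingredient, you must source it independently (it is known for all Artin monoids via the reduction lemma, going back to Brieskorn--Saito and Michel), not from the embedding.

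Beyond the circularity, the mathematical core of your sketch---unitriangularity with monomial entries and extraction of a first letter from a leading term---is precisely what you yourself label ``the hard part'' and do not carry out. The LKB matrices for $\sigma_i$ involve genuine sums of monomials, not single monomials, so ``no cancellation under composition'' is not automatic and is the entire content of the theorem. Paris's actual argument is more subtle than a single partial order on basis vectors: it tracks how positive words move a carefully chosen cone in the representation space, and the separation of distinct positive words happens there. Your outline is in the right spirit (Krammer's spherical-type proof does have a leading-term flavour), but as written it assumes the delicate combinatorial control rather than establishing it.
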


\begin{corollary}
  The monoid embeds in the braid group.
\end{corollary}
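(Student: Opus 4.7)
The plan is essentially a formal deduction from the preceding theorem, so the main task is to set up the diagram correctly. By construction, the monoid $\B(\Gamma)^+$ is presented by the same generators $\sigma_i$ and the same braid relations as $\B(\Gamma)$; the only difference is that inverses $\sigma_i^{-1}$ are not adjoined. Consequently there is a canonical monoid homomorphism
\[
\iota \colon \B(\Gamma)^+ \longrightarrow \B(\Gamma), \qquad \sigma_i \longmapsto \sigma_i,
\]
obtained by sending each generator of the monoid to the same-named generator of the group. The corollary is the assertion that $\iota$ is injective.

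Next I would compose $\iota$ with the LKB representation $\rho \colon \B(\Gamma) \to GL(V)$, which is defined on the whole Artin--Tits group. This yields a monoid homomorphism
\[
\rho \circ \iota \colon \B(\Gamma)^+ \longrightarrow GL(V),
\]
and this is precisely the restriction of LKB to positive words. Paris's theorem (the statement just above) says that $\rho \circ \iota$ is faithful, i.e.\ injective. Since a composition can be injective only if the first map is injective, we conclude that $\iota$ itself is injective: indeed, if $\iota(\beta)=\iota(\beta')$ for $\beta,\beta'\in \B(\Gamma)^+$, then $\rho\circ\iota(\beta)=\rho\circ\iota(\beta')$, forcing $\beta=\beta'$ by faithfulness on the monoid. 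Therefore $\B(\Gamma)^+$ embeds in $\B(\Gamma)$.

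The deduction itself is a one-line diagram chase; the ``hard part'' is entirely hidden inside Paris's theorem. The only subtlety worth flagging is a point of interpretation: one must read Paris's statement as \emph{faithfulness of a monoid map out of $\B(\Gamma)^+$}, not merely as faithfulness of some representation defined on the group but evaluated on monoid elements. These two statements are equivalent precisely \emph{because} $\iota$ is injective, so one has to be careful that the cited theorem is proved directly for $\B(\Gamma)^+$ (via an action, say on a suitable module constructed from the monoid presentation) rather than by first using the embedding we are trying to prove. Granted this, the corollary follows immediately.
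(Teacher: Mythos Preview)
Your argument is correct and coincides with the paper's proof: both factor the faithful monoid representation through the canonical map $\iota\colon \B(\Gamma)^+\to\B(\Gamma)$ and deduce injectivity of $\iota$ from injectivity of the composite. Your remark flagging the potential circularity (that Paris's faithfulness must be established directly on the monoid, not via the embedding one is trying to prove) is a worthwhile clarification that the paper leaves implicit.
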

\begin{proof}
  We have: $\rho: \B^+(\Gamma)\rightarrow \End(H)$ faithful as a monoid map. It induces a map from the group of fractions, and one has the following commutative diagram:
  \[
    \begin{tikzpicture}[anchorbase]
      \node (TL) at (0,0) {$\B^+(\Gamma)$};
      \node (TR) at (2,0) {$\B(\Gamma)$};
      \node (BR) at (2,-1.5) {$\End(H)$};
      \draw [->] (TL) -- (TR);
      \draw [->] (TR) -- (BR);
      \draw [right hook->] (TL) -- (BR);
    \end{tikzpicture}
  \]
  This implies that the horizontal arrow needs to be injective.
\end{proof}

In type $A$, one can show that the LKB representation is a $t$-deformation of the second symmetric power of the Burau representation. This is very special to type $A$, where the number of positive roots is $\frac{n(n-1)}{2}$ if there are $n$ simple roots. For more material on the LKB representations from a representation theoretic perspective, we refer to~\cite{Martel,LTV_LKB} and references therein.

\begin{problem}
  What is the link between Burau and LKB in general?
\end{problem}

For considerations from geometric group theory as well as categorification, it would also be quite valuable to know that the LKB representations are unitary -- and for which quadratic form. To the best of my knowledge, this is an open question.
\begin{problem}
  Is the LKB representation unitary in general?
\end{problem}

\section{Khovanov-Seidel 2-representation}

We now aim at categorifying the Burau representation, following Khovanov and Seidel's work~\cite{KhS}. As mentioned earlier, they proved that their 2-representation is faithful around the same time that Bigelow and Krammer proved the faithfulness of the LKB representation: braid groups were proven to be linear and 2-linear at the same time.

Recall that the Burau representations was built from the $\C(q)$ vector space $V_q(\Gamma)$, with a pairing and generators of the Artin-Tits groups acting by reflections. For simplicity, we'll restrict to $\Gamma$ simply-laced. More general definitions can be found in~\cite{Licata_free} for the free group and~\cite{HengNge} in type $B$.

What we are looking for is a category $\calC_\Gamma$ with Grothendieck group isomorphic to $V_q(\Gamma)$, and an assignment of an autoequivalence $KhS(g)\in \Aut(\calC)$ for each element $g\in G$.

Here is a rough sketch of what we will build:
\begin{itemize}
\item $\calC_\Gamma=\Kom_h(A_\Gamma-pmod)$ for some algebra $A_\Gamma$;
\item $\alpha_i \;\leftrightarrow \; P_i$ indecomposable projective;
\item $\sigma_i \; \leftrightarrow \; \Sigma_i$ complex of bimodules;
\item $\langle -,-\rangle \; \leftrightarrow \qdim(\HOM(-,-))$.
\end{itemize}

Note that what we are going to define is a weak action: no claim is made about the homotopies $\Sigma_1\Sigma_2\Sigma_1\simeq \Sigma_2\Sigma_1\Sigma_2$. 

It should also be noted that $\calC_\Gamma$ is a $2$-Calabi-Yau category, which gives a conceptual framework to several of the objects I'm going to define later. Since I know very little about this conceptual framework, I'll just ignore it.

Before we start, let's make a last comment about the pairing on the Burau space, that's preserved by this action. This pairing preservation is a feature expected if there exists a categorification by projectives. Assume that $\calC$ is a category of projective modules acted on by a group $G$. The Grothendieck group $K_0(\calC)$ will have a pairing (or $q$-pairing if one uses graded dimension):
\[
  \langle [A],[B] \rangle= \dim(\Hom_\calC(A,B))
\]
and this pairing will be naturally preserved by the action, since elements $g\in G$ act by endofunctors of $\calC$:
\[\Hom(gA,gB)\simeq \Hom(A,B)
\]

\subsection{Zig-zag algebra and projective modules}

Recall that we have restricted ourselves to the case of $\Gamma$ a simply laced diagram with labels $2$ or $3$ only.

Let us restart from the Dynkin diagram $\Gamma$. One can associate to it its oriented double quiver $\vec{\Gamma}$ by doubling all edges and giving each copy a different orientation.

\[
  \begin{tikzpicture}[anchorbase]
    \node (1)  at (1,0) {$\bullet$};
    \node (2) at (2,0) {$\bullet$};
    \node (3) at (3,1) {$\bullet$};
    \node (4) at (3,-1) {$\bullet$};
    \node [above] at (1) {$1$};
    \node [above] at (2) {$2$};
    \node [above] at (3) {$3$};
    \node [above] at (4) {$4$};
    \begin{scope}[transform canvas={yshift=1}]
      \draw [->] (1) -- (2);
    \end{scope}
    \begin{scope}[transform canvas={yshift=-1}]
      \draw [<-] (1) -- (2);
    \end{scope}
    \begin{scope}[transform canvas={yshift=-1,xshift=1}]
      \draw [->] (2) -- (3);
    \end{scope}
    \begin{scope}[transform canvas={yshift=1,xshift=-1}]
      \draw [<-] (2) -- (3);
    \end{scope}
    \begin{scope}[transform canvas={yshift=-1,xshift=-1}]
      \draw [->] (2) -- (4);
    \end{scope};
    \begin{scope}[transform canvas={yshift=1,xshift=1}]
      \draw [<-] (2) -- (4);
    \end{scope};
  \end{tikzpicture}
\]

We then define the zig-zag algebra:
\[
  A_{\Gamma}=
  \frac{
    \mathrm{Path}(\vec{\Gamma})
    }{\begin{tikzpicture}[anchorbase, scale=.5] \node (1) at (1,0) {$\bullet$}; \node (2) at (2,0) {$\bullet$}; \node (3) at (3,0) {$\bullet$}; \draw [opacity=.5] (1.center) -- (2.center) -- (3.center); \draw [->] (1.north) to [out=30,in=150] (2.north) to [out=30,in=150] (3.north); \end{tikzpicture} =0\;,\quad \begin{tikzpicture}[anchorbase] \node (1) at (1,0) {$\bullet$}; \node (2) at (2,0) {$\bullet$}; \node (3) at (3,0) {$\bullet$}; \draw [opacity=.5] (1.center) -- (2.center) -- (3.center); \draw [->] (2.center) to [out=120,in=60] (1.east) to [out=-60,in=-120] (2.west); \end{tikzpicture}=\begin{tikzpicture}[anchorbase] \node (1) at (1,0) {$\bullet$}; \node (2) at (2,0) {$\bullet$}; \node (3) at (3,0) {$\bullet$}; \draw [opacity=.5] (1.center) -- (2.center) -- (3.center); \draw [->] (2.center) to [out=60,in=120] (3.west) to [out=-120,in=-60] (2.east); \end{tikzpicture}}
\]

The path algebra, as a $\C$-module, is generated by all paths in the quivers. Paths are simply sequences of arrows so that the target of each arrow equals the source of the next one.

\begin{example}
  \[
  \begin{tikzpicture}[anchorbase,scale=.8]
    \node (1)  at (1,0) {$\bullet$};
    \node (2) at (2,0) {$\bullet$};
    \node (3) at (3,1) {$\bullet$};
    \node (4) at (3,-1) {$\bullet$};
    \node [above] at (1) {$1$};
    \node [below] at (2) {$2$};
    \node [above] at (3) {$3$};
    \node [above] at (4) {$4$};
    \draw [->] (1) to [bend left] (2.north) to [bend left] (3.south west) to [bend left] (2);
  \end{tikzpicture}
  \quad \leftrightarrow \quad (1|2)(2|3)(3|2)
  \]
\end{example}

Multiplication (composition) is given by concatenation of path. When concatenation is not possible, then the composition yields zero.

\begin{example}
\[
\left(  \begin{tikzpicture}[anchorbase,scale=.8]
    \node (1)  at (1,0) {$\bullet$};
    \node (2) at (2,0) {$\bullet$};
    \node (3) at (3,1) {$\bullet$};
    \node (4) at (3,-1) {$\bullet$};
    \node [above] at (1) {$1$};
    \node [below] at (2) {$2$};
    \node [above] at (3) {$3$};
    \node [above] at (4) {$4$};
    \draw [->] (1) to [bend left] (2);
\end{tikzpicture}
\right)
\;\times \;
\left(
  \begin{tikzpicture}[anchorbase,scale=.8]
    \node (1)  at (1,0) {$\bullet$};
    \node (2) at (2,0) {$\bullet$};
    \node (3) at (3,1) {$\bullet$};
    \node (4) at (3,-1) {$\bullet$};
    \node [above] at (1) {$1$};
    \node [below] at (2) {$2$};
    \node [above] at (3) {$3$};
    \node [above] at (4) {$4$};
    \draw [->] (2) to [bend left] (3);
  \end{tikzpicture}
  \right)
  \;
  =
  \;
  \left(
  \begin{tikzpicture}[anchorbase,scale=.8]
    \node (1)  at (1,0) {$\bullet$};
    \node (2) at (2,0) {$\bullet$};
    \node (3) at (3,1) {$\bullet$};
    \node (4) at (3,-1) {$\bullet$};
    \node [above] at (1) {$1$};
    \node [below] at (2) {$2$};
    \node [above] at (3) {$3$};
    \node [above] at (4) {$4$};
    \draw [->] (1) to [bend left] (2.north) to [bend left] (3);
  \end{tikzpicture}
  \right)
  \]
  
\[
\left(  \begin{tikzpicture}[anchorbase,scale=.8]
    \node (1)  at (1,0) {$\bullet$};
    \node (2) at (2,0) {$\bullet$};
    \node (3) at (3,1) {$\bullet$};
    \node (4) at (3,-1) {$\bullet$};
    \node [above] at (1) {$1$};
    \node [below] at (2) {$2$};
    \node [above] at (3) {$3$};
    \node [above] at (4) {$4$};
    \draw [->] (1) to [bend left] (2);
\end{tikzpicture}
\right)
\;\times \;
\left(
  \begin{tikzpicture}[anchorbase,scale=.8]
    \node (1)  at (1,0) {$\bullet$};
    \node (2) at (2,0) {$\bullet$};
    \node (3) at (3,1) {$\bullet$};
    \node (4) at (3,-1) {$\bullet$};
    \node [above] at (1) {$1$};
    \node [below] at (2) {$2$};
    \node [above] at (3) {$3$};
    \node [above] at (4) {$4$};
    \draw [->] (4) to [bend right] (2);
  \end{tikzpicture}
  \right)
 =
0 \]  

\end{example}

Primitive idempotents consist simply in staying at vertex $(i)$, and are denoted $e_i$. One gets:
\[
1=\sum_i e_i
\]

It is easily checked that there are no paths of length more than (or equal to) $3$.

Originally, Khovanov and Seidel based their algebra in the following sense: one of the extremal nodes of the Dynkin diagram has as extra relation that its loop is zero.

By adding a node, any of the zig-zag algebras defined here embeds in an algebra based at a node. Two reasons arise to like those better: they contain exceptional objects, which might be useful to create a bridge with algebraic geometry, and they are closer to the quadratic duals of the preprojective algebra of the corresponding quiver: see Appendix~\ref{appendix:quadratic_dual} for details.

A natural grading on this algebra is given by the path length: all relations preserve the grading. Degree shifts for the path length degree will be denoted by angle brackets $\langle - \rangle$.

Other gradings can be defined, from additional data. An example we will care about comes from the choice of an orientation of the diagram $\Gamma$. One can decide that a generating path has degree zero if it has the favorite orientation, and degree one if it goes against the chosen orientation. Again, it can be checked that all relations are homogeneous. We'll refer to this one as the orientation grading, and use curly brackets $\{-\}$ for shifts in this degree.

\begin{example}
  In type $A$, our main example will be for the following choice of orientation:
  \[
  \begin{tikzpicture}[anchorbase]
    \node (1) at (1,0) {$\bullet$};
    \node [above] at (1) {\small $1$};
    \node (2) at (2,0) {$\bullet$};
    \node [above] at (2) {\small $2$};
    \node (3) at (3,0) {$\bullet$};
    \node [above] at (3) {\small $3$};
    \node (4) at (4,0) {$\bullet$};
    \node [above] at (4) {\small $4$};
    \node (dots) at (4.5,0) {$\dots$};
    \draw [->] (1) -- (2);
    \draw [->] (2) -- (3);
    \draw [->] (3) -- (4);
  \end{tikzpicture}
  \]

  Then:
  \[
  \deg\left(
  \begin{tikzpicture}[anchorbase]
    \node (i) at (1,0) {$\bullet$};
    \node [below] at (i) {\small $i$};
    \node (i+1) at (2,0) {$\bullet$};
    \node [below] at (i+1) {\small $i+1$};
    \draw [->] (i) to [bend left] (i+1);
  \end{tikzpicture}
  \right)
  =0
  ,\quad
  \deg\left(
  \begin{tikzpicture}[anchorbase]
    \node (i) at (1,0) {$\bullet$};
    \node [below] at (i) {\small $i$};
    \node (i+1) at (2,0) {$\bullet$};
    \node [below] at (i+1) {\small $i+1$};
    \draw [->] (i+1) to [bend right] (i);
  \end{tikzpicture}
  \right)
  =1 
  \]
\end{example}

\subsection{Projectives}

Recall that we have a complete family of primitive idempotents:
\[1=\sum_{i\in V(\Gamma)} e_i\]
Thus, one gets:
\[
A_\Gamma=\bigoplus_i A_\Gamma e_i=\bigoplus_i e_i A_\Gamma
\]
Let us denote:
\[
P_i=A_{\Gamma}e_i,\quad Q_i=e_i A_\Gamma.
\]
The $P_i$'s are the graded indecomposable left projective modules over $A_\Gamma$, and the $Q_i$'s are the graded indecomposable right projective modules. In terms of paths, $P_i$ consists in those paths that end at vertex $i$, while $Q_i$ is made of path starting at vertex $i$.

\begin{example}
  In type $A_3$ (or greater type $A$),
  \[
  P_2=\langle e_2, (3|2), (1|2), x_2:=(2|1|2)=(2|3|2)\rangle
  \]
\end{example}

Let us finally define the main player of these lecture notes:
\[
\calC_\Gamma=\Kom_h(A_\Gamma-lpmod)
\]
In words, $\calC_\Gamma$ is the homotopy category of left projective $A_\Gamma$ modules.

$\Hom$-spaces between $P_i$'s are very easy to understand:
\begin{lemma}
  \[
  \HOM_{\calC_{\Gamma}}(P_i,P_j)=
  \begin{cases}
    \C\oplus \C\langle 2\rangle\{1\} \; \text{if}\; i=j \\
    \C\langle 1\rangle\{1\} \; \text{if}\; i\leftarrow j \\
    \C\langle 1\rangle \; \text{if}\; i\rightarrow j \\
    \{0\} \;\text{otherwise}.
  \end{cases}
  \]
\end{lemma}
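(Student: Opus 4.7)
The plan is to reduce the lemma to an explicit enumeration of paths in the zig-zag algebra.

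First I would observe that since $P_i$ and $P_j$ are complexes concentrated in a single cohomological degree, morphisms in the homotopy category $\Kom_h$ coincide with bigraded morphisms in the module category: $\HOM_{\calC_\Gamma}(P_i,P_j)=\Hom_{A_\Gamma}(A_\Gamma e_i,A_\Gamma e_j)$. The standard identification sending $\phi\mapsto \phi(e_i)$ then gives an isomorphism of bigraded vector spaces
\[
\Hom_{A_\Gamma}(A_\Gamma e_i,A_\Gamma e_j)\;\cong\; e_iA_\Gamma e_j,
\]
preserving both the length grading $\langle -\rangle$ and the orientation grading $\{-\}$ since they are just restrictions of the grading on $A_\Gamma$. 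This reduces everything to computing $e_iA_\Gamma e_j$, which is spanned by paths from $i$ to $j$ in $\vec{\Gamma}$ modulo the two zig-zag relations.

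Next I would do the case analysis, using that $A_\Gamma$ has no nonzero paths of length $\geq 3$:
\begin{itemize}
\item If $i=j$, the only paths from $i$ to itself are $e_i$ (bidegree $(0,0)$) and the loops $(i|k|i)$ for $k$ a neighbor of $i$. The second zig-zag relation collapses all such loops to a single element $x_i$ of length $2$; it traverses one edge with orientation and one against, so its orientation degree is $1$. This yields $\C\oplus\C\langle 2\rangle\{1\}$.
\item If $i$ and $j$ are adjacent, the unique length-$1$ path is $(i|j)$, with length grading $1$ and orientation grading $0$ or $1$ according to whether the edge points $i\to j$ (case $i\to j$) or $j\to i$ (case $i\leftarrow j$). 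A length-$2$ path $i\to k\to j$ would require a common neighbor $k$, impossible in the tree $\Gamma$ without creating a cycle. This gives $\C\langle 1\rangle$ or $\C\langle 1\rangle\{1\}$ as claimed.
\item If $i\neq j$ are non-adjacent, there are no paths of length $0$ or $1$. A length-$2$ path $i\to k\to j$ exists only if $k$ is a common neighbor, in which case $i$-$k$-$j$ is a $3$-vertex subpath of $\Gamma$, and the first zig-zag relation kills $(i|k|j)$. Hence the Hom space is zero.
\end{itemize}

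The main obstacle, if one can call it that, is the non-adjacent case: one must notice that the common-neighbor configuration is exactly the hypothesis of the vanishing relation $(i|k|j)=0$. Everything else is a straightforward bookkeeping exercise matching the two gradings on the paths against the claimed shifts.
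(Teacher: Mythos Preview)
Your argument is essentially the same as the paper's: reduce to $e_iA_\Gamma e_j$ via $\phi\mapsto\phi(e_i)$ and enumerate paths of length $\leq 2$. The paper only spells out the case $i=j$ and declares the rest similar; you have filled in the remaining cases, which is fine.

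There is one small slip. In the adjacent case you dispose of length-$2$ paths by asserting that $\Gamma$ is a tree, but the paper does not assume this: simply-laced with labels $2$ and $3$ allows cycles, and in particular affine type $\tilde{A}_n$ (which the paper explicitly treats) is a cycle, so in $\tilde{A}_2$ two adjacent vertices do share a common neighbor. The fix is immediate and you already have it in hand: any such length-$2$ path $(i|k|j)$ with $i,k,j$ pairwise distinct is killed by the first zig-zag relation, exactly as you argue in the non-adjacent case. Replace the tree remark by this and the proof goes through unchanged.
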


\begin{proof}
  Let's consider the case when $i=j$. $P_i$ being a free module, a morphism $\phi$ is determined by the image of $e_i$. Since $e_ie_i=e_i$, $e_i\phi(e_i)=e_i$, which means that:
  \[\phi(e_i)=\sum a_j e_iy_j e_i.
  \]
  The $y_j$'s are path from the node $i$ to itself, that is, linear combinations of the idempotent $e_i$ and the loop at $i$. The first one is of degree zero for both gradings and the second one of degree two in path length and one in orientation grading.

  The other two cases follow from similar arguments.
\end{proof}

The following lemma is closely related to the previous one, and will be useful when we consider tensor product of bimodules.

\begin{lemma}
  \[
  Q_j\otimes P_i\simeq
  \begin{cases}
    \C\oplus \C\langle 2\rangle\{1\} \;\text{if}\; i=j \\
    \C\langle 1\rangle\{1\} \; \text{if}\; i \leftarrow j\\
    \C\langle 1\rangle \; \text{if}\; i \rightarrow j\\
    \{0\} \;\text{otherwise}.
  \end{cases}
  \]
\end{lemma}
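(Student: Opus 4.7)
The plan is to reduce the statement to a bookkeeping exercise about paths in a single ``matrix block'' of $A_\Gamma$, by recognizing that the tensor product collapses via the idempotent structure.

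First, since $Q_j = e_j A_\Gamma$ as a right module and $P_i = A_\Gamma e_i$ as a left module, the multiplication map
\[
Q_j \otimes_{A_\Gamma} P_i = e_j A_\Gamma \otimes_{A_\Gamma} A_\Gamma e_i \longrightarrow e_j A_\Gamma e_i, \qquad e_j a \otimes b e_i \mapsto e_j a b e_i,
\]
is an isomorphism of bigraded vector spaces, with respect to both the path-length grading $\langle - \rangle$ and the orientation grading $\{-\}$, because both gradings are inherited directly from $A_\Gamma$. This step is the standard ``idempotent-collapse'' for tensor products of projectives built from primitive idempotents, and uses nothing beyond associativity of the tensor product together with $a \otimes b = a e_j \otimes e_j b$ for $a \in e_j A_\Gamma$ and $b \in A_\Gamma e_j$.

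Second, I enumerate $e_j A_\Gamma e_i$ as the space of paths in $\vec{\Gamma}$ starting at $j$ and ending at $i$, modulo the zig-zag relations. The nonzero classes come in three flavors only: the idempotents $e_k$ (length $0$), the single arrows between adjacent nodes (length $1$), and the loops $x_k$ at a single vertex (length $2$, well defined independently of the neighbor travelled to, by the second defining relation). All straight-through length-$2$ paths $(a|b|c)$ with $a \neq c$ are killed by the first relation, and every path of length $\geq 3$ vanishes automatically.

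Third, I handle the cases. If $i = j$, the only surviving paths from $j$ to $i$ are $e_i$, contributing $\C$ in bidegree $(0,0)$, and $x_i$, contributing $\C\langle 2\rangle\{1\}$: the loop has length $2$, and one of its two arrows runs with the chosen orientation of $\Gamma$ while the other runs against, giving orientation degree $1$. If $i$ and $j$ are adjacent in $\Gamma$, there is exactly one length-$1$ path from $j$ to $i$ in $\vec{\Gamma}$, of path-length degree $1$ and of orientation degree $0$ or $1$ depending on whether the corresponding edge of $\Gamma$ agrees with or opposes the chosen orientation, yielding $\C\langle 1 \rangle$ in one direction and $\C\langle 1 \rangle \{1\}$ in the other. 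If $i$ and $j$ are distinct and non-adjacent, no path reaches from $j$ to $i$ at all and the space is $\{0\}$.

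The whole argument mirrors, with $i$ and $j$ swapped, the enumeration already carried out for $\HOM(P_i, P_j)$ in the previous lemma. I do not expect any genuine obstacle: the only conceptual point is the first-step identification, and the case analysis is immediate once the zig-zag relations have cut the path algebra down to its three surviving flavors.
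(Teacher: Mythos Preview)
Your proposal is correct and is exactly the paper's approach: the paper's entire proof is the sentence ``This is a simple path count,'' and you have written out precisely that count, identifying $Q_j\otimes_{A_\Gamma} P_i$ with $e_jA_\Gamma e_i$ and enumerating the surviving paths in each case. There is nothing to add.
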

\begin{proof}
  This is a simple path count.
\end{proof}

We can also phrase this all in terms of adjunctions.

\begin{lemma}
  The functor given by tensoring on the right with $Q_i\langle -2\rangle\{-1\}$ is right adjoint to tensoring with $P_i$, and tensoring with $Q_i$ is left adjoint to it.
\end{lemma}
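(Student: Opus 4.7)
The plan is to derive both adjunctions from tensor-Hom, with the shift in one of them produced by the graded Frobenius structure of the zig-zag algebra $A_\Gamma$. For the adjunction without a shift, the previous lemma already gave $\HOM_{A_\Gamma}(P_i, M) \cong e_i M \cong Q_i \otimes_{A_\Gamma} M$ with no grading shift, so the standard tensor-Hom isomorphism
\[
\HOM_{A_\Gamma}(P_i \otimes_\C V, M) \cong \HOM_\C(V, \HOM_{A_\Gamma}(P_i, M)) \cong \HOM_\C(V, Q_i \otimes_{A_\Gamma} M)
\]
exhibits tensoring with $Q_i$ as one adjoint of tensoring with $P_i$.

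For the shifted adjoint I would invoke the graded Frobenius structure of $A_\Gamma$: the trace form $\epsilon : A_\Gamma \to \C$ picking out the coefficients of the top-degree loop elements $x_j$ has bidegree $(-2,-1)$, and nondegeneracy of the associated pairing yields an isomorphism of graded right $A_\Gamma$-modules
\[
P_i^\vee \cong Q_i \langle -2\rangle\{-1\}, \qquad \text{equivalently}\qquad P_i \cong Q_i^\vee \langle 2\rangle\{1\}.
\]
Substituting the latter into $\HOM_{A_\Gamma}(M, P_i \otimes_\C V)$ and using $Q_i^\vee \otimes_\C V \cong \HOM_\C(Q_i, V)$ for finite-dimensional $V$, together with the usual hom-tensor adjunction, produces
\[
\HOM_{A_\Gamma}(M, P_i \otimes_\C V) \cong \HOM_\C(Q_i \otimes_{A_\Gamma} M, V)\langle 2\rangle\{1\} \cong \HOM_\C\!\bigl((Q_i \langle -2\rangle\{-1\}) \otimes_{A_\Gamma} M,\, V\bigr),
\]
identifying $Q_i \langle -2\rangle\{-1\}$ as the other adjoint with the predicted shift.

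The main obstacle is bookkeeping of the bidegree shifts through the Frobenius identification: one must check both the direction and magnitude of the shift $\langle -2\rangle\{-1\}$ and verify which resulting isomorphism corresponds to the left versus right adjoint. An alternative cleaner route that sidesteps most of this is to use that $\calC_\Gamma$ is graded $2$-Calabi-Yau with Serre twist $\langle 2\rangle\{1\}$, forcing the left and right adjoints of any endofunctor to differ exactly by this shift; combined with the unshifted identification above, this yields the lemma immediately.
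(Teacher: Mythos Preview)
The paper gives no proof of this lemma, so there is nothing to compare your argument against. Your approach---tensor--Hom for one adjoint and the graded Frobenius structure of $A_\Gamma$ (equivalently the $2$-Calabi--Yau property) for the other---is the standard one and is correct in substance.

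One point is worth making explicit, since you already flag it as the ``main obstacle'': your first displayed isomorphism
\[
\HOM_{A_\Gamma}(P_i\otimes_\C V,\,M)\;\cong\;\HOM_\C\bigl(V,\,Q_i\otimes_{A_\Gamma}M\bigr)
\]
says precisely that $(P_i\otimes_\C-)\dashv(Q_i\otimes_{A_\Gamma}-)$, i.e.\ that the unshifted $Q_i$ is the \emph{right} adjoint; your Frobenius computation then makes $Q_i\langle-2\rangle\{-1\}$ the \emph{left} adjoint. This is the opposite of the labelling in the lemma as printed. Your assignment is the one consistent with the definitions of $\Sigma_i$ and $\Sigma_i^{-1}$ immediately following in the paper: the multiplication $P_i\otimes Q_i\to A_\Gamma$ is the counit of $(P_i\otimes-)\dashv(Q_i\otimes-)$, while $A_\Gamma\to P_i\otimes Q_i\langle-2\rangle\{-1\}$ is the unit of $(Q_i\langle-2\rangle\{-1\}\otimes-)\dashv(P_i\otimes-)$. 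So either the lemma has left and right interchanged, or it is silently phrased for right modules (reading ``tensoring with $P_i$'' as $-\otimes_A P_i$), in which case the labels come out as stated. Since $\calC_\Gamma$ is built from left modules, your reading is the natural one; in any case your argument proves both adjunctions with the correct shifts, and that is what matters.
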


\subsection{Complexes}

Let us first notice that we have reached the first goal of lifting up the space $V_q$ together with its pairing.
\begin{lemma}
  \[
K_0(\calC_{\Gamma}) \simeq V_q(\Gamma)
\]
with:
\[
\qdim(\HOM(P_i,P_j))=\langle \alpha_i,\alpha_j\rangle_q.
\]
  \end{lemma}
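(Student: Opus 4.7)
My plan is to identify the Grothendieck group on basis elements and then match the $q$-pairing case by case via a direct cross-reference with the preceding $\HOM$ lemma. First, for the homotopy category $\calC_\Gamma = \Kom_h(A_\Gamma\text{-}lpmod)$, the Euler characteristic $[C^\bullet] = \sum_i (-1)^i [C^i]$ identifies $K_0(\calC_\Gamma)$ with the split Grothendieck group of the additive category of finitely generated graded projective $A_\Gamma$-modules. Since $A_\Gamma$ is finite dimensional and positively graded over $\C$ with complete set of primitive orthogonal idempotents $\{e_i\}_{i \in V(\Gamma)}$, the indecomposable graded projectives are (up to shifts) the modules $P_i = A_\Gamma e_i$, and a graded Krull--Schmidt argument gives that $K_0(\calC_\Gamma)$ is free over the ring generated by the grading shifts with basis $\{[P_i]\}_{i \in V(\Gamma)}$. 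Extending scalars and declaring $[P_i] \mapsto \alpha_i$ yields a $\C(q)$-linear isomorphism $K_0(\calC_\Gamma) \otimes \C(q) \xrightarrow{\sim} V_q(\Gamma)$ between two free modules of the same rank $|V(\Gamma)|$.

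It then remains to check $\qdim(\HOM(P_i,P_j)) = \langle \alpha_i,\alpha_j\rangle_q$ on basis elements, which is a direct plug-in from the previous lemma. In the diagonal case, $\HOM(P_i,P_i) = \C \oplus \C\langle 2\rangle\{1\}$ contributes $1 + q^2$, matching $\langle \alpha_i, \alpha_i\rangle_q$. When $i$ and $j$ are adjacent in $\Gamma$, $\HOM(P_i,P_j)$ is one-dimensional with a single path-length shift $\langle 1\rangle$ (possibly twisted by $\{1\}$ in the orientation grading according to the direction of the edge), contributing $-q$, which matches $-2q\cos(\pi/3) = \langle \alpha_i,\alpha_j\rangle_q$ for a simply-laced diagram with all labels at most $3$. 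When $i \neq j$ are non-adjacent, $\HOM(P_i,P_j) = 0$, again agreeing with $\langle \alpha_i,\alpha_j\rangle_q = 0$.

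The only point requiring any care is the choice of convention for $\qdim$: one must fix the path-length shift to contribute the variable $q$ in such a way as to produce the correct sign on the off-diagonal terms, the standard choice being $\qdim(V\langle a\rangle\{b\}) = (-q)^a$ (equivalently, a $q \mapsto -q$ substitution of the naive graded dimension, with the orientation grading not intervening in the quantum-dimension formula). Once this bookkeeping is fixed so that the diagonal yields $1 + q^2$, the pairing agreement follows by inspection; this is the only step that is genuinely ``a choice'' rather than a computation, but it is forced as soon as one insists on recovering the Burau pairing exactly.
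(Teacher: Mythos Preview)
The paper does not actually give a proof of this lemma; it is stated as an immediate observation following the explicit computation of $\HOM_{\calC_\Gamma}(P_i,P_j)$ in the preceding lemma. Your argument is exactly the natural way to fill this in: identify $K_0$ via Krull--Schmidt for graded projectives, send $[P_i]\mapsto\alpha_i$, and read off the pairing case by case from the $\HOM$ table. Your explicit handling of the sign convention (fixing $\qdim(V\langle a\rangle\{b\})=(-q)^a$ so that the diagonal gives $1+q^2$ and the adjacent case gives $-q=-2q\cos(\pi/3)$) is a point the paper leaves implicit, and it is good that you flagged it, since the naive graded dimension would not produce the minus sign.
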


Complexes of projective modules will typically look like the following:
\[
\begin{tikzpicture}[anchorbase]
  \node (0) at (0,0) {$P_1$};
  \node (1) at (2,0) {$P_2\langle -1\rangle$};
  \node (1m) at (2,-1) {$P_3\{1\}$};
  \node (2) at (4,0) {$P_3\langle-2\rangle$};
  \draw [->] (0) -- (1);
  \draw [->] (1) -- (2);
  \draw [->] (1m) -- (2);
\end{tikzpicture}
\]

Because of the low dimension of the $\Hom$-spaces, the choice in the differentials will be very limited. For this reason, we'll often neglect to specify a choice for the differentials in a complex (as we did above).

A key message I'm trying to convey is that such complexes are easy to work with: the projectives $P_i$'s have explicit bases, have low dimension, we have explicit bases for the spaces of morphisms, and everything comes down to linear algebra.

\subsection{Spherical twists}

Now that we have a category that lifts up the vector space we care about, the next step is to define auto-equivalences on the category that will lift up the braid group action. This is done by using the notion of \emph{spherical twists} in a Calabi-Yau category, but in our case, thanks to adjunctness, this can be made very simple. One can find more details for example in~\cite{SeidelThomas}.

\begin{definition}
  Let $\Sigma_i$ and $\Sigma_i^{-1}$ be the following complexes of $A_{\Gamma}$-bimodules:
  \begin{align}
    \Sigma_i&= P_i\otimes_{\C} Q_i \xrightarrow{f} \uwave{A_{\Gamma}} \\
    \Sigma_i^{-1}&= \uwave{A_\Gamma} \xrightarrow{g} P_i\otimes_\C Q_i\langle -2\rangle 
  \end{align}
  where the terms in homological degree zero are underlined with waves, $f$ is defined by $f(e_i\otimes e_i)=e_i$ and : 
  \[
  g(1)=x_i\otimes e_i + \sum_{j\text{---}i} (j|i)\otimes (i|j) + e_i\otimes x_i
  \]
\end{definition}

$\Sigma_i$ and $\Sigma_i^{-1}$ are complexes of $A_\Gamma$-bimodules which are not projective as bimodules, but are projective as left and right modules. In particular, they induce autoequivalences on $\calC_\Gamma$ by tensor product.

\begin{lemma}
  We have the following homotopy equivalences:
  \begin{gather}
    \Sigma_i\otimes_{A_\Gamma}\Sigma_i^{-1}\otimes_{A_\Gamma}\simeq \Sigma_i^{-1}\otimes \Sigma_i\simeq A_\Gamma \\
    \Sigma_i\otimes_{A_\Gamma}\Sigma_j\otimes_{A_\Gamma}\Sigma_i\simeq \Sigma_j\otimes_{A_\Gamma}\Sigma_i\otimes_{A_\Gamma}\Sigma_j\; \text{if}\;i\text{---}j \\
    \Sigma_i\otimes_{A_\Gamma}\Sigma_j\simeq \Sigma_j\otimes_{A_\Gamma}\Sigma_i \text{if} \; i \; \text{and} \;j\;\text{are unrelated}.
  \end{gather}
\end{lemma}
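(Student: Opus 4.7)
The proof is by direct computation in each case. The uniform strategy is to write out the product $\Sigma_{?}^{\pm 1} \otimes_{A_\Gamma} \Sigma_{?}^{\pm 1}$ (or the triple product) as the total complex of a bicomplex (resp.\ tricomplex) of $A_\Gamma$-bimodules, use the previous path-counting lemma $Q_j \otimes_{A_\Gamma} P_i \simeq e_j A_\Gamma e_i$ to identify every internal tensor factor explicitly, and then reduce the result by Gauss elimination, cancelling acyclic summands of the form $X \xrightarrow{\mathrm{id}} X$.

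For the invertibility statement, the bicomplex $\Sigma_i \otimes_{A_\Gamma} \Sigma_i^{-1}$ has four corners. Three of them are (shifts of) $A_\Gamma$ or $P_i \otimes Q_i$, and the fourth, at bidegree $(-1, 1)$, is $P_i \otimes (Q_i \otimes_{A_\Gamma} P_i) \otimes Q_i \langle -2\rangle$, which by the lemma splits as $P_i \otimes Q_i \langle -2\rangle \oplus P_i \otimes Q_i \{1\}$. Inspecting the differentials (each built from $f$, $g$, and the multiplication in $A_\Gamma$) one checks that one summand pairs with the term at $(-1, 0)$ via the identity and the other with the term at $(0, 1)$; both pairs cancel, leaving only $A_\Gamma$ in total degree $0$. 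The composition $\Sigma_i^{-1} \otimes_{A_\Gamma} \Sigma_i$ is handled symmetrically. For the commutation relation, when $i$ and $j$ are unrelated the lemma gives $Q_i \otimes_{A_\Gamma} P_j = 0$, so the $(-1, -1)$ corner of the bicomplex for $\Sigma_i \otimes_{A_\Gamma} \Sigma_j$ vanishes outright. The remaining complex is $(P_i \otimes Q_i) \oplus (P_j \otimes Q_j) \to A_\Gamma$, manifestly symmetric in $i$ and $j$, yielding an isomorphism (not merely a homotopy equivalence) with $\Sigma_j \otimes_{A_\Gamma} \Sigma_i$.

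The genuine obstacle is the braid relation. Here $\Sigma_i \otimes \Sigma_j \otimes \Sigma_i$ is a tricomplex whose eight corners sit in homological degrees $-3, \dots, 0$, and one must evaluate four inner tensor factors of shape $Q_? \otimes_{A_\Gamma} P_?$, mixing the $i = j$ case (producing an extra $x_i$-summand carrying shift $\langle 2\rangle\{1\}$) with the $i \text{---} j$ case (one-dimensional, contributing a shift $\langle 1\rangle$ plus an orientation shift depending on the edge). The plan is to write both tricomplexes compactly as cubes of bimodules, expand every corner, and then peel off contractible subcomplexes by Gauss elimination until both sides are reduced to a common ``minimal'' complex, namely the natural bimodule resolution attached to the positive lift of the reflection along the root $\alpha_i + \alpha_j$, up to shift. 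The hard part is purely organizational: tracking the two gradings (path length and orientation) through every cancellation, and verifying that the residual differentials between the non-cancelled pieces coincide on the two sides. Once this bookkeeping is carried out one obtains an explicit homotopy equivalence $\Sigma_i \Sigma_j \Sigma_i \simeq \Sigma_j \Sigma_i \Sigma_j$, completing the proof.
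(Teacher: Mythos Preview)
Your proposal is correct and follows essentially the same route as the paper: write out the tensor product as a bicomplex (or tricomplex), use the path-counting lemma to split the inner factor $Q_i\otimes_{A_\Gamma}P_i$, and reduce by Gaussian elimination along the resulting identity components. Your treatment of the invertibility case matches the paper's computation line for line, and your handling of the commutation and braid cases supplies exactly the kind of bookkeeping the paper leaves to the reader; the only ingredient the paper adds that you do not mention is the conceptual remark that the bimodules $\mathcal{U}_i=P_i\otimes Q_i$ satisfy Temperley--Lieb relations, which organizes the braid-relation computation but is not logically required for it.
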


\begin{proof}[Sketched]
  The proof is essentially an explicit computation. The underlying idea is that if one isolates the terms $\mathcal{U}_i=P_i\otimes Q_i$, then one can check that they satisfy Temperley-Lieb relations.

  Let's do the invertibility in details. For simplicity, let's assume that we have an $A_n$-type configuration, at least locally, with $i=2$.

  \begin{gather*}
    (P_iQ_i\rightarrow A)\otimes (A\rightarrow P_iQ_i\langle -2\rangle)=
    \begin{tikzpicture}[anchorbase]
      \node (0) at (0,0) {\small $P_iQ_i$};
      \node (1+) at (3,1) {$A$};
      \node (1-) at (3,-1) {\small $P_iQ_iP_iQ_i\langle -2\rangle$};
      \node (2) at (6,0) {\small $P_iQ\langle-2\rangle$};
      \draw [->] (0)  to node [midway,above, rotate=18] {\tiny $a\otimes b\rightarrow ab $} (1+);
      \draw [->] (0) to node [midway, below,rotate=-18] {\tiny \begin{minipage}{2cm} $a\otimes b\rightarrow$ \\ $a\otimes b\otimes c$\end{minipage}}(1-);
      \draw [->] (1+) to node [midway, above, rotate=-18] {\tiny $1\rightarrow c$} (2);
      \draw [->] (1-) to node [midway, below,xshift=.5cm, rotate=18] {\tiny \begin{minipage}{1.5cm}$a\otimes b\otimes c \otimes d$\\ $\rightarrow (a\otimes b)(cd)$\end{minipage}} (2);
    \end{tikzpicture}
  \end{gather*}    

  Above $c=x_2\otimes e_2+(1|2)\otimes (2|1) + (3|2) \otimes (2|3) + 1\otimes x_2$. Then we use $P_iQ_i\simeq \C\oplus \C\langle 2\rangle$ to get:
  
\begin{gather*}
    (P_iQ_i\rightarrow A)\otimes (A\rightarrow P_iQ_i\langle -2\rangle)=
    \begin{tikzpicture}[anchorbase]
      \node (0) at (0,0) {\small $P_iQ_i$};
      \node (1+) at (3,1) {$A$};
      \node (1-+) at (3,-1) {\small $P_iQ_i\langle -2\rangle$};
      \node (1--) at (3,-2) {\small $P_iQ_i$};
      \node (2) at (6,0) {\small $P_iQ\langle-2\rangle$};
      \draw [->] (0)  to node [midway,above, rotate=18] {\tiny $a\otimes b\rightarrow ab $} (1+);
      \draw [->] (0) to node [midway, xshift=.5cm, yshift=.2cm,rotate=-18] {\tiny \begin{minipage}{2cm} $a\otimes b\rightarrow$ \\ $e_2^*(b\otimes c_1)(a\otimes c_2)$\end{minipage}}(1-+);
      \draw [->] (0) to node [midway, below,rotate=-32] {\tiny \begin{minipage}{2cm} $a\otimes b\rightarrow$ \\ $x_2^*(b\otimes c_1)(a\otimes c_2)$\end{minipage}}(1--);
      \draw [->] (1+) to node [midway, above, rotate=-18] {\tiny $1\rightarrow c$} (2);
      \draw [->] (1-+) to node [midway, above, rotate=18] {\tiny $\id$} (2);
      \draw [->] (1--) to node [midway, below, rotate=32] {\tiny $a \otimes d\rightarrow a\otimes dx_2$} (2);
    \end{tikzpicture}
\end{gather*}

Above we've used $c=\sum c_1\otimes c_2$. Notice that the bottom-most map on the left has the following effect on the generator:
\[
e_i\otimes e_i\rightarrow e_i\otimes e_i
\]
so this is just an identity.

Running a Gaussian elimination along both identity maps brings us back to just $A$.  
\end{proof}

\begin{definition}
$\B(\Gamma)$ acts on on $\calC_\Gamma$ by sending $\sigma_i$ to $\Sigma_i\otimes -$ and $\sigma_i^{-1}$ to $\Sigma_i^{-1}\otimes -$.
\end{definition}

The effect on projectives is easy to compute.

\begin{lemma}
\begin{equation} \label{eq:actionsigmai}
\sigma_i(P_j)=\Sigma_i\otimes_{A_{\Gamma}}(P_j)=
\begin{cases}
  P_j\langle2\rangle\{1\}[-1] \;\text{if}\; i=j; \\
  P_i\langle 1\rangle \rightarrow P_j\;\text{if}\;i\rightarrow j;\\
    P_i\langle 1\rangle\{1\} \rightarrow P_j\;\text{if}\;i\leftarrow j;\\
  P_j\;\text{otherwise}.
\end{cases}
\end{equation}
Above, on the second and third lines, $P_j$ lies in homological degree zero.

\begin{equation} \label{eq:actionsigmaiinv}
\sigma_i^{-1}(P_j)=\Sigma_i^{-1}\otimes_{A_{\Gamma}}(P_j)=
\begin{cases}
  P_j\langle-2\rangle\{-1\}[1] \;\text{if}\; i=j; \\
  P_j\rightarrow P_i\langle -1\rangle\{-1\} \;\text{if}\;i \rightarrow j;\\
  P_j\rightarrow P_i\langle -1\rangle \;\text{if}\;i\leftarrow j;\\
  P_j\;\text{otherwise}.
\end{cases}
\end{equation}
\end{lemma}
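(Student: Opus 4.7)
The plan is to compute each tensor product $\Sigma_i\otimes_{A_\Gamma} P_j$ and $\Sigma_i^{-1}\otimes_{A_\Gamma} P_j$ term by term from the two-term bimodule complexes defining $\Sigma_i$ and $\Sigma_i^{-1}$, and then simplify by a single Gaussian elimination when $i=j$. The key tool is the identity
\[
(P_i\otimes_\C Q_i)\otimes_{A_\Gamma} P_j \;\cong\; P_i\otimes_\C (Q_i\otimes_{A_\Gamma} P_j),
\]
together with the previous lemma, which evaluates the inner factor $Q_i\otimes_{A_\Gamma}P_j\cong e_iA_\Gamma e_j$ as $\C\oplus \C\langle 2\rangle\{1\}$ for $i=j$ (basis $e_i$ and the loop $x_i$), as one-dimensional of degree $\langle 1\rangle$ or $\langle 1\rangle\{1\}$ for adjacent $i\neq j$ (depending on the orientation of the arrow between them), and as zero otherwise.

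Substituting this into
\[
\Sigma_i\otimes_{A_\Gamma} P_j \;=\; \Bigl(P_i\otimes_\C(Q_i\otimes_{A_\Gamma}P_j)\xrightarrow{f}P_j\Bigr)
\]
reads off the three non-diagonal lines of \eqref{eq:actionsigmai} directly. For $i=j$ the complex becomes $P_i\oplus P_i\langle 2\rangle\{1\}\xrightarrow{f}P_i$, in which the restriction of $f$ to the first summand is the identity by the defining equation $f(e_i\otimes e_i)=e_i$; a single Gaussian elimination then cancels both copies of $P_i$ and leaves $P_i\langle 2\rangle\{1\}$ in homological degree $-1$, matching the stated $P_i\langle 2\rangle\{1\}[-1]$.

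The argument for $\Sigma_i^{-1}$ is dual. The tensored complex is $P_j\xrightarrow{g} P_i\otimes_\C(Q_i\otimes_{A_\Gamma}P_j)\langle-2\rangle$, and the non-diagonal cases again drop out immediately. For $i=j$, I would right-multiply the defining formula
\[
g(1)=x_i\otimes e_i+\sum_{k\text{---}i}(k|i)\otimes(i|k)+e_i\otimes x_i
\]
by $e_i$, using that $(i|k)\cdot e_i=0$ for $k\neq i$ to kill the middle sum; this leaves $g(e_i)=x_i\otimes e_i+e_i\otimes x_i$. In the decomposition $P_i\langle-2\rangle\oplus P_i\{1\}$ of the target, the term $x_i\otimes e_i$ is the element $x_i\in P_i\langle-2\rangle$ while $e_i\otimes x_i$ is the identity generator $e_i$ of the second summand $P_i\{1\}$. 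The latter provides an identity map that Gaussian elimination cancels, leaving exactly the stated shifted copy of $P_i$ in homological degree $+1$.

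The only real source of difficulty is bookkeeping: tracking both gradings $\langle-\rangle$ and $\{-\}$ through the isomorphism $Q_i\otimes_{A_\Gamma} P_j\cong e_iA_\Gamma e_j$ and identifying which generator of $e_iA_\Gamma e_j$ corresponds to which graded summand of $P_i\otimes(Q_i\otimes_{A_\Gamma}P_j)$, so as to confirm that the surviving component after Gaussian elimination sits in the bigraded and homological degree predicted by \eqref{eq:actionsigmai} and \eqref{eq:actionsigmaiinv}. Once that is set up, each case is a short linear algebra verification.
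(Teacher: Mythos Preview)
Your approach is correct and is exactly the direct computation the paper has in mind; the paper gives no explicit proof and simply says the effect on projectives is ``easy to compute'', which is precisely your use of associativity $(P_i\otimes_\C Q_i)\otimes_{A_\Gamma}P_j\cong P_i\otimes_\C(Q_i\otimes_{A_\Gamma}P_j)$ together with the earlier lemma on $Q_i\otimes_{A_\Gamma}P_j$ and a single Gaussian elimination when $i=j$.

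One minor bookkeeping point: in the $\Sigma_i^{-1}$, $i=j$ case your decomposition of the target as $P_i\langle-2\rangle\oplus P_i\{1\}$ leaves $P_i\langle-2\rangle$ after cancellation, whereas the statement asks for $P_i\langle-2\rangle\{-1\}$. This discrepancy is not an error in your method but reflects that the paper's definition of $\Sigma_i^{-1}$ omits the $\{-1\}$ shift needed to make $g$ homogeneous for the orientation grading (each term of $g(1)$ has orientation degree $1$); once that shift is restored, your computation lands exactly on the stated answer.
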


This directly implies the following result.

\begin{corollary}
  The Khovanov-Seidel categorical representation decategorifies on the Burau representation.
\end{corollary}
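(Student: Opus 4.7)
The plan is to decategorify the preceding lemma directly. Two ingredients are already in place: the identification $K_0(\calC_\Gamma)\simeq V_q(\Gamma)$ sending $[P_j]\mapsto \alpha_j$, under which $\qdim \HOM(P_i,P_j)$ equals the Burau pairing $\langle\alpha_i,\alpha_j\rangle_q$; and the explicit formulas~\eqref{eq:actionsigmai}--\eqref{eq:actionsigmaiinv} giving $\Sigma_i\otimes_{A_\Gamma} P_j$ up to homotopy. The task reduces to computing the class $[\Sigma_i\otimes_{A_\Gamma} P_j]\in K_0(\calC_\Gamma)$ for each $j$ and verifying that it agrees with $\sigma_i(\alpha_j) = \alpha_j - \langle\alpha_i,\alpha_j\rangle_q \alpha_i$.

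First I would fix once and for all the dictionary between grading shifts and numerical factors: the path-length shift $\langle n\rangle$ contributes $q^n$, the homological shift $[n]$ contributes $(-1)^n$, and the orientation shift $\{n\}$ is translated so as to be consistent with the equality $\qdim \HOM(P_i,P_j)=\langle\alpha_i,\alpha_j\rangle_q$ established in the preceding Hom-space lemma. A quick sanity check pins down this dictionary with no freedom: in the diagonal case $[\C \oplus \C\langle 2\rangle\{1\}]$ must reproduce $1+q^2$, and in the off-diagonal cases $[\C\langle 1\rangle\{1\}]$ and $[\C\langle 1\rangle]$ must reproduce the correct sign on $-q$ forced by the orientation of the edge.

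Second, I would simply run through the four cases of~\eqref{eq:actionsigmai}. If $i=j$, the class $[P_j\langle 2\rangle\{1\}[-1]]$ evaluates to $-q^2\alpha_i$, matching $\sigma_i(\alpha_i)=\alpha_i-(1+q^2)\alpha_i$. If $i$ and $j$ are adjacent (either orientation), the two-term complex $P_i\langle 1\rangle\{\epsilon\}\to P_j$ decategorifies to $\alpha_j-q\alpha_i$ with the appropriate sign, matching $\alpha_j-\langle\alpha_i,\alpha_j\rangle_q\alpha_i$ in the simply-laced case. If $i$ and $j$ are unrelated, $\Sigma_i\otimes P_j\simeq P_j$ gives $\sigma_i(\alpha_j)=\alpha_j$, matching $\langle\alpha_i,\alpha_j\rangle_q=0$. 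A parallel computation for $\sigma_i^{-1}$ using~\eqref{eq:actionsigmaiinv} closes the argument; alternatively, one observes that $K_0$ sends inverse functors to inverse linear maps, so the invertibility of $\Sigma_i$ from the previous lemma already forces the $\sigma_i^{-1}$ case.

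There is essentially no conceptual obstacle here; the hardest part is the purely clerical task of nailing down how the orientation grading $\{-\}$ interacts with signs, in such a way that the Burau pairing is reproduced on the nose. Once this dictionary is pinned down by matching the Hom-space computation, each of the four cases is a one-line verification and the corollary follows.
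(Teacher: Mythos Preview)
Your proposal is correct and is precisely the argument the paper has in mind: the paper gives no proof at all, simply stating that the preceding lemma ``directly implies'' the corollary, and your case-by-case reading of classes in $K_0$ from~\eqref{eq:actionsigmai}--\eqref{eq:actionsigmaiinv} is exactly that direct implication made explicit.
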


\begin{theorem}[\cite{RouquierZimmerman,KhS,BravThomas,Riche, IshiiUedaUehara,GTW, Jensen}] \label{thm:KhSfaithfulness}
  If $\Gamma$ is of finite type or affine type $A$, and if $\beta \in \B(\Gamma)$ acts by the identity on $\calC$, then $\beta=\id$ in $\B(\Gamma)$.
\end{theorem}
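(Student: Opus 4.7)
The plan is to show that the action of $\beta$ on $\calC_\Gamma$ remembers the braid word, by extracting enough combinatorial data from the complex $\Sigma_\beta\otimes_{A_\Gamma} P_j$ for a well-chosen projective $P_j$. The argument splits into a reduction step and a minimality step.

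First I would reduce to positive braids. Using Garside normal form (classical in finite type and available in affine type $A$), every $\beta \in \B(\Gamma)$ can be written as $\beta = \Delta^{-k}\beta^+$ with $\beta^+ \in \B(\Gamma)^+$ and $k \geq 0$. Since $\Delta$ acts by an invertible and explicitly computable autoequivalence on $\calC_\Gamma$, the hypothesis that $\beta$ acts by the identity translates into a comparison between the actions of $\beta^+$ and $\Delta^k$; so it is enough to prove faithfulness on the positive monoid. Next, given $\beta^+ = \sigma_{i_1}\cdots \sigma_{i_\ell}$, I would iterate equation~\eqref{eq:actionsigmai} to build $\Sigma_{\beta^+}(P_j)$ as a convolution of two-term complexes placed in non-positive homological degrees. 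The key structural claim is that this iterated convolution is already in \emph{minimal form}, meaning no direct summand is contractible and no Gaussian elimination further simplifies it. Minimality forces the homological amplitude of $\Sigma_{\beta^+}(P_j)$ to grow with the word length $\ell(\beta^+)$, so a trivial action forces $\ell(\beta^+) = 0$, i.e.\ $\beta^+ = \id$ in the monoid. Combined with the reduction, this yields faithfulness.

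The main obstacle is the minimality claim. Morally, it rests on the observation that every differential produced by tensoring with $\Sigma_i$ factors through the graded radical of $A_\Gamma$: this uses the explicit $\Hom$-spaces computed earlier, together with the vanishing of paths of length $\geq 3$ in the zig-zag algebra, which together prevent any unit from appearing in a differential at any convolution step. Tracking this through arbitrary iterations is genuinely delicate, and it is where the essential technical work lies. Khovanov and Seidel circumvent the combinatorics in type $A$ via an elegant geometric model: objects of $\calC_\Gamma$ correspond to isotopy classes of multicurves on a punctured disk, the $\Sigma_i$ realize the half-twists of the classical braid group action, and faithfulness reduces to Alexander's method for the mapping class group. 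The other references cited in the theorem adapt this philosophy to the remaining cases: Seidel-Thomas via DG enhancements, Brav-Thomas via $t$-structures on Calabi-Yau categories, and Gadbled-Thiel-Wagner via multicurves on an annulus for affine type $A$.
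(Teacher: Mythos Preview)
Your reduction to positive braids via Garside normal form is the right opening move, and it is indeed the backbone of the Brav--Thomas argument that the paper cites. But the central mechanism you propose---that the iterated convolution $\Sigma_{\beta^+}(P_j)$ is automatically minimal and that its homological amplitude grows with the word length $\ell(\beta^+)$---is not correct as stated, and this is a genuine gap rather than a technicality.

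First, cancellations \emph{do} occur when you iterate~\eqref{eq:actionsigmai}. The very first line of that equation already witnesses one: $\Sigma_i\otimes P_i$ is not a two-term complex but a single shifted projective $P_i\langle 2\rangle\{1\}[-1]$, obtained from $P_iQ_i\otimes P_i\to P_i$ after a Gaussian elimination. So the slogan ``every differential factors through the radical, hence no unit ever appears'' fails at step one. Second, even granting minimality, the homological amplitude of $\Sigma_{\beta^+}(P_j)$ is \emph{not} the word length of $\beta^+$: apply $\sigma_1\sigma_3$ to $P_2$ in type $A_3$ and you get a complex of amplitude~$1$, not~$2$. Commuting generators pile up in the same homological degree. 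So ``amplitude zero forces $\ell(\beta^+)=0$'' is simply false.

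What actually works, and what the paper sketches just after the theorem, is more refined. In type $A$ (and affine $A$), Khovanov--Seidel bypass the combinatorics entirely via the curve model: Lemma~\ref{lem:inter} matches $\dim\Hom$ with geometric intersection, and then Theorem~\ref{th:IdLeavesInvariant} (a mapping-class-group fact) finishes it. In type ADE, Brav--Thomas do use Garside theory, but not by counting amplitude: they write $\beta^+$ in \emph{right-greedy normal form} as a product of simple elements (positive lifts from the Weyl group), and then show that the left descent set of the leading Garside factor can be read off from the top linear layer of $\Sigma_{\beta^+}(\oplus P_i)$---this is exactly the content of Proposition~4.13 and Lemma~4.17 cited later in the notes. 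The induction is on the number of Garside factors, not on raw word length, and the invariant extracted is a descent set, not an amplitude. Your description of Brav--Thomas as ``via $t$-structures on Calabi--Yau categories'' misses this Garside-theoretic core.
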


In type $A$, the proof from Khovanov and Seidel~\cite{KhS} follows from Lemma~\ref{lem:inter} and geometric arguments (in particular Theorem~\ref{th:IdLeavesInvariant} that will be stated later). This argument is extended to affine type $A$ in \cite{GTW}. Brav-Thomas' proof in type ADE~\cite{BravThomas} relies on Garside theory. One uses a Garside normal form, reduces to lifts from the Weyl group, and for those one identifies descents from lifted projectives. We'll get back to these in the next section.

\begin{problem}
  What about faithfulness in general? This is closely related to the faithfulness of the action on Soergel bimodules~\cite{Rouquier_conjecture}.
\end{problem}

 \section{Curves and spherical objects}

In type $A$, a decent part of the analysis of braid groups relies on their action on curves in the disk with marked points. This interpretation naturally extends to mapping class groups of surfaces (generalizing to the curve complex for example, and Teichmüller theory, see~\cite{FarbMargalit}), but this point of view fails to extend to Artin-Tits groups, as there is no underlying surface in general. In type $A$, curves in the disk have a translation in $\calC_\Gamma$ as spherical objects. Since these constructions do extend for general families of Artin-Tits groups, it is very tempting to conjecture that they should serve as a replacement for the geometric strategies used to study mapping class groups when the geometry disappears. Outside of the range of cases covered by Theorem~\ref{thm:KhSfaithfulness}, one should either first prove faithfulness, or state results for the group of autoequivalences of $\calC_\Gamma$. But even in type $D$ and $E$ (and sometimes even in type $A$), I'll highlight a few questions that are still open and, I hope, might benefit from this categorical perspectives. We'll in particular discuss questions related to orderability and the Haagerup property.

\subsection{Action on curves}

The braid group in type $A$ can appear from several perspectives. One of them makes it the mapping class group of a disk with $n+1$ unordered marked points. Recall that mapping class groups are defined as follows, for $\Sigma$ a surface:
\[
MCG(\Sigma)=\frac{\mathrm{Diff}^+(\Sigma)}{Isotopy}
\]
Here the marked points are fixed globally, but the boundary component is fixed pointwise.

The corresponding action of $\B(A_n)$ on $\D^2$ will play an important role in what follows. A generator $\sigma_i$ acts on the punctured disk by the half Dehn twist along the segment joining the $i$-th and $i+1$-st points. Figure~\ref{fig:DehnTwist} illustrates the process.

\begin{figure}[!h]
\[
  \begin{tikzpicture}[anchorbase,scale=.5]
    % Disk
    \draw (0,0) ellipse (3 and 2);
    \node (1) at (-2,0) {$\bullet$};
    \node (2) at (-1,0) {$\bullet$};
    \node (3) at (-0,0) {$\bullet$};
    \node (4) at (1,0) {$\bullet$};
    \node (5) at (2,0) {$\bullet$};
    % Dehn twist
    \draw [->, red] (-.9,-.1) to [out=-60,in=-120] (-.1,-.1);
    \draw [->, red] (-.1,.1) to [out=120,in=60] (-.9,.1);
  \end{tikzpicture}
  \]
  \caption{Action of $\sigma_2$}
  \label{fig:DehnTwist}
\end{figure}
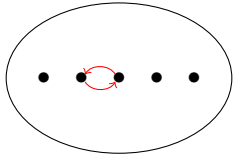

From there one can focus on the action on curves drawn in the punctured disk. The family of curves that appear on the left hand side in Figure~\ref{fig:curves1} will play a key role in what follows. On the right hand side is depicted the image of the same family under the braid $\sigma_1\sigma_2^{-1}$.

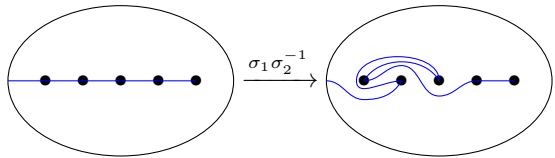
\begin{figure}
  \[
  \begin{tikzpicture}[anchorbase,scale=.5]
    % Disk
    \draw (0,0) ellipse (3 and 2);
    \node (1) at (-2,0) {$\bullet$};
    \node (2) at (-1,0) {$\bullet$};
    \node (3) at (-0,0) {$\bullet$};
    \node (4) at (1,0) {$\bullet$};
    \node (5) at (2,0) {$\bullet$};
    % Curves
    \draw [blue] (-3,0) -- (1.center);
    \draw [blue](1.center) -- (2.center);
    \draw [blue](2.center) -- (3.center);
    \draw [blue](3.center) -- (4.center);
    \draw [blue](4.center) -- (5.center);
  \end{tikzpicture}
  \xrightarrow{\sigma_1\sigma_2^{-1}}
  \begin{tikzpicture}[anchorbase,scale=.5]
    % Disk
    \draw (0,0) ellipse (3 and 2);
    \node (1) at (-2,0) {$\bullet$};
    \node (2) at (-1,0) {$\bullet$};
    \node (3) at (-0,0) {$\bullet$};
    \node (4) at (1,0) {$\bullet$};
    \node (5) at (2,0) {$\bullet$};
    % Curves
    \draw [blue](-3,0) to [out=0,in=180] (-2,-.5) to [out=0,in=-120] (2.center);
    \draw [blue](2.center) to [out=180,in=-90] (-2.2,0) to [out=90, in=90] (3.center);
    \draw [blue](3.center) to [out=120,in=60] (1.center);
    \draw [blue](1.center) to [out=0,in=180] (2.north) to [out=0,in=180] (3.south) to [out=0,in=180] (4.center);
    \draw [blue](4.center) -- (5.center);
  \end{tikzpicture}
\]
\caption{Standard curves and braid action}
\label{fig:curves1}
\end{figure}

A key result is the faithfulness of this action on curves, which is a classical result on mapping class groups~(see for example Lemma 2.1 combined with Proposition 1.11, that will need to be upgraded for collections of curves, in~\cite{FarbMargalit}). This result is a key step in Khovanov and Seidel's faithfulness proof.

\begin{theorem} \label{th:IdLeavesInvariant}
  A braid that leaves all curves invariant is the identity.
\end{theorem}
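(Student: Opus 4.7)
The plan is to invoke the Alexander method for the mapping class group of a disk with marked points: fixing a suitable filling collection of arcs forces the mapping class to be trivial. Concretely, I would first specialize the hypothesis to the ``fan'' of arcs $\gamma_0, \gamma_1, \ldots, \gamma_n$ of Figure~\ref{fig:curves1}, where $\gamma_0$ joins $\partial \D^2$ to the leftmost marked point $p_1$ and $\gamma_i$ joins $p_i$ to $p_{i+1}$ for $i \geq 1$. Since $\beta$ fixes $\partial \D^2$ pointwise and $\gamma_0$ setwise up to isotopy, its unique interior endpoint $p_1$ must be fixed; iterating this observation along the fan using the common endpoint $p_i$ of $\gamma_{i-1}$ and $\gamma_i$, all marked points are fixed, so $\beta$ lies in the pure braid subgroup.

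The second step is to straighten $\beta$ arc by arc. Given that $\beta(\gamma_0)$ is isotopic to $\gamma_0$ relative to endpoints, there is an ambient isotopy of $\D^2$ supported in a neighborhood of the arcs carrying $\beta(\gamma_0)$ onto $\gamma_0$ pointwise; this is the standard fact that two embedded arcs in a surface with identical endpoints and isotopic relative to endpoints are ambient isotopic relative to endpoints (Farb--Margalit Proposition 1.11, as invoked in the excerpt). After composing with this isotopy we may assume $\beta|_{\gamma_0}=\id$. Cutting along $\gamma_0$ leaves a disk still containing $p_2, \ldots, p_{n+1}$, and the same argument applied inside this complement straightens $\gamma_1$ without disturbing $\gamma_0$. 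Iterating yields a representative of $\beta$ that fixes every $\gamma_i$ pointwise.

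Finally, cutting the disk along the tree $\bigcup_i \gamma_i$ produces a single topological disk carrying no marked points, on which $\beta$ restricts to a diffeomorphism fixing the entire boundary pointwise. Alexander's trick (coning down to the centre) gives an explicit isotopy to the identity, and regluing shows $\beta \simeq \id$ in $\B(A_n)$. The most delicate step is the inductive straightening: one must iterate the ambient isotopy argument while keeping each successive isotopy supported in the complement of the arcs already fixed, so that no straightening gets undone. All of this is classical for surfaces, but it is precisely where the geometry of the punctured disk does the work---in the categorical setting that dominates the rest of these notes, one should expect that a substantially different, algebraic replacement will be required.
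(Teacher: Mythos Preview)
Your proposal is correct and follows exactly the route the paper indicates: the paper does not spell out a proof but defers to Farb--Margalit (Lemma~2.1 combined with Proposition~1.11, ``upgraded for collections of curves''), which is precisely the Alexander-method argument you have written out---fix the fan of arcs from Figure~\ref{fig:curves1}, straighten them one by one via ambient isotopy rel endpoints, and finish with Alexander's trick on the complementary disk. Your identification of the delicate inductive step (keeping later isotopies supported away from arcs already straightened) is exactly the ``upgrade for collections'' the paper alludes to.
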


Now might be a good time for a little {\it apart\'e} about orderability of braid and Artin-Tits groups, as this is closely related to the action on curves.

\subsection{Ordering braids}

\begin{definition}
  $G$ is left-orderable if there exists a total order $\leq$ on $G$ that is left invariant:
  \[
\forall a,b,c\in G,\quad b\leq c \;\Rightarrow\; ab\leq ac.
  \]
\end{definition}

There are analogous notions of right-orderable or bi-orderable groups.

Braids were proved to be orderable by Dehornoy in the early 90's~\cite{Dehornoy} (see~\cite{Kassel_bourbaki} for a survey). His original construction relies on set theory outside of the Zermelo-Fraenkel axioms. This is a quite surprising connection, but more down-to-earth reinterpretations were found soon after~\cite{FGRRW}. Close to our interest is the fact that one can use curves to describe this order.

\begin{theorem}
  The following defines a left-invariant order. Given two braids $\alpha$ and $\beta$, consider the images under $\alpha$ and $\beta$ of the system of curves from Figure~\ref{fig:curves1}, and put them in minimal position. Starting from the left, locate the first curve that admits different images under $\alpha$ and $\beta$. If the image under $\alpha$ first goes lower than the other one, then $\alpha>\beta$. Otherwise $\beta>\alpha$.
\end{theorem}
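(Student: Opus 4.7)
The plan is to verify three properties: well-definedness of the procedure, trichotomy, and left-invariance; transitivity will follow from a short case analysis on the indices of first disagreement. I will use Theorem~\ref{th:IdLeavesInvariant} as the only non-trivial input.

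First, to make the comparison well-defined, one places the two families $\alpha(c_\bullet)$ and $\beta(c_\bullet)$ into simultaneous minimal position. The resulting combinatorial intersection pattern and the cyclic ordering of arc-ends at each puncture are determined up to isotopy. If $\alpha(c_j)$ equals $\beta(c_j)$ (as isotopy classes) for $j<i$, then $\alpha$ and $\beta$ agree on the endpoints of $c_i$, so the two arcs $\alpha(c_i)$ and $\beta(c_i)$ share endpoints; in minimal position they diverge at a well-defined first point read from the boundary end, one passing below and one above, and this makes the phrase "first goes lower" unambiguous.

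For trichotomy, suppose $\alpha\neq\beta$, so that $\beta^{-1}\alpha$ is a nontrivial braid. By Theorem~\ref{th:IdLeavesInvariant} it fails to leave every standard curve invariant, so there is a smallest index $i$ with $\alpha(c_i)\neq\beta(c_i)$. At this first disagreement exactly one of the two options occurs, giving exactly one of $\alpha>\beta$ or $\alpha<\beta$ and never both. Transitivity is then a quick case check: if $\alpha>\beta$ and $\beta>\gamma$ have first disagreement indices $i$ and $j$, then the minimum of $i$ and $j$ governs the comparison of $\alpha$ and $\gamma$, with the "lower" choice propagating in each of the three cases $i<j$, $i>j$, $i=j$.

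The main point, and the heart of the argument, is left-invariance: we must show $\alpha>\beta\iff \gamma\alpha>\gamma\beta$ for every braid $\gamma$. Any representative of $\gamma$ in its mapping class is an orientation-preserving homeomorphism of the disk fixing the boundary pointwise; such a map carries the pair $(\alpha(c_\bullet),\beta(c_\bullet))$ in minimal position to $(\gamma\alpha(c_\bullet),\gamma\beta(c_\bullet))$ again in minimal position. The combinatorial data—which arcs agree for $j<i$, the smallest disagreement index $i$, and on which side the two arcs pass each other—is preserved under the action of an orientation-preserving self-homeomorphism, so the comparison is preserved as well.

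The main technical obstacle will be to formulate "goes lower first" as an \emph{intrinsic}, orientation-invariant combinatorial quantity, so that the last step is not a sleight of hand about a global notion of "down" in the disk. The cleanest route is the following: for two arcs in minimal position sharing an endpoint $p$, the local germs of the two arcs at $p$, together with the orientation of the disk, define an ordered pair (\emph{left} vs.\ \emph{right} at $p$); this local invariant is manifestly preserved by orientation-preserving homeomorphisms. Identifying the theorem's "lower" with this intrinsic choice (read from the endpoint closer to the boundary) turns left-invariance into a tautology and completes the verification.
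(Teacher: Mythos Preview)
The paper does not prove this theorem; it is quoted as a known result, with the references \cite{Dehornoy,FGRRW} providing the background. So there is no ``paper's own proof'' to compare against, and your outline should be judged on its own merits.

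Your plan is essentially the standard one and the left-invariance step is correct: an orientation-preserving homeomorphism carries minimal position to minimal position and preserves the local cyclic order of germs at the shared endpoint, so the comparison is preserved. Two small points. First, when $\alpha(c_j)=\beta(c_j)$ for $j<i$, the arcs $\alpha(c_i)$ and $\beta(c_i)$ are only guaranteed to share the \emph{starting} endpoint (the one determined by $c_{i-1}$ or the boundary); the far endpoint may differ, so your ``share endpoints'' should read ``share an endpoint''. Second, and more substantively, the transitivity case $i=j$ is not the ``quick case check'' you advertise: you need that ``first goes lower'' defines a genuine \emph{linear} order on isotopy classes of arcs emanating from a fixed marked point, which in particular requires that two distinct arcs never have the same ``first divergence'' behaviour. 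This is where the real work sits. A cleaner route (and the one taken in \cite{FGRRW}) is to use your left-invariance argument to reduce to the case $\beta=\id$, so that positivity of $\alpha$ means the first curve moved by $\alpha$ goes below the horizontal axis; transitivity then becomes the statement that the set of positive braids is closed under multiplication, which is the genuine content of the theorem and is not a one-line check.
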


\begin{example}
  From Figure~\ref{fig:curves1}, one can see that $\sigma_1\sigma_2^{-1}$ is positive (greater than $\id$).
\end{example}

Amongst consequences of orderability, it implies that the group satisfies the Kaplansky idempotent conjecture: the only idempotent in the group algebra $\C[G]$ is $\id$. For representation theorists that tend to look at braids from the Hecke algebra (recall that it is still an open question to show that braids inject in the Hecke algebra), where we have tons of idempotents, this is a bit of a strange behavior!

As far as I know, orderability has been proven for a very restricted family of braid groups.
\begin{theorem}[\cite{Dehornoy},\cite{MulhollandRolfsen} building upon \cite{CrispParis}]
  Artin-Tits groups are orderable in types $A$ and $D$.
\end{theorem}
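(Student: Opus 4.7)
The plan is to treat types $A$ and $D$ by different methods: type $A$ directly via the action on curves just introduced (this is essentially Dehornoy's order, rephrased geometrically), and type $D$ via a reduction to type $A$ along a semidirect structure due to Crisp-Paris.

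For type $A$, the strategy is to verify that the curve-based relation introduced in the preceding theorem is indeed a left-invariant total order on $\B(A_n)$. Given $\alpha,\beta$, one compares the images of the standard family from Figure~\ref{fig:curves1}, placed in minimal position, and reads off the first curve on which the two images differ and which one first dips lower. Three points must be checked. First, well-definedness of ``minimal position'': two collections of essential arcs with fixed endpoints have a minimal-intersection representative unique up to isotopy, which is standard surface topology. Second, totality: for $\alpha\neq\beta$ there exists some curve $c_i$ with $\alpha(c_i)\neq \beta(c_i)$, by the faithfulness Theorem~\ref{th:IdLeavesInvariant}; a combinatorial analysis of the first crossing where two such arcs diverge then forces a clean ``$\alpha$ below'' / ``$\beta$ below'' dichotomy. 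Third, left-invariance: a braid $\gamma$ acts as an orientation-preserving homeomorphism of the disk fixing the boundary pointwise, so it sends the local ``below'' relation near a puncture to itself. Transitivity falls out of a bookkeeping argument on the first-difference indices.

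For type $D$, I would follow Mulholland-Rolfsen, who combine (i) Crisp-Paris's structural description of $\B(D_n)$ as (or its embedding into) a semidirect product $F \rtimes \B(A_{n-1})$, with $F$ free and the type-$A$ factor acting by explicit automorphisms arising from the folding/orbifold construction; (ii) Magnus's classical bi-orderability of free groups; (iii) the type-$A$ case just proved; and (iv) the general principle that a semidirect product $H\rtimes_\phi G$ of left-orderable groups is left-orderable provided $G$ acts on $H$ by automorphisms preserving a fixed order on $H$. The type-$D$-specific input is the verification that the Crisp-Paris action on $F$ preserves the Magnus order; one accomplishes this by selecting a basis of $F$ adapted to the action and checking that conjugation by each standard generator sends positive Magnus words to positive Magnus words.

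The principal obstacle is the totality step in type $A$. Faithfulness of the curve action gives a curve on which $\alpha$ and $\beta$ disagree, but converting this into a well-behaved ``lower first'' trichotomy that interacts correctly with composition requires careful analysis of intersection patterns in minimal position; algebraically this is exactly Dehornoy's nontrivial statement that every non-identity braid has a $\sigma$-positive or $\sigma$-negative representative. By contrast, the type-$D$ reduction is largely formal once the Crisp-Paris semidirect structure and the type-$A$ case are in hand, and the semidirect-product orderability principle is a standard exercise.
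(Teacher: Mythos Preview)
Your approach matches the paper's: type $A$ via the curve-based order (the preceding theorem), and type $D$ via the Crisp--Paris decomposition $\B(D_n)\simeq \B(A_{n-1})\ltimes F_{n-1}$ combined with orderability of the factors. The paper also records an alternative type-$D$ route through Perron--Vannier's embedding into a surface mapping class group and Rourke--Wiest, which you do not mention, but your chosen route is the one the paper foregrounds.

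One correction to your step (iv): you state the semidirect-product principle with a compatibility hypothesis (``provided $G$ acts on $H$ by automorphisms preserving a fixed order on $H$'') and then propose to verify that the Crisp--Paris action preserves the Magnus order. This hypothesis is needed for \emph{bi}-orderability, not for left-orderability. For left-orderability the correct statement is unconditional: any extension $1\to H\to E\to G\to 1$ with $H$ and $G$ left-orderable is left-orderable, via the positive cone $P_E=\pi^{-1}(P_G)\cup P_H$. So the ``type-$D$-specific input'' you describe is unnecessary, and the reduction is entirely formal once Crisp--Paris and the type-$A$ case are in hand. Your identification of the totality step in type $A$ as the substantive content is accurate.
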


In type $D$, the proof uses Crisp and Paris decomposition: $\B(D_n)\simeq \B(A_{n-1})\ltimes F_{n-1}$, where $F_{n-1}$ is the free group on $n$ generators. The type $A$ part is obtained through the following morphism:
\[
\begin{tikzpicture}[anchorbase]
  \node (D) at (0,0) {
    \begin{tikzpicture}[anchorbase,scale=.5]
      \node at (3.5,.7) {\small $D_n$};
      \node (1) at (1,0) {$\bullet$};
      \node (2) at (2,0) {$\bullet$};
      \node (d) at (3,0) {$\cdots$};
      \node (n-3) at (4,0) {$\bullet$};
      \node (n-2) at (5,0) {$\bullet$};
      \node (n-1) at (6,.5) {$\bullet$};
      \node (n) at (6,-.5) {$\bullet$};
      \draw (1.center) -- (2.center);
      \draw (n-3.center) -- (n-2.center);
      \draw (n-2.center) -- (n-1.center);      
      \draw (n-2.center) -- (n.center);
      \node [rotate=-90,right] at (n-2) {\tiny $n-2$};
      \node [right] at (n-1) {\tiny $n-1$};
      \node [right] at (n) {\tiny $n$};
      \end{tikzpicture}
  };
  \node (A) at (6,0) {
      \begin{tikzpicture}[anchorbase,scale=.5]
      \node at (2.5,.7) {\small $A_{n-1}$};
      \node (1) at (1,0) {$\bullet$};
      \node (2) at (2,0) {$\bullet$};
      \node (d) at (3,0) {$\cdots$};
      \node (n-2) at (4,0) {$\bullet$};
      \node (n-1) at (5,0) {$\bullet$};
      \draw (1.center) -- (2.center);
      \draw (n-2.center) -- (n-1.center);
      \node [rotate=-90,right] at (n-2) {\tiny $n-2$};
      \node [right] at (n-1) {\tiny $n-1$};
    \end{tikzpicture}
  };
  \draw [thick, ->] (D) -- (A);
  \node (genD) at (0,-1) {\small $\sigma_i, \;\; i \leq n-1$};
  \node (genDn) at (0,-2) {\small $\sigma_n$};
  \node (genA) at (6,-1) {\small $\sigma_i$};
  \node (genAn-1) at (6,-2) {\small $\sigma_{n-1}$};
  \draw [->, opacity=.7] (genD) -- (genA);
  \draw [->, opacity=.7] (genDn) -- (genAn-1);
\end{tikzpicture}
\]

In type $D$ again, one can use Perron-Vannier's injection of the Artin-Tits group into the MCG of a surface~\cite{PerronVannier}. Then orderability follows from Rourke and Wiest~\cite{RourkeWiest_order}, also based on curve considerations.

\begin{problem}
  Prove orderability in full generality, starting from type $E$. Even in type $D$, can one give an interpretation of the order in terms of Khovanov-Seidel's category?
\end{problem}

Approaches with closer ties to surfaces might play an interesting role here, such as~\cite{Qiu_decorated} and subsequent work.

\subsection{Curves and spherical objects}

We now go back to our goal of understanding how curves in the disks and zigzag modules relate.

\subsubsection{Correspondence}

Let us consider the type $A$ case. From a curve in the punctured disk, one can build a complex in $\calC_\Gamma$ as follows.

First we pull tight the curve, so it is in a minimal position. We can associate to each segment $(i,i+1)$ the projective indecomposable $P_i$. Starting from one end, we arbitrarily choose a path length and homological degree for the corresponding $P_i$, and then follow the curve and inductively build the corresponding complex with following local glueing rules (here we have only indicated the path-length grading):
\[
\begin{tikzpicture}[anchorbase,scale=.9,every node/.style={scale=0.9}]
  \node (A1) at (0,0) {
$    \begin{tikzpicture}[anchorbase]
      \node (A11) at (1,0) {$\bullet$};
      \node (A12) at (2,0) {$\bullet$};
      \node (A13) at (3,0) {$\bullet$};
      \node [below] at (A11) {\small $i$};
      \node [below] at (A12) {\small $i+1$};
      \node [below] at (A13) {\small $i+2$};
      \draw [red] (A11) to [out=0,in=180] (A12.north);
      \draw [blue] (A12.north) to [out=0,in=180] (A13);
    \end{tikzpicture}
    \longrightarrow \quad \left(\textcolor{red}{P_i}\rightarrow \textcolor{blue}{P_{i+1}\langle -1\rangle}\right)
    $
  };
  \node (A2) at (8,0) {
$    \begin{tikzpicture}[anchorbase]
      \node (A21) at (1,0) {$\bullet$};
      \node (A22) at (2,0) {$\bullet$};
      \node (A23) at (3,0) {$\bullet$};
      \node [above] at (A21) {\small $i$};
      \node [above] at (A22) {\small $i+1$};
      \node [above] at (A23) {\small $i+2$};
      \draw [red] (A21) to [out=0,in=180] (A22.south);
      \draw [blue] (A22.south) to [out=0,in=180] (A23);
    \end{tikzpicture}
    \longrightarrow \quad \left(\textcolor{blue}{P_{i+1}}\rightarrow \textcolor{red}{P_{i}\langle -1\rangle}\right)
    $
  };
  \node (B1) at (0,-2) {
$    \begin{tikzpicture}[anchorbase]
      \node (B11) at (1,0) {$\bullet$};
      \node (B12) at (2,0) {$\bullet$};
      \node [left] at (B11) {\small $i$};
      \node [xshift=.7cm] at (B12) {\small $i+1$};
      \draw [red] (B11.north) to [out=0,in=90] (B12.east);
      \draw [blue] (B12.east) to [out=-90,in=0] (B11.south);
    \end{tikzpicture}
    \longrightarrow \quad \left(\textcolor{red}{P_i}\rightarrow \textcolor{blue}{P_{i}\langle -2\rangle}\right)
    $
  };
  \node (B2) at (8,-2) {
$    \begin{tikzpicture}[anchorbase]
      \node (B21) at (1,0) {$\bullet$};
      \node (B22) at (2,0) {$\bullet$};
      \node [xshift=-.4cm] at (B21) {\small $i$};
      \node [right] at (B22) {\small $i+1$};
      \draw [red] (B21.west) to [out=90,in=180] (B12.north);
      \draw [blue] (B22.south) to [out=180,in=-90] (B11.west);
    \end{tikzpicture}
    \longrightarrow \quad \left(\textcolor{blue}{P_i}\rightarrow \textcolor{red}{P_{i}\langle -2\rangle}\right)
    $
  };
\end{tikzpicture}
\]

The following result makes the type $A$ situation especially easy to handle.

\begin{theorem}[\cite{IshiiUedaUehara, IshiiUehara,AdachiMizuniYang,BapatDeopurkarLicata_spherical}]
  This is a $1:1$ map, establishing a bijection between curves and spherical objects up to shift.
\end{theorem}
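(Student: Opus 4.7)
The plan is to proceed in four steps: well-definedness, braid equivariance, surjectivity, and injectivity.

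\textbf{Well-definedness and sphericality.} First I would verify that the assignment $c \mapsto X_c$ depends on initial choices only up to an overall shift. Two data enter the construction: which endpoint to start at, and the initial $\langle-\rangle\{-\}[-]$ shift placed on the first $P_i$. Different initial shifts translate to a global shift of $X_c$; traversing the curve from the other endpoint produces a dual complex, again an overall shift. The four local gluing rules are then directly checked to be invariant under the isotopy moves that preserve minimal position (in particular triangle moves that push a curve across a puncture would require a new crossing and are ruled out). I would then prove $X_c$ is spherical by a direct computation of $\HOM_{\calC_\Gamma}(X_c, X_c)$ using the explicit formulas for $\HOM(P_i, P_j)$ from the previous lemma. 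Since a minimal-position embedded curve has no self-intersections, off-diagonal contributions cancel against the differentials, and only the diagonal endomorphisms survive, summing to the expected $\C \oplus \C\langle 2\rangle\{1\}[-2]$.

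\textbf{Braid equivariance.} Next I would show $X_{\sigma_i \cdot c} \simeq \Sigma_i \otimes_{A_\Gamma} X_c$. This reduces to a local verification near the arc joining the $i$-th and $(i{+}1)$-st punctures. I would enumerate how many times the incoming curve crosses that arc and check, case by case, that applying the half Dehn twist and rerunning the gluing rules reproduces the output of formula (\ref{eq:actionsigmai}), using (\ref{eq:actionsigmaiinv}) for inverse generators. Because both operations are strictly local, equivariance propagates to any curve from this finite local check.

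\textbf{Surjectivity and injectivity.} For surjectivity I would invoke the classification result that every spherical object of $\calC_\Gamma$ lies, up to shift, in the $\B(\Gamma)$-orbit of a standard projective $P_i$. Combined with Step 2 and the transitivity of the braid action on essential arcs, this shows the map is surjective. For injectivity, suppose $X_c \simeq X_{c'}$ up to shift. Writing $c = \beta \cdot c_0$ and $c' = \beta' \cdot c_0$ for braids $\beta, \beta'$ and a standard segment $c_0$, one finds that $\beta(\beta')^{-1}$ fixes $X_{c_0}$ up to shift. A short tracking of gradings upgrades this to a genuine fixing, so Theorem~\ref{thm:KhSfaithfulness} forces $\beta(\beta')^{-1}$ to act trivially on $\calC_\Gamma$; combined with Theorem~\ref{th:IdLeavesInvariant} on the geometric side, this yields $\beta(\beta')^{-1} = \id$ and hence $c = c'$.

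\textbf{Main obstacle.} The elementary combinatorics of the zigzag algebra handles well-definedness, sphericality, and equivariance without real difficulty. The genuine obstruction is Step 3: proving that every spherical object is in the braid orbit of a $P_i$ is a classification statement, not a computation. The cited proofs proceed either via $t$-structures built from the zigzag heart (Ishii--Uehara), silting-mutation reductions (Adachi--Mizuno--Yang), or the geometry of the space of Bridgeland stability conditions (Bapat--Deopurkar--Licata), all of which require machinery well beyond the linear-algebra toolkit used to set up the action.
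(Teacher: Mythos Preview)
The paper does not prove this theorem: it is stated with citations and no argument is supplied, the text moving directly to the compatibility lemma (your Step~2, which the paper does sketch). So there is no in-paper proof to compare against, and your acknowledgment that the surjectivity step rests on a genuine classification theorem from the cited references is accurate and matches how the paper treats the result.

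That said, your injectivity argument contains a real gap. From $X_c \simeq X_{c'}$ you correctly extract that $(\beta')^{-1}\beta$ fixes $X_{c_0}$ up to shift, but you then invoke Theorem~\ref{thm:KhSfaithfulness} to conclude that $(\beta')^{-1}\beta$ acts trivially on $\calC_\Gamma$. That theorem runs the other way: it says a braid acting as the identity on \emph{all} of $\calC_\Gamma$ is trivial; it says nothing about a braid fixing a single object. The stabilizer of $P_i$ up to shift is far from trivial --- already $\sigma_i$ itself sends $P_i$ to $P_i\langle 2\rangle\{1\}[-1]$ --- so grading bookkeeping cannot upgrade ``fixes $X_{c_0}$'' to ``acts trivially''. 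The same objection applies to your appeal to Theorem~\ref{th:IdLeavesInvariant}, which again requires invariance of \emph{all} curves. What your argument actually needs is that the categorical stabilizer of $X_{c_0}$ coincides with the geometric stabilizer of the arc $c_0$, but establishing that inclusion is precisely the injectivity you are trying to prove, so the reasoning is circular. A non-circular route is available via Lemma~\ref{lem:inter}: if $X_c \simeq X_{c'}$ then $c$ and $c'$ have the same geometric intersection number with every arc in the disk, and isotopy classes of arcs are determined by this intersection data.
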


One can then check the following.
\begin{lemma}
  The action by half Dehn twists on curves and the Khovanov-Seidel action on $\calC_{A_n}$ are compatible.
\end{lemma}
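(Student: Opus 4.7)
The goal is to show that the bijection established in the preceding theorem is equivariant: for every curve $c$ and every generator $\sigma_i$, one has
$F(\sigma_i(c)) \simeq \Sigma_i \otimes_{A_\Gamma} F(c)$
in $\calC_{A_n}$, where $F$ denotes the curve-to-complex assignment given by the local rules above. Since both the braid group action on curves and the Khovanov-Seidel action factor through the generators $\sigma_i^{\pm 1}$, the general statement follows by induction on braid word length once this has been checked for a single generator and an arbitrary curve, so I would focus on the generator case.

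Next I would exploit locality on both sides. On the topological side, the half Dehn twist $\sigma_i$ acts as the identity outside a small disk $N$ around the segment $[i,i+1]$, so it only modifies $c$ inside $N$. On the algebraic side, the formulas in equation~\eqref{eq:actionsigmai} show that $\Sigma_i$ acts as the identity on each $P_j$ with $j \notin \{i-1,i,i+1\}$, and the bimodule tensor product $\Sigma_i \otimes_{A_\Gamma} -$ therefore leaves untouched all portions of $F(c)$ built out of such projectives. The problem thus reduces to matching local fragments of complexes arising from the finitely many possible local shapes of $c$ inside $N$: a piece passing over the segment $[i,i+1]$ from above, from below, a U-turn hugging puncture $i$ or puncture $i+1$, or an avoidance of $N$ altogether.

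For each such local configuration I would compute both sides directly. On the geometric side, one applies $\sigma_i$, pulls the image back to minimal position, and reads off the local fragment of the complex from the rules stated before the theorem. On the algebraic side, one applies equation~\eqref{eq:actionsigmai} summand-by-summand to the corresponding local fragment of $F(c)$ and assembles the result. A crucial observation is that since the $\Hom$-spaces between indecomposable projectives are at most one-dimensional in each multidegree, the differential in every local two-term complex is determined up to a nonzero scalar, so a matching of the underlying graded objects and of the tridegrees (homological, path-length $\langle - \rangle$, and orientation $\{-\}$) immediately upgrades to a homotopy equivalence of complexes.

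I expect the main obstacle to be not the local enumeration itself but the normalization step that precedes it. After applying $\sigma_i$, the image curve generically sits inside $N$ with a few bigons that must be removed before the curve-to-complex rules apply; on the algebraic side this corresponds to collapsing identity summands in $\Sigma_i \otimes_{A_\Gamma} F(c)$ by Gaussian elimination, precisely as in the proof of invertibility $\Sigma_i \otimes_{A_\Gamma} \Sigma_i^{-1} \simeq A_\Gamma$ given earlier in the notes. Checking that topological bigon removal and algebraic Gaussian elimination produce the same simplified complex is essentially a careful bookkeeping exercise in the three gradings across every local case. Once the compatibility for $\sigma_i$ is settled, the statement for $\sigma_i^{-1}$ follows either by a symmetric argument using equation~\eqref{eq:actionsigmaiinv} or formally from the already-established invertibility of $\Sigma_i$, completing the plan.
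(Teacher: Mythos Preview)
Your proposal is correct and follows essentially the same approach as the paper: a local case-by-case analysis in which one applies $\sigma_i$ to each possible local configuration of the curve, computes the corresponding effect of $\Sigma_i$ on the associated fragment of the complex via equation~\eqref{eq:actionsigmai}, and cancels redundant summands by Gaussian elimination. The paper's sketched proof illustrates exactly one such local case, and your outline of locality, enumeration, and the bigon/Gaussian-elimination correspondence is a faithful (and somewhat more detailed) expansion of that strategy.
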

\begin{proof}[Sketched]
  The proof is a case by case analysis. Let's try an easy example:

  \[
  \begin{tikzpicture}[anchorbase]
    \node (A) at (0,0) {
      \begin{tikzpicture}[anchorbase]
        \node (i-1) at (0,0) {$\bullet$};
        \node [yshift=-.3cm] at (i-1) {\small $i-1$};
        \node (i) at (1,0) {$\bullet$};
        \node [yshift=-.3cm] at (i) {\small $i$};
        \node (i+1) at (2,0) {$\bullet$};
        \node [yshift=-.3cm] at (i+1) {\small $i+1$};
        \node (i+2) at (3,0) {$\bullet$};
        \node [yshift=-.3cm] at (i+2) {\small $i+2$};
        \draw (i-1.north) to [out=10,in=170] (i+2.north);
      \end{tikzpicture}
    };
    \node (B) at (0,-2) {
            \begin{tikzpicture}[anchorbase]
        \node (i-1) at (0,0) {$\bullet$};
        \node [yshift=-.3cm] at (i-1) {\small $i-1$};
        \node (i) at (1,0) {$\bullet$};
        \node [yshift=-.3cm] at (i) {\small $i$};
        \node (i+1) at (2,0) {$\bullet$};
        \node [yshift=-.3cm] at (i+1) {\small $i+1$};
        \node (i+2) at (3,0) {$\bullet$};
        \node [yshift=-.3cm] at (i+2) {\small $i+2$};
        \draw (i-1.north) to [out=10,in=170] (i+2.north);
      \end{tikzpicture}
    };
    \draw[->] (A) -- (B) node[midway,xshift=.3cm] {$\sigma_i$};
    \node (C) at (6,0) {
      \begin{tikzpicture}[anchorbase]
        \node (O1) at (0,0) {\small $P_{i-1}\langle 1\rangle$};
        \node (O2) at (1.5,0) {\small $P_{i}$};
        \node (O3) at (3,0) {\small $P_{i+1}\langle -1\rangle$};
        \draw [->] (O1) -- (O2);
        \draw [->] (O2) -- (O3);
      \end{tikzpicture}
    };
    \draw [<->,dotted] (A) -- (C);
        \node (D) at (6,-2) {
      \begin{tikzpicture}[anchorbase]
        \node (O1) at (0,0) {\small $P_{i-1}\langle 1\rangle$};
        \node (O3) at (3,0) {\small $P_{i+1}\langle -1\rangle$};
        \node (sO1) at (-1.5,-1) {\small $P_i\langle 2\rangle$};
        \node (sO2) at (0,-1) {\small $P_i\langle 2\rangle$};
        \node (sO3) at (1.5,-1) {\small $P_i$};
        \node at (0,-.5) {\small $\oplus$};
        \draw [->] (O1) -- (sO3);
        \draw [->] (sO1) -- (O1);
        \draw [->] (sO3) -- (O3);
        \draw [->] (sO1) -- (sO2) node [midway,above,red] {\tiny $\id$};
        \draw [->] (sO2) -- (sO3);
        \draw [red,thick] (sO1.south west) -- (sO1.north east);
        \draw [red,thick] (sO2.south west) -- (sO2.north east);
      \end{tikzpicture}
    };
    \draw[->] (C) -- (D) node[midway,xshift=.3cm] {$\sigma_i$};
    \draw [<->,dotted] (B) -- (D);
  \end{tikzpicture}
  \]

\end{proof}

There also is a correspondence between the geometric intersection between curves and the dimension of the $\Hom$-space.

\begin{lemma}\label{lem:inter}
Let $C$ and $D$ be spherical objects in $\calC_{A_n}$. Then $\dim(\Hom_{\calC}(C,D))$ is given by twice the number of intersection points between curve representatives in minimal position. Intersections at origin count just for one.
\end{lemma}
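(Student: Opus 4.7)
My plan is to exploit invariance under the braid group action to reduce to a base case where the target curve is standard, and then handle the residual combinatorial content explicitly. Both sides of the claimed equality are invariant under the diagonal action $(C,D) \mapsto (\Sigma_j C, \Sigma_j D)$: on the categorical side, each $\Sigma_j$ is an autoequivalence of $\calC_{A_n}$, so $\dim \HOM(\Sigma_j C, \Sigma_j D) = \dim \HOM(C,D)$; on the geometric side, the corresponding half Dehn twist acts as a homeomorphism of the disk fixing the boundary pointwise (and hence fixing the origin), which preserves geometric intersection numbers of curves in minimal position together with the distinction between origin and non-origin intersections.

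\textbf{Reduction to a standard target.} The mapping class group of the punctured disk acts transitively on isotopy classes of essential simple arcs of each combinatorial type. I would therefore choose a braid $\beta$ such that $\beta D$ is isotopic to a standard arc from Figure~\ref{fig:curves1}, typically the arc $a_i$ between marked points $i$ and $i+1$, which corresponds to the spherical object $P_i$ up to a grading shift that does not affect the dimension count. Combined with the invariance above, this reduces the problem to computing $\dim \HOM(C, P_i)$ and matching it against the intersections of the underlying curve with $a_i$.

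\textbf{Verification of the base case.} Writing $C$ as a complex $\bigoplus_\ell P_{k_\ell}$ (grading and homological shifts suppressed), one has $\HOM(C, P_i) = \bigoplus_\ell \HOM(P_{k_\ell}, P_i)$ as a graded vector space, with differential induced from $d_C$. The dimension formula established earlier gives $\dim \HOM(P_j, P_i) = 2$ if $j = i$, $1$ if $|j-i| = 1$, and $0$ otherwise, so the naive total dimension is $2 n_i + n_{i-1} + n_{i+1}$, where $n_k$ counts $P_k$-summands in $C$. Geometrically, in minimal position each segment of $C$ labelled $i$ contributes two marked-point intersections with $a_i$ (the two endpoints of the shared segment), each segment labelled $i \pm 1$ contributes one, and other segments contribute nothing; the totals agree summand by summand. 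The boundary-arc case is handled identically, the origin convention matching the fact that the shared boundary endpoint contributes a single geometric intersection point.

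\textbf{Main obstacle.} The delicate step is to show that the differential induced on $\HOM(C, P_i)$ from $d_C$ actually vanishes, so that the cohomological dimension coincides with the naive count above. A potentially nonzero component arises from an arrow $P_k \to P_{k'}$ inside $C$ with both $k, k' \in \{i-1, i, i+1\}$ composing non-trivially in the zig-zag algebra; unpacking the curve-to-complex dictionary, such a configuration corresponds to a length-two subarc of $C$ bounding a bigon with $a_i$, which is precisely what minimal position forbids. Equivalently and more conceptually, the assignment from curves to complexes sends minimal curves to minimal complexes of projectives, so no further Gaussian elimination can reduce $\HOM(C, P_i)$. Formalizing this bigon-to-differential dictionary cleanly, perhaps by induction on elementary local moves that reduce $C$ to minimal position, is where I expect the main technical effort to lie.
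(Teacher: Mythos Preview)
Your reduction to $D=P_i$ via the braid action is sound, and the chain-level count $2n_i+n_{i-1}+n_{i+1}$ for the underlying graded vector space of $\HOM(C,P_i)$ is correct. The claim that the induced differential vanishes, however, is false, and your bigon justification misfires. Take the minimal complex $C=(P_i\to P_{i+1}\langle -1\rangle)$, corresponding to the arc from puncture $i$ over puncture $i+1$ to puncture $i+2$; this curve is in minimal position with $a_i$ and bounds no bigon with it. Your naive count gives $2\cdot 1+0+1=3$. But the Hom complex is $\HOM(P_{i+1},P_i)\to\HOM(P_i,P_i)$, and the differential sends $(i{+}1|i)$ to the loop $x_i$ (precomposition with $d_C=(i|i{+}1)$ is right multiplication by $(i|i{+}1)(i{+}1|i)=x_i$), which is injective; the cohomology has dimension $1$, not $3$. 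Geometrically the curve meets $a_i$ only at the shared endpoint $i$, also giving $1$. So the lemma holds here, but not because the differential vanishes.

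The matching error is in your geometric bookkeeping: a ``segment of $C$ labelled $j$'' is not the arc $a_j$ and need not meet $a_i$ at any puncture. In the example, the segment labelled $i$ touches puncture $i$ but passes \emph{over} $i+1$, and the segment labelled $i+1$ touches neither; the honest count is $1$. The arrow $P_i\to P_{i+1}$ records that the curve passes above $i+1$, not a bigon with $a_i$, so minimal position does not exclude it. The paper simply defers to Khovanov--Seidel's explicit computation: one puts $C$ in normal form relative to the basic arcs and computes the cohomology of $\HOM(C,P_i)$ directly, checking that the nonzero differentials cancel precisely those chain-level generators not accounted for by geometric intersections. To salvage your outline you would need to carry out that cohomology computation rather than argue it away.
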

\begin{proof}
This is an explicit count (see \cite{KhS}[Section 4c]).
\end{proof}

\subsubsection{Spherical twists, again} \label{sec:sphTwAgain}

We had seen that the spherical twist on $P_i$ is the mapping cone of the multiplication map from $P_i\otimes Q_i$ to $A_\Gamma$. Given a curve $c$, it can be associated an object $C=\beta P_i$ for some $i$ and some braid $\beta$. Denote $C^\vee=Q_i \beta$ (for the right action). Then the half Dehn twist on $c$ matches the spherical twist on $C$:
\[
T_C=C\otimes C^\vee \rightarrow A_\Gamma,
\]
and this doesn't depend on $\beta$ nor $i$.

With these constructions in hand, we can revisit Bigelow's argument to find elements in the kernel.

\begin{proof}[Proof of Theorem~\ref{thm:BurauKernel}]
  Bigelow's proof consists in exhibiting a pair of curves in the punctured plane that do intersect, but so that the corresponding elements in $V$ have trivial pairing. The search for such pairs is done by computer.

  Let's look at the construction of the counterexample in the case where both curves relate punctures in the interior of the disk (Bigelow actually allows one of them to have an end on the boundary, which we can also understand categorically, but the construction is slightly trickier). So we have $\gamma_1$ and $\gamma_2$ two curves with trivial pairing, which means that there are associated spherical objects $A_1$ and $A_2$:
  \[
A_2^{\vee}\otimes A_1\neq 0\quad \text{but} \quad \qdim(A_2^{\vee}\otimes A_1)=0
\]
Looking at the spherical twists:
\[
T_{A_1}\otimes T_{A_2}=
    \begin{tikzpicture}[anchorbase]
      \node (0) at (0,0) {\small $\left(A_1\otimes A_2^{\vee}\right)^{\oplus_{A_1^\vee\otimes A_2}}$};
      \node (1+) at (3,1) {$A_1\otimes A_1^\vee$};
      \node (1-) at (3,-1) {\small $A_2\otimes A_2^\vee$};
      \node (2) at (6,0) {\small $A$};
      \draw [->] (0)  to node [midway,above, rotate=18] {\tiny} (1+);
      \draw [->] (0) to node [midway, below,rotate=-18] {\tiny}(1-);
      \draw [->] (1+) to node [midway, above, rotate=-18] {\tiny} (2);
      \draw [->] (1-) to node [midway, below,xshift=.5cm, rotate=18] {\tiny } (2);
    \end{tikzpicture}
\]
\[
T_{A_2}\otimes T_{A_1}=
    \begin{tikzpicture}[anchorbase]
      \node (0) at (0,0) {\small $\left(A_2\otimes A_1^{\vee}\right)^{\oplus_{A_2^\vee\otimes A_1}}$};
      \node (1+) at (3,1) {$A_1\otimes A_1^\vee$};
      \node (1-) at (3,-1) {\small $A_2\otimes A_2^\vee$};
      \node (2) at (6,0) {\small $A$};
      \draw [->] (0)  to node [midway,above, rotate=18] {\tiny} (1+);
      \draw [->] (0) to node [midway, below,rotate=-18] {\tiny}(1-);
      \draw [->] (1+) to node [midway, above, rotate=-18] {\tiny} (2);
      \draw [->] (1-) to node [midway, below,xshift=.5cm, rotate=18] {\tiny } (2);
    \end{tikzpicture}
    \]

    The two twists should differ in general, but they decategorify to the same thing.
\end{proof}

\subsubsection{Orderability}

Here I'll try to reinterpret the curve condition for orderability in terms of spherical objects in Khovanov-Seidel's category. Somehow, the trouble is that in type $A$, they are many ways to do so, but none of them seems to extend on the nose to an orderability criterion outside of type $A$. Still, there is one involving exceptional collections that I like better than others.

We will first extend a bit the algebraic situation so as to be able to have curves touching a point on the boundary, just like in Figure~\ref{fig:curves1}. To do so, embed the type $A_n$ Dynkin diagram into one of type $A_{n+1}$, with the extra node labeled $0$ placed on the left, and add a ``basing'' quadratic relation in the zig-zag algebra:
\[
\begin{tikzpicture}[anchorbase]
  \node (0) at (0,0) {$\bullet$};
  \node [yshift=.3cm] at (0) {\small $0$};
  \node (1) at (1,0) {$\bullet$};
  \node [yshift=.3cm] at (1) {\small $1$};
  \node (2) at (2,0) {$\bullet$};
  \node [yshift=.3cm] at (2) {\small $2$};
  \node (3) at (3,0) {$\bullet$};
  \node [yshift=.3cm] at (3) {\small $3$};
  \draw (0.center) -- (1.center);
  \draw (1.center) -- (2.center);
  \draw (2.center) -- (3.center);
  \node [xshift=.5cm] at (3) {$\cdots$};
\end{tikzpicture}
\qquad \text{with} \quad
\begin{tikzpicture}[anchorbase]
  \node (0) at (0,0) {$\bullet$};
  \node [yshift=.3cm] at (0) {\small $0$};
  \node [yshift=-.3cm] at (0) {\vphantom{$0$}};
  \node (1) at (1,0) {$\bullet$};
  \node [yshift=.3cm] at (1) {\small $1$};
  \draw [->] (.15,.05) to [out=20,in=140] (.9,0) to [out=-140,in=-20] (.15,-.05); 
\end{tikzpicture}
=0
\]

We can then consider the following {\it exceptional collection} of objects (the terminology comes from algebraic geometry, see for example the introduction in~\cite{Bondal_ex}):
\[
(P_0,P_0\rightarrow P_1\langle -1\rangle,P_0\rightarrow P_1\langle -1\rangle \rightarrow P_2\langle -2\rangle,\cdots)
\quad \leftrightarrow \quad
\begin{tikzpicture}[anchorbase,scale=.8]
  % Disk
  \draw (0,0) ellipse (2 and 1);
  \node (0) at (-2,0) {};
  \node (1) at (-1,0) {$\bullet$};
  \node (2) at (0,0) {$\bullet$};
  \node (3) at (1,0) {$\bullet$};
  \node at (1.5,0) {$\cdots$};
  \draw [semithick] (0.center) -- (1.center);
  \draw [semithick] (0.center) to [out=20,in=160]  (2.center);
  \draw [semithick] (0.center) to [out=40,in=140]  (3.center);  
\end{tikzpicture}
\]

The key points here are that these objects, except of being spherical, are exceptional, with $\END$-space of dimension $1$ concentrated in degree zero, and furthermore there is a breach of symmetry:
\begin{gather*}
  \HOM(P_0,P_0\rightarrow P_1\langle -1\rangle)=\C \id_{P_0} \\
  \HOM(P_0\rightarrow P_1\langle -1\rangle,P_0)=\{0\}
\end{gather*}

Now, let's assume that $\beta$ moves the curve corresponding to $P_0$. Then either $\beta P_0$ will map into $P_0$, or it will be mapped into by $P_0$. Indeed, $\beta P_0$ contains a single copy of $P_0$, and looking at the curve/object correspondence, it appears that exactly one of the two maps between $P_0$ and $\beta P_0$ given by an identity of $P_0$ (one direction or the other) will define a non-trivial chain map. Whether we fall in one or the other situation exactly captures the fact that the curve goes up or down. See the LHS of Figure~\ref{fig:Excep_order} for an example.

In type $D$, one can try to copy this construction. But it appears that there are objects (spherical or exceptional) that are moved but seem to go neither up nor down: see the RHS of Figure~\ref{fig:Excep_order}.

\begin{figure}[!h]
\[
\begin{tikzpicture}[anchorbase]
  \node (disk) at (0,0) {
  \begin{tikzpicture}[anchorbase,scale=.6]
  % Disk
  \draw (0,0) ellipse (2 and 1);
  \node (0) at (-2,0) {};
  \node (1) at (-1,0) {$\bullet$};
  \node (2) at (0,0) {$\bullet$};
  \node (3) at (1,0) {$\bullet$};
  \draw [semithick,blue] (0.center) -- (1.center);
  \draw [semithick,red] (0.center) to [out=20,in=180]  (1.north) to [out=0,in=180] (2.south) to [out=0,in=-160] (3.center);
\end{tikzpicture}
  };
  \node (excepA) at (0,-3) {
    \begin{tikzpicture}[anchorbase]
      \node [blue] (P0t) at (0,0) {\small $P_0$};
      \node [red] (P0) at (0,-2) {\small $P_0$};
      \node [red] (P1) at (2,-2.5) {\small $P_1\langle -1\rangle$};
      \node [red] (P2) at (0,-3) {\small $P_2$};
      \node [red] at (0,-2.5) {\small $\oplus$};
      \draw [red,->] (P0) -- (P1);
      \draw [red,->] (P2) -- (P1);
      \draw [green,->] (P0.north)+(.1,0) -- (.1,-.3) node [midway,xshift=.2cm] {\small $\id$};
      \draw [opacity=.4,->] (-.2,-.3) -- (-.2,-1.7)  node [midway, xshift=-.2cm] (ID) {\small $\id$}   ;
      \draw [thick] (ID.south west) -- (ID.north east);
      \draw [thick] (ID.north west) -- (ID.south east);
    \end{tikzpicture}
  };
  \node (DynkinD) at (6,0) {
    \begin{tikzpicture}[anchorbase,scale=.5]
      \node (1) at (4,0) {$\bullet$};
      \node (2) at (5,0) {$\bullet$};
      \node (3) at (6,.5) {$\bullet$};
      \node (4) at (6,-.5) {$\bullet$};
      \draw (1.center) -- (2.center);
      \draw (2.center) -- (3.center);
      \draw (2.center) -- (4.center);      
    \end{tikzpicture}
  };
  \node (excepD) at (6,-3) {
    \begin{tikzpicture}[anchorbase]
      \node [blue] (P2t) at (0,0) {\small $P_2$};
      \node [red] (P3) at (-2,-2) {\small $P_3\langle 1\rangle$};
      \node [red] (P2) at (0,-2) {\small $P_2$};
      \node [red] (P4) at (2,-2) {\small $P_4\langle-1\rangle$};
      \draw [red,->] (P3) -- (P2);
      \draw [red,->] (P2) -- (P4);
      \draw [opacity=.4,->] (-.2,-.3) -- (-.2,-1.7)  node [midway, xshift=-.2cm] (ID) {\small $\id$}   ;
      \draw [thick] (ID.south west) -- (ID.north east);
      \draw [thick] (ID.north west) -- (ID.south east);
      \draw [opacity=.4,<-] (.2,-.3) -- (.2,-1.7)  node [midway, xshift=.2cm] (ID2) {\small $\id$}   ;
      \draw [thick] (ID2.south west) -- (ID2.north east);
      \draw [thick] (ID2.north west) -- (ID2.south east);
      \draw [->] (3,-.2) to [out=-30,in=30] (3,-1.8);
      \node [rotate=90] at (3.7,-1) {\small $\sigma_3\sigma_4^{-1}$};
    \end{tikzpicture}
  };
\end{tikzpicture}
  \]
  \caption{Going up or down}
  \label{fig:Excep_order}
\end{figure}
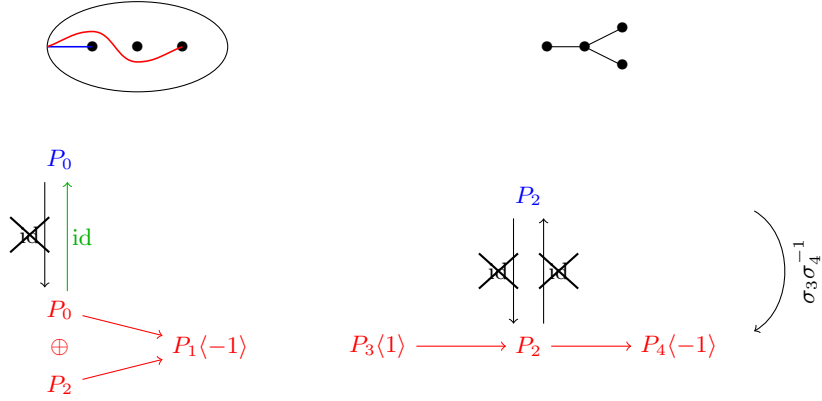

\subsection{Linear complexes}

Here we want to introduce notions that will help us slice any object into simpler pieces. This can be achieved in several ways: we'll focus on two of them. The notions we introduce here make sense in any type, but classifications results are mostly obtained in finite type.

\begin{definition}
  A complex in $\calC_\Gamma$ will be said to be linear for the path length grading if it is homotopy equivalent to one only containing terms of the kind $P_i[k]\langle -k\rangle$ for varying values of $k$ (homological and path-length shifts balance).

  A complex in $\calC_\Gamma$ will be said to be linear for the orientation grading if it is homotopy equivalent to one only containing terms of the kind $P_i[k]\{0\}$ for varying values of $k$ (orientation grading of degree zero).
\end{definition}

For the second definition, we sometimes refer to it as {\it dual} linearity. Indeed, this is related to Birman-Ko-Lee's generating set.

\begin{example}

  In type $A$, the object associated to the following curve is linear for the path-length grading:
  \[
  \begin{tikzpicture}[anchorbase,scale=.6]
  % Disk
  \draw (0,0) ellipse (3 and 2);
  \node (1) at (-2,0) {$\bullet$};
  \node (2) at (-1,0) {$\bullet$};
  \node (3) at (0,0) {$\bullet$};
  \node (4) at (1,0) {$\bullet$};
  \node (5) at (2,0) {$\bullet$};
  \draw [semithick] (1.center) to [out=45,in=180]  (2.north) to [out=0,in=180] (3.south) to [out=0,in=180] (4.north) to [out=0,in=135] (5.center);
  \end{tikzpicture}
  \]
  Linear pieces are those with no vertical tangencies.

  On the dual side, linear pieces are those that stay above all points at all time:
  \[
  \begin{tikzpicture}[anchorbase,scale=.6]
  % Disk
  \draw (0,0) ellipse (3 and 2);
  \node (1) at (-2,0) {$\bullet$};
  \node (2) at (-1,0) {$\bullet$};
  \node (3) at (0,0) {$\bullet$};
  \node (4) at (1,0) {$\bullet$};
  \node (5) at (2,0) {$\bullet$};
  \draw [semithick] (1.center) to [out=45,in=135] (4.center);
  \end{tikzpicture}
  \]

\end{example}

On the dual side, there is an easy classification of linear complexes up to homological shift.

\begin{theorem}[\cite{LQ}, based on Gabriel's theorem]
  Indecomposable dual-linear spherical objects are in bijection with the set of positive roots.

  Furthermore, if $C$ is spherical, dual linear and indecomposable with corresponding root $\alpha$, then the spherical twist $T_C=T_{\alpha}$ (the ones from Section~\ref{sec:dualgen}).
\end{theorem}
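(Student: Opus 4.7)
The plan is to identify dual-linear complexes with the derived category of the oriented Dynkin quiver and apply Gabriel's theorem, then match spherical twists with dual generators through a braid-equivariance argument.

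First I would compute the orientation-degree-zero part of $\HOM_{\calC_\Gamma}(P_i,P_j)$ and observe that it is one-dimensional exactly when $i=j$ (the identity) or when there is an arrow $i\to j$ in the chosen orientation (a length-one path in the preferred direction), since loops $x_i$ have orientation degree one and length-two paths vanish by the zig-zag relations. Writing $B$ for the path algebra of the oriented Dynkin quiver obtained by keeping only the preferred-direction arrows, the additive subcategory of $\calC_\Gamma$ spanned by the $P_i\{0\}$ with orientation-degree-zero morphisms is therefore equivalent to $B\text{-proj}$. Dual-linear complexes modulo homotopy are hence described by $K^b(B\text{-proj})\simeq D^b(B\text{-mod})$, the last equivalence using that $B$ is hereditary.

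Gabriel's theorem then provides a bijection between indecomposable $B$-modules and positive roots of $\Gamma$ via dimension vectors; by hereditariness, indecomposables of $D^b(B\text{-mod})$ are simply shifts of indecomposable $B$-modules, which gives the sought bijection between indecomposable dual-linear objects (up to homological shift) and positive roots. Sphericalness of the resulting objects $C_\alpha\in\calC_\Gamma$ must still be verified by computing $\END_{\calC_\Gamma}(C_\alpha)$: the rigidity of the indecomposable $M_\alpha$ over the hereditary algebra $B$ supplies the identity summand, while the loops $x_i$ present in $A_\Gamma$ but not in $B$ produce the dual class in tri-degree $(2,1,-1)$, assembling the desired spherical shape $\C\oplus\C\langle 2\rangle\{1\}[-1]$ (this is where the $2$-Calabi-Yau flavor of $\calC_\Gamma$ is doing the work).

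For the identification of spherical twists, I would exploit the covariance $T_{\beta\cdot P_i}\simeq \beta\,\Sigma_i\,\beta^{-1}$ which is immediate from the cone definition of $\Sigma_i$. Since each dual generator admits the shape $T_\alpha=\beta_\alpha\sigma_{i(\alpha)}\beta_\alpha^{-1}$ for an explicit prefix $\beta_\alpha$ of the Coxeter lift $\gamma$ (cf.\ Section~\ref{sec:dualgen}), it suffices to exhibit an isomorphism $C_\alpha\simeq\beta_\alpha\cdot P_{i(\alpha)}$ in $\calC_\Gamma$. I would establish this by induction on the height of $\alpha$: simple roots give $P_i$ directly, and the inductive step reduces to computing $\Sigma_j$ applied to $C_{\alpha'}$ using Equation~\eqref{eq:actionsigmai} and checking that the dimension vector undergoes exactly the reflection $s_j(\alpha')$.

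The main obstacle is this last matching. In type $A$ the curve picture makes each step geometrically transparent, but in types $D$ and $E$ one must align two combinatorial recursions generating the positive roots: the Auslander--Reiten mutations inside $B\text{-mod}$, and the Birman--Ko--Lee dual Garside recursion inside $[1,\gamma]$. I expect the cleanest route is to recognize both as indexed by the noncrossing partition lattice associated with $\gamma$, which provides a common labelling of positive roots by prefixes of the Coxeter element and forces the identification $T_{C_\alpha}=T_\alpha$ simultaneously with the bijection of Step two.
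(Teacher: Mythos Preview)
Your overall strategy---reduce to the representation theory of the oriented Dynkin quiver and invoke Gabriel---is exactly the intended one, and your treatment of the spherical-twist identification via conjugation covariance is fine. But there is a genuine gap in the first step.

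The claim that the additive category generated by the $P_i\{0\}$ with orientation-degree-zero morphisms is equivalent to $B\text{-proj}$ for $B=\Bbbk Q$ is false. Take $i\to j\to k$ in the chosen orientation: in $B\text{-proj}$ the composite path gives a nonzero morphism between the projectives at $i$ and $k$, whereas in $A_\Gamma$ the zig-zag relation forces $(i|j)(j|k)=0$, so the orientation-degree-zero part of $\HOM_{\calC_\Gamma}(P_i,P_k)$ vanishes. The degree-zero subcategory you describe is the projective category of $\Bbbk Q/\mathrm{rad}^2$, not of $\Bbbk Q$, and the passage ``$K^b(B\text{-proj})\simeq D^b(B\text{-mod})$'' does not apply to it.

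The fix is that the $P_i$ correspond to the \emph{simple} $\Bbbk Q$-modules, not the projectives. Concretely: since $Q$ is a tree, there is a height function $h$ on vertices with $h(j)=h(i)+1$ whenever $i\to j$. A minimal dual-linear complex (differential entries only along arrows, no identity components) then splits as a direct sum over the offset $r=(\text{homological degree of }P_i)-h(i)$, because the differential preserves $r$. An indecomposable therefore lives at a single offset and is encoded by multiplicity spaces $V_i$ together with linear maps $V_i\to V_j$ along each arrow; the condition $d^2=0$ is automatic precisely because length-two directed paths vanish in $A_\Gamma$. This is exactly the data of a $\Bbbk Q$-representation, and now Gabriel's theorem yields the bijection with positive roots. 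Your sphericalness check and the $T_C=T_\alpha$ argument can then proceed as you outlined.
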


\begin{example}
  In type $A_2$,
  \[
  \alpha=\alpha_1+\alpha_2 \;\leftrightarrow \;
  \begin{tikzpicture}[anchorbase]
    \node (1) at (0,0) {$\bullet$};
    \node (2) at (1,0) {$\bullet$};
    \node (3) at (2,0) {$\bullet$};
    \draw[semithick] (1.center) to [out=45,in=135] (3.center);
  \end{tikzpicture}
  \;\leftrightarrow \; P_1\rightarrow P_2\langle -1\rangle
\]
and indeed:
\[
\sigma_1\sigma_2\sigma_1^{-1}\left(P_1\rightarrow P_2\right) = \sigma_1\sigma_2(P_2) =\sigma_1\left(P_2 \{1\}\right) =P_1\{1\} \rightarrow P_2\{1\}
\]
\end{example}

One can also consider shifts of linear complexes, which have all their terms of the kind $P_i[k]\langle -k+r\rangle$ (respectively, $\{r\}$), for a fixed value of $r$. This allows to cut a complex into pieces, which is a well-defined process if one starts from a reduced complex. It is then interesting to measure how much a given braid will make linear complexes become non-linear. This is the idea behind the following definition.

\begin{definition}
  Let $\calC^0$ be the subcategory of linear complexes (for whichever grading), $\calC^r$ its $r$-shift, and $\calC^{[r,s]}$ the subcategory whose all complexes have all their terms between degree $r$ and $s$.

  A braid $\beta$ gets assigned a {\it spread} as follows:
\begin{gather*}
  sp(\beta)=r+s+1 \\
  r=min\{ l\in \N,\; \beta \calC^0\subset \calC^{\geq -l}  \} \\
   s=min\{ l\in \N,\; \beta \calC^0\subset \calC^{\leq l}  \} 
  \end{gather*}
\end{definition}

The following theorem is the main result in a joint work with Anthony Licata, showing that some metrics on the braid group have a natural categorical interpretation.

\begin{theorem}[{\cite[Theorems 4.1, 4.12]{LQ}}] \label{thm:LQ_spread}
  The classical spread equals the word length in generators from $[1,\Delta]\cup [1,\Delta]^{-1}$.

The dual spread equals the word length in generators from $[1,\gamma]\cup [1,\gamma]^{-1}$.
\end{theorem}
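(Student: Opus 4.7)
The plan is to establish both equalities by matching upper and lower bounds on the spread, handling the classical and dual cases in parallel since they follow the same outline with the roles of the two gradings and the corresponding classifications of linear objects exchanged.

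For the upper bound, I would first show that each simple Garside generator $g\in [1,\Delta]$ (respectively $g\in [1,\gamma]$), and its inverse, has spread exactly $2$: for any projective $P_j$ the object $g\cdot P_j$ is a linear complex (for the path-length grading, respectively for the orientation grading) whose homological degrees span an interval of length at most one. The content is that although $g=\sigma_{i_1}\cdots \sigma_{i_l}$ may be a product of $l$ standard generators each potentially shifting by one homological unit, reducedness of the underlying Weyl/BKL expression forces the bimodule tensor product $\Sigma_{i_1}\otimes_{A_\Gamma}\cdots\otimes_{A_\Gamma}\Sigma_{i_l}$ to collapse, via Gaussian elimination using the zig-zag relations, to a two-term linear complex. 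Granting this, a straightforward induction on Garside length gives that if $\beta=g_1^{\epsilon_1}\cdots g_n^{\epsilon_n}$ with $\epsilon_i\in\{\pm 1\}$, then $\beta\calC^0\subseteq \calC^{[-n_+,n_-]}$ where $n_\pm$ counts the signs; hence $sp(\beta)\leq n+1$.

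For the lower bound, the aim is to exhibit, for each $\beta$ whose Garside normal form has length $n$, an explicit linear object $X\in \calC^0$ such that $\beta\cdot X$ spans exactly $n+1$ consecutive shifts. On the dual side this is especially natural: by the classification quoted in the excerpt, dual-linear indecomposable spherical objects are in bijection with positive roots, which is also the indexing set of the dual Garside simples $T_\alpha$. Choosing $X$ adversarially with respect to the leading BKL syllable of $\beta$ and tracking successive syllables, each added syllable contributes exactly one new shift class to the support of the image. On the classical side one can either proceed via the analogous descent combinatorics using the simple projectives $P_i$, or, in type $A$, transport the count to the geometric side through the curve correspondence and Lemma~\ref{lem:inter}, reading off spread from tangency patterns of pulled-tight curves.

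The principal obstacle is the \emph{collapsing lemma} for simple Garside elements: showing that $g\cdot P_j$ is linear of width at most two, uniformly in $\ell(g)$. In type $A$, for both gradings, this is essentially a geometric consequence of Theorem~\ref{th:IdLeavesInvariant} combined with the curve/object correspondence, since lifts of Weyl or BKL elements act on the relevant families of curves by homeomorphisms that preserve the linearity of the matching spherical objects. For general simply-laced finite type, the cleanest route is a descent argument in the spirit of Brav--Thomas~\cite{BravThomas}, characterizing positive Garside lifts by their effect on the simple projectives and exploiting the Garside normal form on both sides of the equality. Once this collapsing is in hand, the induction yielding the upper bound and the adversarial construction witnessing the lower bound are essentially formal.
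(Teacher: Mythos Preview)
Your outline is essentially the paper's: both argue via Garside theory, first showing that a single Garside simple (positive lift from the Weyl group, respectively a BKL generator) moves linear objects by at most one slot, and then reading off the Garside length from how far a well-chosen object gets pushed. The paper packages the latter through the functions $E_r^{\pm}$, which record exactly which $P_i$'s sit in the extremal layer; Proposition~4.13 and Lemma~4.17 in \cite{LQ} say that this extremal set equals the left descent set of the corresponding Weyl element, and then propagate this through the right-greedy normal form. Your ``adversarial object tracking successive syllables'' is the same mechanism.

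Two points where your write-up is looser than the actual argument. First, your collapsing lemma is misphrased: the bimodule $\Sigma_{i_1}\otimes_{A_\Gamma}\cdots\otimes_{A_\Gamma}\Sigma_{i_l}$ for a Garside simple does \emph{not} reduce to a two-term complex of bimodules; what is true is that its \emph{action} on any linear object lands in $\calC^{[0,1]}$ (and dually for inverses). The proof of this, in the paper's language, is exactly the identification $E_1^+(\beta\cdot\oplus P_r)=D_L(w)$ for a negative lift $\beta$ --- one does not simplify the bimodule, one controls the output. Second, and more seriously, the lower bound is not ``essentially formal'' once the collapsing is known. For a word with both positive and negative Garside letters, one must rule out that the layers produced by the positive part get cancelled when the negative part acts; the paper flags this explicitly (``one has to make sure that what jumped up won't jump down''). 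This step genuinely uses the descent-reading refinement, not merely the spread bound of a single simple: knowing \emph{which} $P_i$'s sit at the top lets you check that a subsequent negative simple cannot kill them, because its descent set is disjoint from what is already there by greediness of the normal form. Your sketch gestures at descent combinatorics but does not isolate this non-cancellation step, which is where most of the work lies.
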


The key ingredient in this theorem is Garside theory: we show that one can read off the descents of a word by the top or bottom layers of its images. Indeed, if $C$ is a linear complex, it is quite clear that the only thing that can jump out of the linear part is a direct sum of $P_i$'s: this follows from the equation \eqref{eq:actionsigmai}. There is some sort of reciprocal to this result. It is easy to state for a braid which is a (positive or negative) lift from the Weyl group. To give a more precise sense to ``things that jump out'', let us first state the following definitions.

Introduce $\mathfrak{P}_i:=\bigoplus_k P_i\langle -k\rangle [k]$, and define:
\begin{gather*}
  E_r^+(X)=\{i \;:\; \Hom(\mathfrak{P}_i\langle -r\rangle ,X)\neq 0\} \\
  E_r^-(X)=\{i \;:\; \Hom(X,\mathfrak{P}_i\langle -r\rangle )\neq 0\}
  \end{gather*}

\begin{proposition}[{\cite[Proposition 4.13]{LQ}}]
  Let $\Gamma$ be of type ADE and $\beta$ be a negative lift from the Weyl group.  Then:
  $E_1^+(\beta\oplus P_r)=D_L(w)$.
\end{proposition}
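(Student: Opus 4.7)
The plan is to proceed by induction on $\ell(w)$, relying on the explicit formula \eqref{eq:actionsigmaiinv} and the fact that a pure negative lift cannot mix with positive contributions, so the resulting complex has differentials going strictly ``upward'' in homological degree. The base case $w=e$ is trivial: $\beta=\mathrm{id}$, the complex $\beta(\bigoplus_r P_r)=\bigoplus_r P_r$ is linear, and both sides are empty.

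For the inductive step and the inclusion $D_L(w)\subseteq E_1^+$, I would fix $i\in D_L(w)$ and write $w=s_iw'$ reduced, giving $\beta=\sigma_i^{-1}\beta'$ with $\beta'$ the negative lift of $w'$. The goal is to locate some $r$ for which a copy of $P_i\langle -1\rangle$ survives in homological degree $1$ of $\beta(P_r)$ after reduction. Applying $\sigma_i^{-1}$ termwise to $\beta'(P_r)$ using \eqref{eq:actionsigmaiinv}, the mechanism is: summands $P_j$ in the top layer of $\beta'(P_r)$ with $j$ adjacent to $i$ produce a fresh $P_i\langle -1\rangle$ sitting one homological step above their layer, i.e. in $\calC^1$; summands $P_i$ themselves get sent to $P_i\langle -2\rangle[1]\in\calC^{-1}$, so they contribute nothing to $\calC^1$; unrelated summands $P_k$ are unchanged. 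One then picks $r=i$: since $s_i\notin$ reduced factors of $w'$ implies no interior $P_i$ summand can serve as a cancellation partner, a $P_i\langle -1\rangle$ in layer $1$ must persist. For the reverse inclusion $E_1^+\subseteq D_L(w)$, I would argue contrapositively: if $i\notin D_L(w)$, choose a reduced expression for $w$ not starting with $s_i$, so $\beta=\tilde\beta\sigma_j^{-1}$ with $j\ne i$ only appearing interior; the same termwise analysis shows that any $P_i$ in layer $1$ would have to come paired with a $P_i$ in layer $0$ of some subcomplex, allowing a Gaussian elimination that removes it.

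The main obstacle is tracking Gaussian eliminations in the composed reduced complex: although each individual $\sigma_i^{-1}$ is local and explicit, composition produces many potential cancellations. I see two ways to handle this. The direct route uses the small dimension of $\HOM(P_i,P_j)$ (at most $2$) to classify the possible chain maps and show combinatorially that the $P_i\langle -1\rangle$ created at the descent step cannot pair with anything in the inductively known $\beta'(P_r)$. The slicker route, which I would probably favor, goes through Garside normal form as in \cite{BravThomas}: express the negative lift in its (dual) Garside form, observe that the first atom in this normal form is exactly encoded by $D_L(w)$, and extract the layer-$1$ contribution directly from that atom, bypassing the bookkeeping. Either way, the orientation grading and the finiteness/ADE hypothesis enter to guarantee that no accidental cancellations occur across different atoms.
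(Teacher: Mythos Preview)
The paper does not prove this proposition; it is quoted from \cite[Proposition~4.13]{LQ} and immediately used as input to the next lemma. There is therefore no in-paper argument to compare yours against.

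On your proposal itself: the inductive skeleton is the natural one, but there is a genuine gap at the core step. Writing $w=s_iw'$ reduced only gives $i\notin D_L(w')$; it does \emph{not} give ``$s_i\notin$ reduced factors of $w'$'' as you assert, and $s_i$ will typically appear further along in reduced expressions for $w'$ (already in $A_2$: take $w=s_1s_2s_1$, $w'=s_2s_1$). Consequently $\beta'(P_r)$ can contain $P_i$ summands in both layer~$0$ and layer~$1$, and the Gaussian-elimination bookkeeping you defer is the entire content of the argument, not a technical detail. The choice $r=i$ in particular is not justified: you need to exhibit, for each $i\in D_L(w)$, an $r$ and a surviving map $\mathfrak{P}_i\langle -1\rangle\to\beta P_r$, and nothing in your sketch rules out that the freshly created $P_i\langle -1\rangle$ cancels against a $P_i$ already present in layer~$1$ of $\beta'(P_r)$.

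Your ``slicker route'' does not rescue this. For a single Weyl-group lift the classical Garside normal form consists of exactly one letter, namely $\beta$ itself, so there is no smaller first atom from which to read off $D_L(w)$. More seriously, in the surrounding text this proposition is precisely the tool used to detect Garside letters categorically (see the lemma immediately following it); invoking that detection to prove the proposition would be circular.
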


This result, holding for generators of the Garside structure, spreads as follows as one reads a full positive word. 

\begin{lemma}[{\cite[Lemma 4.17]{LQ}}]
  Let $\beta=\beta_k\cdots \beta_1\in \B^+(\Gamma)$ be right-greedy. Then:
  $E^+_k\left( \beta \cdot \left( \oplus_{i\in D^+_R(\beta_1)}P_i\right)\right)=D_l(\pi(\beta_k)).$
\end{lemma}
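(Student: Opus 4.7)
The plan is to argue by induction on the length $k$ of the right-greedy decomposition, using Proposition 4.13 as both the base case and the technical engine that we re-apply at every step.

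\textbf{Base case ($k=1$).} Here the claim reduces to $E^+_1\bigl(\beta_1 \cdot \oplus_{i \in D^+_R(\beta_1)} P_i\bigr) = D_l(\pi(\beta_1))$. Since $\beta_1 \in [1,\Delta]$ is a positive lift of $w_1 := \pi(\beta_1)$, I would unfold $\beta_1$ along a reduced expression for $w_1$ and apply equation \eqref{eq:actionsigmai} iteratively. The role of $\oplus_{i \in D^+_R(\beta_1)} P_i$ is precisely to select those $P_i$ on which the first simple twist in the chosen reduced expression produces a genuine two-term complex (rather than a pure grading shift); tracking the resulting homological depth through the remaining letters of the reduced word isolates, at depth $1$, the indices $D_l(w_1)$. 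This is Proposition 4.13, transported from the negative to the positive side.

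\textbf{Inductive step.} Assume $E^+_j(X_j) = D_l(\pi(\beta_j))$ for $X_j := \beta_j \cdots \beta_1 \cdot \bigl(\oplus_{i \in D^+_R(\beta_1)} P_i\bigr)$. After carrying out all Gaussian eliminations, the top layer of $X_j$ at homological depth $j$ is, up to internal gradings, the direct sum $\oplus_{i \in D_l(\pi(\beta_j))} P_i\langle -j\rangle[j]$. The right-greedy condition on the product $\beta_{j+1}\beta_j$ is exactly what ensures that $D_l(\pi(\beta_j)) \subseteq D^+_R(\beta_{j+1})$, so that this top layer is a legitimate input for the base-case analysis applied to $\beta_{j+1}$. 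Running Proposition 4.13 on that input produces a new top layer at depth $j+1$ indexed by $D_l(\pi(\beta_{j+1}))$, as required.

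\textbf{Main obstacle.} The delicate point is to check that the action of $\beta_{j+1}$ on the part of $X_j$ living strictly below depth $j$ contributes nothing to $E^+_{j+1}$. Since $\beta_{j+1} \in [1,\Delta]$ is a positive lift, equation \eqref{eq:actionsigmai} shifts each projective up by at most one homological degree per simple generator, so contributions from depth $<j$ can in principle reach depth $j+1$. Controlling them is the crux: I expect that a careful path-length accounting shows any such would-be contribution either lies in the wrong linear stratum (so is invisible to $E^+_{j+1}$, which tests against $\mathfrak{P}_i\langle -k\rangle$) or cancels via Gaussian elimination against a matching piece coming from the computed top layer. The right-greedy hypothesis is exactly the combinatorial input that rules out any surplus: without it one can construct examples where a descent of $\pi(\beta_j)$ is not absorbed by $\beta_{j+1}$, producing parasitic contributions and breaking the identification. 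The argument is in the same spirit as the proof of invertibility of the spherical twists, now made uniform across a full normal form.
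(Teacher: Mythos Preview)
The paper does not itself prove this lemma; it is quoted from \cite{LQ} with no argument given here. So I assess your proposal on its own terms.

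The inductive skeleton is right, but the inductive step contains a genuine error. You assert that right-greediness of $\beta_{j+1}\beta_j$ gives $D_l(\pi(\beta_j)) \subseteq D^+_R(\beta_{j+1})$. The inclusion runs the other way: right-greediness says $\beta_j$ has already absorbed every simple it can from the right of $\beta_{j+1}$, i.e.\ every right descent of $\beta_{j+1}$ is already a left descent of $\beta_j$, so $D_R(\beta_{j+1}) \subseteq D_L(\beta_j)$. And it is this direction the induction actually needs. With the correct inclusion the top layer at depth $j$, indexed by $D_L(\beta_j)$, \emph{contains} the seed $\oplus_{i\in D_R(\beta_{j+1})}P_i$ that the base case requires, guaranteeing $D_L(\beta_{j+1})$ shows up at depth $j{+}1$. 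Under your stated inclusion the top layer would only be a \emph{subset} of the seed, and a subset is not enough: in type $A_3$ with $\beta_{j+1}$ the lift of $s_1s_3$, feeding in $P_1$ alone gives $E^+_1=\{1\}\subsetneq\{1,3\}=D_L(s_1s_3)$. With the inclusion corrected you still owe the complementary upper bound $E^+_1(\beta_{j+1}\cdot P_i)\subseteq D_L(\beta_{j+1})$ for \emph{all} $i$, so that the surplus indices in $D_L(\beta_j)\setminus D_R(\beta_{j+1})$ cannot pollute the answer.

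Your ``main obstacle'' paragraph is honest but is not a proof. Writing that you \emph{expect} a path-length accounting or Gaussian cancellations to work is not an argument. What must actually be shown is that any single Garside letter $\beta_{j+1}\in[1,\Delta]$ sends $\calC^{[a,b]}$ into $\calC^{[a,b+1]}$, so that the part of $X_j$ sitting strictly below depth $j$ stays strictly below depth $j{+}1$ after applying $\beta_{j+1}$. This is the extension of the base case from single $P_i$'s to arbitrary linear objects, and it is the technical heart of the lemma; without it the induction does not close.
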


Then one has to make sure that what jumped up won't jump down when applying the negative part of a word from a well-chosen normal form to obtain Theorem~\ref{thm:LQ_spread}.

An interesting feature here is that the same category can be used for either one of the two degrees. It is thus possible to consider both of them at once, which yielded a proof in type D of a conjecture by Digne and Gobet (proven in type $A$ and $E$ by Digne and Gobet, and reproven since in type $D$ by Baumeister and Gobet).

\begin{theorem}[\cite{DigneGobet,LQ,BaumeisterGobet}]
  \[
[1,\gamma]\subset [\Delta^{-1},\Delta]
  \]
\end{theorem}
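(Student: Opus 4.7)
The plan is to exploit the fact that $\calC_\Gamma$ carries the path-length grading $\langle -\rangle$ and the orientation grading $\{-\}$ simultaneously, and to apply the two halves of Theorem~\ref{thm:LQ_spread} in tandem. Let $T \in [1,\gamma]$. By the Licata--Queffelec classification recalled just above, $T$ is the spherical twist $T_{C_\alpha}$ about an indecomposable dual-linear spherical object $C_\alpha$ corresponding to a positive root $\alpha$, whose underlying complex has all terms of the form $P_j[k]\{0\}$. The strategy is to convert this dual linearity of $C_\alpha$ into a bound on the classical (path-length) spread of $T$, which by Theorem~\ref{thm:LQ_spread} will force $T$ into the classical Garside interval $[\Delta^{-1},\Delta]$.

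First I would prove that $T\,\calC^0 \subset \calC^{[-1,1]}$ for the classical linear subcategory $\calC^0$. Unpacking, $T(X)$ sits in the triangle $C_\alpha \otimes \Hom(C_\alpha,X) \to X \to T(X)$. The space $\Hom(C_\alpha,X)$, when $X$ is path-length linear, is concentrated in a narrow range of path-length degrees by the $\HOM$-computation lemma between indecomposable projectives, and the dual-linear condition on $C_\alpha$ forces the terms of $C_\alpha$ to line up with those Hom-contributions so that the resulting cone only gains a shift of $\pm 1$ in path-length degree. This is essentially a Calabi--Yau self-duality check: the upper and lower path-length contributions to the cone pair against each other, leaving a single $\pm 1$ wobble around the original linear layer.

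Once this spread bound is established, the classical half of Theorem~\ref{thm:LQ_spread} gives that $T$ has word length at most $2$ in the generating set $[1,\Delta]\cup[1,\Delta]^{-1}$. To upgrade this to the fractional factorization $T = p\,q^{-1}$ with $p,q \in [1,\Delta]$ (which is precisely the statement $T\in[\Delta^{-1},\Delta]$), I would use the descent-detection results on $E^{\pm}_r$: the top and bottom path-length slices of $T$ applied to $\mathfrak{P}_i$ read off canonical left descent sets of positive prefixes of $\Delta$, one slice determining the numerator $p$ and the other the denominator $q$. Since $T=\beta\sigma_i\beta^{-1}$ with $\beta$ a prefix of $\gamma$, a direct check shows that these two descent sets assemble into genuine divisors of $\Delta$, closing the argument.

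The main obstacle will be the sharp uniform bound in the first step: a naive, term-by-term estimate of $T_{C_\alpha}$ would degrade with the height of the root $\alpha$, giving a classical spread growing with $|\alpha|$ and thus only placing $T$ in some larger Garside interval $[\Delta^{-n},\Delta^n]$. Extracting a uniform $\pm 1$ bound really uses that $C_\alpha$ is not just linear but \emph{dual}-linear in the orientation grading, together with the self-duality of the spherical-twist cone, and this is exactly where having both gradings available in the same category $\calC_\Gamma$ is crucial. That cancellation is what bypasses the delicate case-by-case combinatorics of Digne--Gobet and Baumeister--Gobet and supplies a uniform proof in types $A$, $D$, $E$ at once.
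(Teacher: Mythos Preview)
Your proposal has a gap at the very first step. You write ``Let $T \in [1,\gamma]$. By the Licata--Queffelec classification recalled just above, $T$ is the spherical twist $T_{C_\alpha}$\ldots'' --- but this is false. The spherical twists $T_{C_\alpha} = T_\alpha$ are only the \emph{atoms} of the lattice $[1,\gamma]$, the elements covering $1$. The full interval consists of all left prefixes of reflection factorizations $\gamma = T_1 \cdots T_n$; in the $A_3$ lattice drawn in Section~\ref{sec:dualgen} it contains $\sigma_1\sigma_2$, $\sigma_1\sigma_3$, $\sigma_2^{-1}\sigma_1\sigma_2\sigma_3$, and $\gamma = \sigma_1\sigma_2\sigma_3$ itself, none of which is a single spherical twist. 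Your later line ``$T = \beta\sigma_i\beta^{-1}$ with $\beta$ a prefix of $\gamma$'' confirms you are only treating atoms. So even granting everything else, you would only prove $T_\alpha \in [\Delta^{-1},\Delta]$ for each positive root $\alpha$, which is strictly weaker than the theorem. There is no cheap bootstrap from atoms to the whole interval: a general element of $[1,\gamma]$ is a product of up to $n$ atoms, spread is only subadditive, and $[\Delta^{-1},\Delta]$ is not closed under products, so atom-by-atom bounds degrade linearly in the rank.

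The paper does not spell out a proof here, but the surrounding material (Theorem~\ref{thm:LQ_spread} and the descent-detection statements that follow it) indicates the intended route: one works directly with the action of an arbitrary $\beta \in [1,\gamma]$ on linear complexes, not through a single spherical object. The dual half of Theorem~\ref{thm:LQ_spread} says such a $\beta$ has dual spread $1$, i.e.\ it moves dual-linear complexes within a single orientation-grading slice; since both gradings live on the same category $\calC_\Gamma$ and the differentials of a reduced complex are simultaneously homogeneous for both, this constrains how far $\beta$ can push a \emph{classically} linear complex in the path-length direction. That is exactly where ``considering both degrees at once'' enters, and it applies uniformly to every $\beta \in [1,\gamma]$, not just to the reflections. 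Your instinct that the interaction of the two gradings is the crux is correct; the error is trying to mediate it through one spherical twist rather than through the action on the whole linear heart.
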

What the above means is that any element between $1$ and $\gamma$ for the dual Garside structure can be expressed as a product of at most $2$ generators for the classical Garside structure, one positive, one negative.

\vspace{.5cm}

Let us now go back to better understanding linear complexes, and see what we can use this structure for.

\begin{example} \label{ex:curvetocomplex}
  \[
    \begin{tikzpicture}[anchorbase,scale=.5]
    % Disk
    \draw (0,0) ellipse (3 and 2);
    \node (1) at (-2,0) {$\bullet$};
    \node (2) at (-1,0) {$\bullet$};
    \node (3) at (-0,0) {$\bullet$};
    \node (4) at (1,0) {$\bullet$};
    \node (5) at (2,0) {$\bullet$};
    % Curves
    \draw [red] (2.center) to [out=60,in=90] (5.east);
    \draw [blue] (5.east) to [out=-90,in=-90] (4.west);
    \draw [green] (4.west) to [out=90,in=180] (2,-.2) to [out=0,in=-90] (2.2,0);
    \draw [purple] (2.2,0) [out=90,in=70] to (.4,0) to [out=-110,in=0] (2.south) to [out=180,in=-60] (1.center);
    \end{tikzpicture}
    \quad \leftrightarrow
    \quad
    \begin{tikzpicture}[anchorbase,scale=.7,every node/.style={scale=0.7}]
      \node [red] (a) at (0,0) {$P_2$};
      \node [red] (b) at (2,0) {$P_3\langle -1\rangle$};
      \node [red] (c) at (4,0) {$P_4\langle -2\rangle$};
      \node [blue] (d) at (6,0) {$P_4\langle -4\rangle$};
      \node [green] (e) at (8,0) {$P_4\langle -6\rangle$};
      \node [purple] (f) at (6,-.8) {$P_4\langle -4\rangle$};
      \node [purple] (g) at (4,-.8) {$P_3\langle -3\rangle$};
      \node [purple] (h) at (6,-1.6) {$P_2\langle -4\rangle$};
      \node [purple] (i) at (8,-.8) {$P_1\langle -5\rangle$};
      \node at (4,-.4) {$\oplus$};
      \node at (6,-.4) {$\oplus$};
      \node at (6,-1.2) {$\oplus$};
      \node at (8,-.4) {$\oplus$};
      \draw [->] (a) -- (b);
      \draw [->] (b) -- (c);
      \draw [->] (c) -- (d);
      \draw [->] (d) -- (e);
      \draw [->] (f) -- (e);
      \draw [->] (g) -- (f);
      \draw [->] (g) -- (h);
      \draw [->] (h) -- (i);
    \end{tikzpicture}
    \]
    We might want to re-organize the above complex as a double complex, to highlight the shifts in linear complexes. Going to the right uses a differential of degree $1$ (namely, a map $P_i\rightarrow P_{i+1}$) while pieces of differentials going up increase the degree by $2$, through a loop. We have also indicated the global homological degree, which is supported on diagonals.
    \[
    \begin{tikzpicture}[anchorbase,scale=.7,every node/.style={scale=0.7}]
      % Grid
      \draw [opacity=.5,dotted] (-1,-.75) -- (-1,3.75);
      \draw [opacity=.5,dotted] (1,-.75) -- (1,3.75);
      \draw [opacity=.5,dotted] (3,-.75) -- (3,3.75);
      \draw [opacity=.5,dotted] (5,-.75) -- (5,3.75);
      \draw [opacity=.5,dotted] (7,-.75) -- (7,3.75);
      \draw [opacity=.5,dotted] (-1,-.75) -- (7,-.75);
      \draw [opacity=.5,dotted] (-1,.75) -- (7,.75);
      \draw [opacity=.5,dotted] (-1,2.25) -- (7,2.25);
      \draw [opacity=.5,dotted] (-1,3.75) -- (7,3.75);
      \draw [opacity=.5, orange, dashed] (-1,.75) -- (1,-.75);
      \node [orange,opacity=.5] at (-1.2,.85) {$0$};
      \draw [opacity=.5, orange, dashed] (-1,2.25) -- (3,-.75);
      \node [orange,opacity=.5] at (-1.2,2.35) {$1$};
      \draw [opacity=.5, orange, dashed] (-1,3.75) -- (5,-.75);
      \node [orange,opacity=.5] at (-1.2,3.85) {$2$};
      \draw [opacity=.5, orange, dashed] (1,3.75) -- (7,-.75);
      \node [orange,opacity=.5] at (1,3.95) {$3$};
      \draw [opacity=.5, orange, dashed] (3,3.75) -- (7,.75);
      \node [orange,opacity=.5] at (3,3.95) {$4$};
      \draw [opacity=.5, orange, dashed] (5,3.75) -- (7,2.25);
      \node [orange,opacity=.5] at (5,3.95) {$5$};
       \node [red] (a) at (0,0) {$P_2$};
      \node [red] (b) at (2,0) {$P_3\langle -1\rangle$};
      \node [red] (c) at (4,0) {$P_4\langle -2\rangle$};
      \node [blue] (d) at (4,1) {$P_4\langle -4\rangle$};
      \node [green] (e) at (4,3) {$P_4\langle -6\rangle$};
      \node [purple] (f) at (4,2) {$P_4\langle -4\rangle$};
      \node [purple] (g) at (2,1.5) {$P_3\langle -3\rangle$};
      \node [purple] (h) at (4,1.5) {$P_2\langle -4\rangle$};
      \node [purple] (i) at (6,1.5) {$P_1\langle -5\rangle$};
      \draw [->] (a) -- (b);
      \draw [->] (b) -- (c);
      \draw [->] (c) -- (d);
      \draw [->] (d) to [bend right] (e);
      \draw [->] (f) -- (e);
      \draw [->] (g) -- (f);
      \draw [->] (g) -- (h);
      \draw [->] (h) -- (i);
    \end{tikzpicture}
    \]
\end{example}

In type $A_2$, we get strong constraints for spherical objects.

\begin{lemma}
In type $A_2$, spherical objects only occupy two columns at most.  
\end{lemma}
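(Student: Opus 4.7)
The plan is to introduce a column index on the terms of a reduced complex in $\calC_{A_2}$ and then show that for a spherical object this index takes at most two values.

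For a summand $P_i\langle s\rangle$ sitting in homological degree $d$, associate the column index $c := 2d + s$. Under a horizontal arrow $P_i\langle s\rangle \to P_j\langle s-1\rangle$ (a length-$1$ map between distinct projectives, taking degree $d$ to $d+1$), the index increases by $1$; under a vertical loop $P_i\langle s\rangle \to P_i\langle s-2\rangle$ it is preserved. Thus ``occupying $n$ columns'' in the double-complex layout of Example~\ref{ex:curvetocomplex} is equivalent to saying $c$ takes $n$ distinct values, necessarily forming a connected range of integers. The task is to show this range has length at most $1$, so at most two columns are touched.

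The first step is purely algebraic. A three-term linear strand $P_i \to P_j\langle -1\rangle \to P_k\langle -2\rangle$ with both differentials of path length~$1$ is forbidden in any complex: the arrows force $i \ne j$ and $j \ne k$, so in $A_2$ we must have $i = k$, and the composite is then $(k|j)(j|i) = x_i \in A_{A_2}$, which is nonzero. Hence three distinct column indices cannot all be realized inside a single linear strand.

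The subtle step is to rule out three columns arising from two shorter linear strands joined by a vertical loop, such as $P_1 \to P_2\langle -1\rangle \xrightarrow{x_2} P_2\langle -3\rangle \to P_1\langle -4\rangle$; this is a priori a valid complex because all relevant length-$3$ compositions vanish in $A_{A_2}$. Here sphericality enters: one checks directly that the degree-$1$ endomorphism of $X$ whose components are the two length-$1$ arrows (each shifted one homological position) represents a non-zero class in $\Hom^1(X,X)$, no non-trivial homotopies of the right grading being available in $A_{A_2}$; this contradicts sphericality. I expect the main obstacle to be turning this ad hoc non-sphericality check into a uniform argument covering every possible three-column configuration. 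The cleanest route is to invoke the correspondence between spherical objects and admissible arcs in the $3$-punctured disk recalled just before the lemma: a $3$-column complex in $\calC_{A_2}$ pulls back to a curve that must either self-intersect or close up as a loop around a marked point, rather than an arc between two distinct marked points, and such curves do not correspond to spherical objects.
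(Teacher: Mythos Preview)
Your final move—invoking the curve/spherical-object correspondence to rule out three-column configurations—is exactly what the paper does, so the overall strategy lands in the right place. But the intermediate ``direct'' check you sketch is wrong as stated: for $X = P_1 \to P_2\langle-1\rangle \xrightarrow{x_2} P_2\langle-3\rangle \to P_1\langle-4\rangle$, the degree-$1$ endomorphism $f=(a,0,b)$ built from the two length-$1$ arrows is null-homotopic. Taking $h_i=\lambda_i\cdot\id_{X_i}$ with $(\lambda_0,\lambda_1,\lambda_2,\lambda_3)=(1,0,0,-1)$ (or $(0,1,-1,2)$, depending on sign conventions) gives $dh\pm hd=f$, so your claimed nonzero class in $\Hom^1(X,X)$ vanishes. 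Your intuition that $X$ is not spherical is correct—for instance $\id_{P_1}:X_0\to X_3\langle 4\rangle$ gives a nonzero class in $\Hom(X,X[3]\langle 4\rangle)$ with no available homotopy—but the obstruction does not live where you placed it.

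On the reduction, the paper does something sharper than your ``no three-term linear strand''. It first lists the four linear pieces $P_1$, $P_2$, $P_1\to P_2\langle-1\rangle$, $P_2\to P_1\langle-1\rangle$, and observes that an indecomposable complex built from only one of the two length-$2$ pieces (plus singletons) is confined to two columns: loops identify $P_1$-ends with $P_1$-ends and $P_2$-ends with $P_2$-ends, so if every length-$2$ piece has its $P_1$ in the lower column and its $P_2$ in the higher one, the column range never moves. Hence three columns force \emph{both} $P_1\to P_2$ and $P_2\to P_1$ to be present. The curve check then becomes a single observation: an arc between two punctures in the three-punctured disk cannot pass the middle puncture once from above and once from below. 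Your route gets there too, but without this reduction the general ``every three-column configuration'' case is exactly the part you found hard to make uniform.
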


\begin{proof}
  In type $A_2$, linear pieces are $P_1$, $P_2$, $P_1\rightarrow P_2\langle -1\rangle$ and $P_2\rightarrow P_1\langle -1\rangle$. For an indecomposable module to occupy more than two columns, both $P_1\rightarrow P_2\langle-1\rangle$ and $P_2\rightarrow P_1\langle -1\rangle$ need to appear. One can see on a picture that this not possible for a curve with both endpoints on a puncture.
\end{proof}

One can read the faithfulness of the 3-strand Burau representation from this, giving a complicated proof to an old known result.

\begin{theorem} \label{thm:3BurauFaithful}
  The 3-strand Burau representation is faithful.
\end{theorem}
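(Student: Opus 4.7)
The strategy is to upgrade the triviality of $\beta$'s action on Burau to the triviality of its action on the whole Khovanov-Seidel category $\calC_{A_2}$, and then to invoke Theorem~\ref{thm:KhSfaithfulness} in type $A_2$. Suppose $\beta\in \B(A_2)$ lies in the kernel of the Burau representation, so that $[\beta P_i]=\alpha_i$ in $V_q$ for $i=1,2$. Since $\beta$ acts by an autoequivalence, each $\beta P_i$ is a spherical object in $\calC_{A_2}$ whose $K_0$-class is the bare simple root $\alpha_i$.

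The main step is to show $\beta P_i \simeq P_i$ in $\calC_{A_2}$. Via the curve/spherical-object bijection recalled earlier for type $A$, $\beta P_i$ corresponds (up to homological shift) to a curve $c_i$ on the disk with three marked points. By the preceding lemma, $c_i$ has at most one vertical tangency. I would enumerate the possibilities: straight arcs between pairs of marked points, whose Burau classes are $\alpha_1$, $\alpha_2$ or $\alpha_1+\alpha_2$; and single-U-turn curves with one vertical tangency wrapping around one of the three punctures, whose classes carry extra monomials coming from the wrapped puncture. The only curve producing the bare monomial $\pm q^m\alpha_i$ is the basic arc between marked points $i$ and $i+1$, which is exactly the one associated to $P_i$. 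Matching $(-1)^k q^m=1$ in $V_q$ forces $m=0$ and $k$ even; the residual possibility $\beta P_i\simeq P_i[2k']$ with $k'\neq 0$ for some $i$ would make $\Sigma_\beta$ act as a pure homological shift (equal on $P_1$ and $P_2$ by $\Hom$-preservation), which can be ruled out using the orientation grading, along which braid generators act with controlled, balanced degree.

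With $\beta P_1\simeq P_1$ and $\beta P_2\simeq P_2$ in hand, the fact that $P_1,P_2$ generate $\calC_{A_2}$ as a triangulated category forces the autoequivalence induced by $\beta$ to be naturally isomorphic to the identity. Theorem~\ref{thm:KhSfaithfulness} in type $A_2$ then yields $\beta=\id$ in $\B(A_2)$, as desired. The main obstacle is the enumeration in the middle paragraph: although the ``two columns'' restriction drastically narrows the spherical objects, one still has an a priori infinite family of single-U-turn curves, and one must check that none of them accidentally decategorifies to $\pm q^m\alpha_i$ by cancellation between the two columns. The cleanest route is to track the leading $q$-degree contribution to the Burau class, which for a U-turn wrapping puncture $j$ carries a nonzero $\alpha_j$-monomial that cannot be cancelled from inside a two-column complex, and then to close out the residual homological shift ambiguity with a dimension/orientation-grading check.
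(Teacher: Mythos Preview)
Your overall strategy---reduce to $\beta P_i\simeq P_i$ and invoke the categorical faithfulness---is the right one, and your treatment of the residual homological shift is in fact more careful than the paper's. But there is a gap in your reading of the two-column lemma. It bounds the number of \emph{columns} in the grid presentation (equivalently: a spherical object cannot use both linear pieces $P_1\to P_2\langle-1\rangle$ and $P_2\to P_1\langle-1\rangle$), not the number of vertical tangencies. Two-column objects can have arbitrarily many U-turns: for instance $\sigma_2^n P_1 \simeq \bigl(P_2\langle 2n{-}1\rangle\to\cdots\to P_2\langle 1\rangle\to P_1\bigr)$ has $n{-}1$ loop maps yet sits in the two columns $c\in\{-1,0\}$. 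So your enumeration into ``straight arcs'' and ``single-U-turn curves'' is incomplete, and the infinite family you worried about is genuinely larger than you allowed for.

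The paper sidesteps any enumeration with a one-line parity argument. In the grid (column $c$, row $r$, homological degree $c{+}r$, path-length shift $-(c{+}2r)$), two terms cancel in $K_0$ only if they carry the same $P_i$, equal $q$-degree, and opposite homological parity. The constraints $c{+}2r=c'{+}2r'$ and $c{+}r\not\equiv c'{+}r'\pmod 2$ force $r'{-}r$ odd and hence $|c{-}c'|=2|r'{-}r|\geq 2$: every cancellation is a knight move spanning at least three columns. Thus a two-column spherical object has \emph{no} cancellation whatsoever in $K_0$, and $[\beta P_i]=\alpha_i$ immediately forces the reduced $\beta P_i$ to be the single term $P_i[2k]$. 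Your leading-$q$-degree fallback is essentially a by-hand version of this same fact, but the knight-move count gives it in one stroke and for all spherical objects at once.
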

\begin{proof}
  In a complex presented as a grid, cancellations in the Grothendieck group occur through side knight moves: those take more than two columns. See the picture below, where we have indicated $q$-degree and homological degree and highlighted two terms that cancel when setting $t=-1$.
\[
\begin{tikzpicture}[anchorbase]
  \node at (-1,1) {$q^{-1}$};
  \node at (-1,0) {$t^{-1}q^{}$};
  \node at (-0,1) {$tq^{-2}$};
  \node at (-0,0) {$t\cdot1$};
  \node at (1,1) {$t^2q^{-3}$};
  \node at (1,0) {$tq^{-1}$};
  \node at (2,1) {$t^3q^{-4}$};
  \node at (2,0) {$t^2q^{-2}$};
  \node at (3,1) {$t^3q^{-5}$};
  \node at (3,0) {$t^3q^{-3}$};
  \draw [orange, thick] (0,1) circle (.45);
  \draw [orange, thick] (2,0) circle (.45);
\end{tikzpicture}
\]

\end{proof}

Unfortunately, the same proof does not extend to the $4$-strand braid group: not only can complexes spread on arbitrarily many columns, but one can even witness cancellations when passing from a spherical complex to its class in the Grothendieck group. This would for example happen for the following object:
\[
(\sigma_1\sigma_2^{-1}\sigma_1^{-1})^{-5}\sigma_3^2\sigma_2^{-1}\sigma_1\sigma_3^{-2}\sigma_2^{-1}P_1.
\]
Below is the associated curve with the two cancelling places marked with $\circ$:
\[
\begin{tikzpicture}[anchorbase]
  \node (1) at (1,0) {\textbullet};
  \node (2) at (2,0) {\textbullet};
  \node (3) at (3,0) {\textbullet};
  \node (4) at (4,0) {\textbullet};
  \draw [semithick] (1.center) to [out=40,in=180] (2,.8) to [out=0,in=140] (3,-.2) to [out=-40,in=-40] (3.1,.2) to [out=140,in=0] (2,1) to [out=180,in=40] (.9,.2) to [out=-140,in=-140] (1,-.2) to [out=40,in=180] (2,.6) to [out=0,in=140] (3,-.4) to [out=-40,in=-40] (3.2,.4) to [out=140,in=0] (2,1.2) to [out=180,in=40] (.8,.4) to [out=-140,in=-140] (1,-.4) to [out=40,in=180] (2,.4) to [out=0,in=180] (3,-.6) to [out=0,in=180] (4,.2) to [out=0,in=0] (4,-.2) to [out=180,in=0] (3,-.8) to [out=180,in=0] (2,.2) to [out=180,in=40] (1,-.6) to [out=180,in=-140] (.7,.6) to [out=40,in=180] (2.5,1.4) to [out=0,in=90] (4.4,0) to [out=-90,in=0] (3,-1) to [out=180,in=-40] (2.center);
  \node at (2.5,-.7) {$\circ$};
  \node at (2.5,.5) {$\circ$};
\end{tikzpicture}
\]

\section{Stability conditions and automata}

\subsection{Bridgeland stability conditions}

A stability condition is a machinery introduced by Bridgeland~\cite{Bridgeland} (in this context) that equips a triangulated category and provides with a way to cut objects in a convenient way.
\begin{definition}
  Let $\mathcal{T}$ be a triangulated category. A stability condition $\tau$ is a pair $(\mathcal{P},Z)$ with:
  \begin{itemize}
  \item $\mathcal{P}$ a slicing, that is, an additive subcategory $\mathcal{P}(\phi)$ for all $\phi\in \R$; objects of these subcategories are called \emph{semi-stable};
  \item $Z$ a central charge, which is a function: $Z:K_0(\mathcal{T})\mapsto \C$.
  \end{itemize}
  $\mathcal{P}$ and $Z$ are subject to the following conditions:
  \begin{enumerate}
  \item $\mathcal{P}(\phi+1)=\mathcal{P}(\phi)[1]$;
  \item if $A\in \calP(\phi)$ and $B\in \calP(\psi)$ with $\phi>\psi$ then $\Hom(A,B)=0$;
  \item for every non-zero object $X\in \mathcal{T}$, there exists a filtration (called \emph{Harder-Narasimhan filtration}):
    \[
    \begin{tikzpicture}
      %Top row
      \node (T0) at (0,0) {$0=X_0$};
      \node (T1) at (2,0) {$X_1$};
      \node (T2) at (4,0) {$X_2$};
      \node (Tdots) at (5,0) {$\cdots$};
      \node (Tn-1) at (6,0) {$X_{n-1}$};
      \node (Tn) at (8,0) {$X_n=X$};
      % Bottom row
      \node (B1) at (1,-2) {$Y_1$};
      \node (B2) at (3,-2) {$Y_2$};
      \node (Bdots) at (5,-2) {$\cdots$};
      \node (Bn) at (7,-2) {$Y_n$};
      % horizontal arrow
      \draw [->] (T0) -- (T1);
      \draw [->] (T1) -- (T2);
      \draw [->] (Tn-1) -- (Tn);
      % back arrows downwards
      \draw [->] (T1) -- (B1);
      \draw [->] (T2) -- (B2);
      \draw [->] (Tn) -- (Bn);
      % Upward dashed arrows
      \draw [dashed, ->] (B1) -- (T0) node [midway,xshift=-.2cm,yshift=-.1cm] {\tiny $+1$};
      \draw [dashed, ->] (B2) -- (T1) node [midway,xshift=-.2cm,yshift=-.1cm] {\tiny $+1$};
      \draw [dashed, ->] (Bn) -- (Tn-1) node [midway,xshift=-.2cm,yshift=-.1cm] {\tiny $+1$};
    \end{tikzpicture}
    \]
    where all triangles are exact, $Y_i\in \calP(\phi_i)$ and $\phi_1>\phi_2>\dots>\phi_n$;
  \item $Z\left(\calP(\phi)\setminus \{0\}\right)\subset \R^{+,*}e^{i\pi \phi}$.
  \end{enumerate}
  \end{definition}

In type $A$, stability conditions on $\calC$ have a nice curve-like interpretation. Rather than having all points on a line as in Example~\ref{ex:curvetocomplex}, let us draw them as the vertices of a polygon, as follows:
\[
\begin{tikzpicture}[anchorbase]
  \node (1) at (0,2) {$\bullet$};
  \node (2) at (0,0) {$\bullet$};
  \node (3) at (2,0) {$\bullet$};
  \node (4) at (2,2) {$\bullet$};
  \node at (-.2,2.2) {\small $1$}; 
  \node at (-.2,-.2) {\small $2$};
  \node at (2.2,-.2) {\small $3$};
  \node at (2.2,2.2) {\small $4$};
  \draw [red] (1.center) -- (2.center);
  \draw [blue] (2.center) -- (3.center);
  \draw [green] (3.center) -- (4.center);
  \draw [purple] (4.center) -- (1.center);
  \draw [orange] (1.center) -- (3.center);
  \draw [pink] (2.center) -- (4.center);
  \node [red] at (4,2) {\small $P_1$};
  \node [blue] at (4,1) {\small $P_2$};
  \node [green] at (4,0) {\small $P_3$};
  \node [purple] at (7,2) {\small $P_1\rightarrow P_2\langle -1\rangle \rightarrow P_3\langle -2\rangle$};
  \node [orange] at (7,1) {\small $P_1\rightarrow P_2\langle -1 \rangle$};
  \node [pink] at (7,0) {\small $P_2\rightarrow P_3\langle -1\rangle$};
\end{tikzpicture}
\]

Straight line segments correspond to stable objects of the category. The (or rather, a) corresponding stability condition has:
\begin{itemize}
\item $\calP(\phi)=0$ unless $\phi\in \{0,\frac{1}{8},\frac{1}{4},\frac{3}{8},\frac{1}{2},\frac{3}{4}\}+\Z$
\item $\calP(0)=\langle P_1\rangle$,
  $\calP(\frac{1}{8})=\langle P_1 \mapsto P_2\langle -1\rangle\rangle$,
  $\calP(\frac{1}{4})=\langle P_2,P_1{\mapsto}P_2{\langle}-1\rangle\mapsto P_2{\langle}-2\rangle\rangle$\\
  $\calP(\frac{3}{8})=\langle P_2\mapsto P_3\langle -1\rangle \rangle$, $\calP(\frac{1}{2})=\langle P_3\rangle$
\item $|Z(P_1)|=|Z(P_2)|=|Z(P_3)|=1$, the other ones can be deduced easily.
\end{itemize}

It can be convenient to draw the images of all stables in the heart of the $t$-structure (that is, $\calP([0,1[)$). The picture below corresponds to the stability condition described above.
    
    \[
    \begin{tikzpicture}[anchorbase]
      \draw [red,->] (0,0) -- (2,0);
      \node at (2.2,0) {\small $P_1$};
      \draw [purple,->] (0,0) -- (3.414,1.414);
      \node at (4.1,1.5) {\small $P_1\rightarrow P_2$};
      \draw [blue,->] (0,0) -- (1.414,1.414);
      \node at (1.7,1.35) {\small $P_2$};
      \draw [orange,->] (0,0) -- (3.414,3.414);
      \node at (4.2,3.6) {\small $P_1\rightarrow P_2\rightarrow P_3$};
      \draw [pink,->] (0,0) -- (1.414,3.414);
      \node at (1.2,3.7) {\small $P_2\rightarrow P_3$};
      \draw [green,->] (0,0) -- (0,2);
      \node at (0,2.3) {\small $P_3$};
    \end{tikzpicture}
    \]

    Notice that in this case, the slicing is the same as the one we got from the orientation grading. However there are more stability conditions than orientations. For example, one can let $P_1$ and $P_2$ cross, which yields the following:
\[
\begin{tikzpicture}[anchorbase]
  \draw [red,->] (0,0) -- (1.9,.6);
  \node at (2.1,.6) {\small $P_1$};
  \draw [purple,->] (0,0) -- (3.9,.6);
  \node at (4.1,.8) {\small $P_2\rightarrow P_1$};
  \draw [blue,->] (0,0) -- (2,0);
  \node at (2.3,0) {\small $P_2$};
  \draw [orange,->] (0,0) -- (3.9,2.6);
  \node at (4.2,3) {\small $P_1\rightarrow P_2\rightarrow P_3$};
  \draw [pink,->] (0,0) -- (2,2);
  \node at (1.6,2.4) {\small $P_2\rightarrow P_3$};
  \draw [green,->] (0,0) -- (0,2);
  \node at (0,2.3) {\small $P_3$};
\end{tikzpicture}
\]

One readily sees that no orientation yields this set of stable objects.

\begin{problem}
  What kind of Garside-like structure or normal form comes from the above stability condition?
\end{problem}

The following theorem of Bridgeland tells us that the moduli space of stability conditions provides us with a nice geometric object, that can be studied for its own sake, but should also be instrumental in the study of Artin-Tits groups, since it inherits an action of the group.

\begin{theorem}[\cite{Bridgeland}]
  $Stab$, the moduli space of stability conditions, is a complex manifold of complex dimension $n$.
\end{theorem}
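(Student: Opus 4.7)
The plan is to exhibit local charts for $Stab$ via the forgetful map
\[
\pi : Stab \longrightarrow \Hom_\Z(K_0(\mathcal{T}), \C), \qquad (\mathcal{P},Z) \mapsto Z,
\]
and show that $\pi$ is a local homeomorphism onto an open subset of the target. Since in our setting $K_0(\calC_\Gamma) \simeq V_q(\Gamma)$ has rank $n$, the target is a complex vector space of dimension $n$, which will endow $Stab$ with the required complex manifold structure.

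First I would equip $Stab$ with a natural topology. The standard choice is a generalized metric combining two ingredients: the operator norm on the space of central charges, and a distance between slicings defined by
\[
d(\mathcal{P}, \mathcal{Q}) = \sup_{0 \neq E}\bigl\{ |\phi^+_\mathcal{P}(E) - \phi^+_\mathcal{Q}(E)|,\; |\phi^-_\mathcal{P}(E) - \phi^-_\mathcal{Q}(E)| \bigr\},
\]
where $\phi^\pm$ denote the largest and smallest phases appearing in the Harder-Narasimhan filtration. Continuity of $\pi$ is then essentially built into the definition, since the axiom $Z(\calP(\phi)\setminus\{0\}) \subset \R^{+,*} e^{i\pi\phi}$ forces $Z$ to depend Lipschitz-continuously on $\mathcal{P}$.

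The heart of the argument is the deformation theorem: given $(\mathcal{P},Z) \in Stab$, there exists $\varepsilon > 0$ such that any $Z' \in \Hom(K_0,\C)$ with $\|Z' - Z\| < \sin(\pi\varepsilon)$ lifts to a unique stability condition $(\mathcal{P}', Z')$ with $d(\mathcal{P}, \mathcal{P}') < \varepsilon$. This is what gives the local homeomorphism property. The strategy is to construct $\mathcal{P}'$ slice by slice: start from the $\mathcal{P}$-semistables in a narrow window $(\phi-\varepsilon, \phi+\varepsilon)$, regard them as objects in the (quasi-)abelian category $\mathcal{P}((\phi-\varepsilon, \phi+\varepsilon))$, and rebuild the $\mathcal{P}'$-semistables of phase $\phi$ as the $Z'$-semistable objects inside this subcategory. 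One then glues these local slices together using the compatibility axioms.

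The main obstacle, and where Bridgeland's work really does the heavy lifting, is verifying that this local reconstruction yields genuine Harder-Narasimhan filtrations for every object in $\mathcal{T}$, not just for objects already close to $\mathcal{P}$-semistable. This relies on showing that the quasi-abelian categories $\mathcal{P}((\phi-\varepsilon,\phi+\varepsilon))$ admit a well-behaved $Z'$-stability theory (existence and uniqueness of HN filtrations with respect to the perturbed charge), together with a bootstrap that propagates these filtrations across the real line through the phase-shift axiom $\calP(\phi+1) = \calP(\phi)[1]$. Once uniqueness and existence are established, the inverse of $\pi$ is continuous by construction, and the complex structure on $\Hom(K_0,\C) \simeq \C^n$ transports to $Stab$, completing the proof.
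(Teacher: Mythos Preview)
The paper does not give its own proof of this theorem: it is stated with a citation to Bridgeland and then used as a black box. So there is no in-paper argument to compare against; your proposal is really a sketch of Bridgeland's original proof, and as such it is accurate in outline. The forgetful map $\pi:(\mathcal{P},Z)\mapsto Z$, the generalized metric on slicings via $\phi^\pm$, and the deformation result that small perturbations of $Z$ lift uniquely to nearby stability conditions are exactly the ingredients Bridgeland uses.

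One point worth tightening: Bridgeland's theorem in full generality only asserts that $\pi$ restricted to a connected component $\Sigma\subset Stab$ is a local homeomorphism onto some linear subspace $V(\Sigma)\subset \Hom_\Z(K_0,\C)$, not necessarily onto an open set of the full target. The conclusion that the dimension is exactly $n$ (rather than at most $n$) requires knowing that $V(\Sigma)$ is the whole space, which in the setting of $\calC_\Gamma$ follows because the simple-minded collection $\{P_i\}$ lets one vary the charges of the generators of $K_0$ independently. Your sketch implicitly assumes this when you say ``onto an open subset of the target''; it is true here, but it is an extra (easy) check rather than part of the general machinery.
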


\subsection{Support}

\begin{definition}
  Given an object $X\in \calC_{\Gamma}$ and a stability condition $\tau$, the \emph{$\tau$-support} of $X$ is the set of semi-stables that appear in its HN-filtration.
\end{definition}

In type $A$, the support can be read from pulling the curve straight.
\begin{example}
  \[
  \begin{tikzpicture}[anchorbase]
  \node (1) at (0,2) {$\bullet$};
  \node (2) at (0,0) {$\bullet$};
  \node (3) at (2,0) {$\bullet$};
  \node (4) at (2,2) {$\bullet$};
  \node at (-.4,2.4) {\small $1$}; 
  \node at (-.2,-.2) {\small $2$};
  \node at (2.2,-.2) {\small $3$};
  \node at (2.2,2.2) {\small $4$};
  \draw [semithick] (1.center) to [out=-45,in=135] (1.9,-.1) to [out=-45,in=-45] (2.1,.1) to [out=135,in=-45] (.1,2.1) to [out=135,in=90] (-.1,2) to [out=-90,in=90] (.1,0) to [out=-90,in=-90] (-.1,0) to [out=90,in=-90] (-.3,2) to [out=90,in=180] (0,2.3) to [out=0,in=180] (2,2);
  \end{tikzpicture}
  \quad
  \longrightarrow
  \quad
\begin{tikzpicture}[anchorbase]
  \node (1) at (0,2) {$\bullet$};
  \node (2) at (0,0) {$\bullet$};
  \node (3) at (2,0) {$\bullet$};
  \node (4) at (2,2) {$\bullet$};
  \node at (-.2,2.2) {\small $1$}; 
  \node at (-.2,-.2) {\small $2$};
  \node at (2.2,-.2) {\small $3$};
  \node at (2.2,2.2) {\small $4$};
  \draw [red] (1.center) -- (2.center);
  \draw [purple] (4.center) -- (1.center);
  \draw [orange] (1.center) -- (3.center);
\end{tikzpicture}  
  \]
\end{example}

\begin{problem}
  How do the HN filtration and the curve decomposition play together?
\end{problem}

\subsection{Action of braids on Stab}

Let $\tau \in Stab(\calC_\Gamma)$. Then $\beta \cdot \tau$ for $\beta \in B_{\Gamma}$ is given by $(\beta Z,\beta \calP)$ with :
\[
\begin{cases}
  (\beta \calP)(\phi)=\beta \calP(\phi) \\
  (\beta Z)[X]=Z([\beta^{-1}X])
\end{cases}
\]

So, $Stab$, or $PStab=Stab/(\C^\ast)$, provide us with geometric objects carrying an action of the Artin-Tits group. Furthermore, this action is faithful in all types where the action on $\calC_\Gamma$ is.

\begin{problem}
  Can we use this geometry for anything?
\end{problem}

In type $A_2$, the picture is especially nice: $PStab$ is a variety of real dimension $2$, that admits the following tessellation (see~\cite{BDL} for the origin of this picture):

\[
\begin{tikzpicture}[anchorbase]
  %circle
  \draw [opacity=.4,thick] (0,0) circle (3);
  \draw (90:3) -- (-150:3);
  \draw (90:3) -- (-30:3);
  \draw (-30:3) -- (-150:3);
  \draw (-90:3) -- (-150:3);
  \draw (-90:3) -- (-30:3);
  \draw (30:3) -- (90:3);
  \draw (30:3) -- (-30:3);
  \draw (150:3) -- (-150:3);
  \draw (150:3) -- (90:3);
  \foreach \x in {0,...,11}{
    \draw ({30*\x}:3) -- ({30*\x+30}:3);
  };
  \node at (0,0) {
    \begin{tikzpicture}[anchorbase,scale=.25,every node/.style={scale=0.5}]
      \draw [->,semithick] (0,0) -- (1,0) node [right] {$P_2$};
      \draw [->,semithick] (0,0) --  (0,1) node [above] {$P_1$};
      \draw [->,semithick] (0,0) --  (1,1) node [above right] {$P_1\rightarrow P_2$};
    \end{tikzpicture}
  };
  \node at (1.9,1.2) {
    \begin{tikzpicture}[anchorbase,scale=.25,every node/.style={scale=0.5}]
      \draw [->,semithick] (0,0) -- (1,0) node [right] {$P_1$};
      \draw [->,semithick] (0,0) --  (0,1) node [above] {$P_2$};
      \draw [->,semithick] (0,0) --  (1,1) node [above right] {$P_2\rightarrow P_1$};
    \end{tikzpicture}
  };
  \node at (0,-1.35) {
    \begin{tikzpicture}[anchorbase,scale=.25,every node/.style={scale=0.5}]
      \node at (0,0) {$\bullet$};
      \draw [->,semithick] (0,0) -- (-.7,0) node [left] {$P_1$};
      \draw [->,semithick] (0,0) --  (1.3,0) node [right] {$P_2$};
%      \draw [->,semithick] (0,0) --  (1,1) node [above right] {$P_2\rightarrow P_1$};
    \end{tikzpicture}
  };
  \node at (1.25,.55) {
    \begin{tikzpicture}[rotate=-60,anchorbase,scale=.25,every node/.style={scale=0.5}]
      \node at (0,0) {$\bullet$};
      \draw [->,semithick] (0,0) -- (1,0) node [left] {$P_1$};
      \draw [->,semithick] (0,0) --  (1.3,0) node [below right, xshift=-.2cm,yshift=.1cm] {$P_2$};
%      \draw [->,semithick] (0,0) --  (1,1) node [above right] {$P_2\rightarrow P_1$};
    \end{tikzpicture}
  };
  \node at (-1.2,.5) {
    \begin{tikzpicture}[rotate=60, anchorbase,scale=.25,every node/.style={scale=0.5}]
      \node at (0,0) {$\bullet$};
      \draw [->,semithick] (0,0) -- (-1.3,0) node [below left] {$P_1$};
      \draw [->,semithick] (0,0) --  (.7,0) node [right] {$P_2$};
%      \draw [->,semithick] (0,0) --  (1,1) node [above right] {$P_2\rightarrow P_1$};
    \end{tikzpicture}
  };
\end{tikzpicture}
\]

The similarity of this picture with the Farey triangulation of the hyperbolic disk is no accident, and we will make use of this later on.

\subsection{Automata}

Let $G$ be a group with generators $\{l_i\}$. A finite-state automaton associated to it is:
\begin{itemize}
\item a finite oriented graph (cycles and multi-edges are OK);
\item with edges labeled by generators $l_i$'s;
\item with compatibility with $G$: two paths starting from the same vertex (or state) that represent the same group element should land in the same state.
\end{itemize}
A word is {\it recognized} by the automaton if there is a path that writes that word. A group element is recognized if there is a recognized word for it.

The theory of (bi-)automatic group has been developed in particular by Charney~\cite{Charney_automatic}, and this kind of techniques is useful to solve the word problem.

Bapat, Deopurkar and Licata explain~\cite{BDL} how to build an automaton from the HN support as follows. One builds a graph with vertices labeled by all possible supports of a spherical object. Note that there are finitely many such supports. Then there is a $\beta$-labeled edge between vertices with labels $S_1$ and $S_2$ if for all spherical object $X$ having support $S_1$, $\beta X$ has support $S_2$.

\begin{example}
  \[
  \begin{tikzpicture}[anchorbase,scale=.5, every node/.style={scale=0.5}]
  \node (1) at (0,2) {$\bullet$};
  \node (2) at (0,0) {$\bullet$};
  \node (3) at (2,0) {$\bullet$};
  \node (4) at (2,2) {$\bullet$};
  \node at (-.2,2.2) {\small $1$}; 
  \node at (-.2,-.2) {\small $2$};
  \node at (2.2,-.2) {\small $3$};
  \node at (2.2,2.2) {\small $4$};
  \draw [red] (1.center) -- (2.center);
  \draw [purple] (4.center) -- (1.center);
  \draw [orange] (1.center) -- (3.center);
  \end{tikzpicture}  
  \xrightarrow{\quad \sigma_3 \quad}
  \begin{tikzpicture}[anchorbase,scale=.5, every node/.style={scale=0.5}]
  \node (1) at (0,2) {$\bullet$};
  \node (2) at (0,0) {$\bullet$};
  \node (3) at (2,0) {$\bullet$};
  \node (4) at (2,2) {$\bullet$};
  \node at (-.2,2.2) {\small $1$}; 
  \node at (-.2,-.2) {\small $2$};
  \node at (2.2,-.2) {\small $3$};
  \node at (2.2,2.2) {\small $4$};
  \draw [red] (1.center) -- (2.center);
  \draw [green] (3.center) -- (4.center);
  \draw [orange] (1.center) -- (3.center);
  \end{tikzpicture}    
  \]
  but there is no arrow labeled $\sigma_1$ out of the first vertex.
\end{example}

In type $A_2$ one can easily build an automaton:

\[
\begin{tikzpicture}[anchorbase, scale=.7, every node/.style={scale=0.7}]
  \node [draw, circle] (A) at (0,0) {
    \begin{tikzpicture}[anchorbase]
      \node at (0,1) {$\bullet$};
      \node at (0,0) {$\bullet$};
      \node at (1,0) {$\bullet$};
      \node at (0,1.2) {\small $1$};
      \node at (-.2,-.2) {\small $2$};
      \node at (1.2,0) {\small $3$};
      \draw [red] (0,1) -- (0,0);
      \draw [blue] (0,0) -- (1,0);
    \end{tikzpicture}
  };
  \node [draw, circle] (B) at (8,0) {
    \begin{tikzpicture}[anchorbase]
      \node at (0,1) {$\bullet$};
      \node at (0,0) {$\bullet$};
      \node at (1,0) {$\bullet$};
      \node at (0,1.2) {\small $1$};
      \node at (-.2,-.2) {\small $2$};
      \node at (1.2,0) {\small $3$};
      \draw [orange] (0,1) -- (1,0);
      \draw [blue] (0,0) -- (1,0);
    \end{tikzpicture}
  };
  \node [draw, circle] (C) at (4,-5) {
    \begin{tikzpicture}[anchorbase]
      \node at (0,1) {$\bullet$};
      \node at (0,0) {$\bullet$};
      \node at (1,0) {$\bullet$};
      \node at (0,1.2) {\small $1$};
      \node at (-.2,-.2) {\small $2$};
      \node at (1.2,0) {\small $3$};
      \draw [red] (0,1) -- (0,0);
      \draw [orange] (0,1) -- (1,0);
    \end{tikzpicture}
  };
  \draw [->] (A) to [out=120,in=45] (-2,2) node [above] {$\sigma_2$} to [out=-135,in=150] (A) ;
  \draw [->] (B) to [out=30,in=-45] (10,2) node [above] {$\sigma_X$} to [out=135,in=60] (B) ;
  \draw [->] (C) to [out=-100,in=180] (4,-8) node [below] {$\sigma_1$} to [out=0,in=-80] (C) ; 
  \draw [->] (A) to [out=20,in=160] node [above] {$\sigma_X$} (B);
  \draw [->] (C) to [out=150,in=-70] node [right] {$\sigma_2$} (A);
  \draw [->] (B) to [out=-100,in=30] node [right] {$\sigma_1$} (C);
  \draw [opacity=.8,->] (A) to [out=40,in=140] node [above] {$\gamma$} (B);
  \draw [opacity=.8,->] (C) to [out=170,in=-90] node [right] {$\gamma$} (A);
  \draw [opacity=.8,->] (B) to [out=-80,in=10] node [right] {$\gamma$} (C);
  \draw [opacity=.8,<-] (A) to [out=-10,in=-170] node [above] {$\gamma^{-1}$} (B);
  \draw [opacity=.8,<-] (C) to [out=130,in=-50] node [right] {$\gamma^{-1}$} (A);
  \draw [opacity=.8,<-] (B) to [out=-120,in=50] node [right] {$\gamma^{-1}$} (C);
\end{tikzpicture}
\]

Above $\sigma_X=\sigma_1\sigma_2\sigma_1^{-1}$.

The way to know if one can have an arrow out of a state is as follows: if one knows the HN filtration of a spherical object $A$, is it true that the image $\sigma_i A$ will always have the same prescribed HN-support? If yes, then the arrow is admissible, if not it shouldn't appear.

\begin{example}
  \[
   \begin{tikzpicture}[anchorbase]
      \node at (0,1) {$\bullet$};
      \node at (0,0) {$\bullet$};
      \node at (1,0) {$\bullet$};
      \node at (0,1.2) {\small $1$};
      \node at (-.2,-.2) {\small $2$};
      \node at (1.2,0) {\small $3$};
      \draw [red] (0,1) -- (0,0);
      \draw [blue] (0,0) -- (1,0);
   \end{tikzpicture}
   \xrightarrow{\sigma_2}
   \begin{tikzpicture}[anchorbase]
      \node at (0,1) {$\bullet$};
      \node at (0,0) {$\bullet$};
      \node at (1,0) {$\bullet$};
      \node at (0,1.2) {\small $1$};
      \node at (-.2,-.2) {\small $2$};
      \node at (1.2,0) {\small $3$};
      \draw [red] (0,1) to [out=-90,in=90] (-.1,0) to [out=-90,in=180] (0,-.1) to [out=0,in=180] (1,0);
      \draw [blue] (0,0) -- (1,0);
   \end{tikzpicture}  
   \]
   The target has same support. On the other hand applying $\sigma_1$ to the same support can yield different answers:
  \begin{gather*}
   \begin{tikzpicture}[anchorbase]
      \node at (0,1) {$\bullet$};
      \node at (0,0) {$\bullet$};
      \node at (1,0) {$\bullet$};
      \node at (0,1.2) {\small $1$};
      \node at (-.2,-.2) {\small $2$};
      \node at (1.2,0) {\small $3$};
      \draw [semithick] (0,1) to [out=-90,in=90] (-.1,0) to [out=-90,in=180] (0,-.1) to [out=0,in=180] (1,0);
   \end{tikzpicture}
   \xrightarrow{\sigma_1}
   \begin{tikzpicture}[anchorbase]
      \node at (0,1) {$\bullet$};
      \node at (0,0) {$\bullet$};
      \node at (1,0) {$\bullet$};
      \node at (0,1.2) {\small $1$};
      \node at (-.2,-.2) {\small $2$};
      \node at (1.2,0) {\small $3$};
      \draw [semithick] (0,0) -- (1,0);
   \end{tikzpicture}
   \\
      \begin{tikzpicture}[anchorbase]
      \node at (0,1) {$\bullet$};
      \node at (0,0) {$\bullet$};
      \node at (1,0) {$\bullet$};
      \node at (0,1.2) {\small $1$};
      \node at (-.2,-.2) {\small $2$};
      \node at (1.2,0) {\small $3$};
      \draw [semithick] (0,1) to [out=-90,in=90] (-.1,0) to [out=-90,in=180] (0,-.1) to [out=0,in=180] (1,-.1) to [out=0,in=-90] (1.1,0) to [out=90,in=0] (1,.1) to [out=180,in=0] (0,0);
   \end{tikzpicture}
   \xrightarrow{\sigma_1}
   \begin{tikzpicture}[anchorbase]
      \node at (0,1) {$\bullet$};
      \node at (0,0) {$\bullet$};
      \node at (1,0) {$\bullet$};
      \node at (0,1.2) {\small $1$};
      \node at (-.2,-.2) {\small $2$};
      \node at (1.2,0) {\small $3$};
      \draw [semithick] (0,0) to [out=0,in=180]  (1,-.1) to [out=0,in=-90] (1.1,0) to [out=90,in=-45] (1,.2) to [out=135,in=-45] (0,1);
   \end{tikzpicture}
   \end{gather*}
   
\end{example}

\begin{problem}
  When (and for which set of generators) does such an automaton recognizes all braids?
\end{problem}

The answer in type $A_2$ follows from the following result.

\begin{proposition}[\cite{BDL}]
  Every braid in $\B_{A_2}$ has an expression of the form:
  \[\beta=\gamma^n \sigma_{a_1}^{m_1}\cdots \sigma_{a_k}^{m_k},\]
  with $n$ a relative integer, $k$ a non-negative integer, the $m_i$'s positive integers, and $a_1$, \dots, $a_k$ a contiguous subsequence of $(\dots, X,1,2,X,1,2,X,\dots)$. In particular, every word is recognized by the automaton.
\end{proposition}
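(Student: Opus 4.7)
My plan is to establish the existence of this normal form by combining the three dual relations $\sigma_1\sigma_2 = \sigma_2\sigma_X = \sigma_X\sigma_1 = \gamma$ with the fact that conjugation by $\gamma$ cyclically permutes the generators via $\gamma\sigma_1\gamma^{-1} = \sigma_2$, $\gamma\sigma_2\gamma^{-1} = \sigma_X$, $\gamma\sigma_X\gamma^{-1} = \sigma_1$ (easy checks from the braid relation).

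First I would reduce to a positive tail. From the dual relations one reads off $\sigma_1^{-1} = \sigma_2\gamma^{-1}$, $\sigma_2^{-1} = \sigma_X\gamma^{-1}$, and $\sigma_X^{-1} = \sigma_1\gamma^{-1}$. Applying these substitutions turns any word in $\sigma_1^{\pm 1},\sigma_2^{\pm 1}$ into a word in $\{\sigma_1,\sigma_2,\sigma_X,\gamma^{\pm 1}\}$ with no negative $\sigma$-letters. Then all $\gamma^{\pm 1}$ factors can be commuted to the left using $\sigma\,\gamma^{\pm 1} = \gamma^{\pm 1}\phi^{\mp 1}(\sigma)$, where $\phi$ is the cyclic permutation above. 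Collecting the central-looking prefix, $\beta$ takes the form $\gamma^{n} w$ with $w$ a positive word in $\{\sigma_1,\sigma_2,\sigma_X\}$.

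The heart of the proof is to rewrite $w$ to match the cyclic pattern. The key identities, each a short direct computation using the braid relation, are
\[\sigma_1\sigma_X = \gamma^{-1}\sigma_2^{2}\sigma_X^{2},\qquad \sigma_X\sigma_2 = \gamma^{-1}\sigma_1^{2}\sigma_2^{2},\qquad \sigma_2\sigma_1 = \gamma^{-1}\sigma_X^{2}\sigma_1^{2}.\]
Each identity replaces an ``anti-cyclic'' transition in $\{(1,X),(X,2),(2,1)\}$ by a pair with the prescribed cyclic order in $\{(X,1),(1,2),(2,X)\}$, at the cost of emitting a single $\gamma^{-1}$. The emitted $\gamma^{-1}$ is then pushed to the left exactly as in the first step. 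Iterating on the leftmost anti-cyclic transition should clear $w$ of all anti-cyclic pairs; consolidating maximal runs of equal letters yields the desired form $\gamma^{n'}\sigma_{a_1}^{m_1}\cdots \sigma_{a_k}^{m_k}$.

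The main obstacle is termination: each rewrite grows the positive tail by two letters, so a naive length count does not decrease. I would introduce a lexicographic complexity pair (position of the rightmost anti-cyclic transition counted from the right, total letter count), and check that each rewrite strictly pushes anti-cyclic badness leftward without creating fresh anti-cyclic transitions to its right. A cleaner alternative is to invoke the classical isomorphism $\B(A_2)/Z(\B(A_2)) \simeq PSL_2(\Z) \simeq \Z/2 \ast \Z/3$, in which the image of $\gamma$ generates the $\Z/3$ factor and the center is $\langle \gamma^3 \rangle = \langle \Delta^2 \rangle$; the free-product normal form of $\Z/2 \ast \Z/3$, combined with a central $\gamma^{3\ell}$, yields precisely the stated pattern. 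Recognition by the automaton is then immediate: $\gamma^{n}$ steers to the state whose self-loop is $\sigma_{a_1}$, and each block $\sigma_{a_i}^{m_i}$ is read as $m_i - 1$ self-loops followed by the cyclic-transition edge to the next state, the pattern condition on $(a_i)$ being exactly what the transitions allow.
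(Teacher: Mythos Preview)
Your reduction to a positive word in $\sigma_1,\sigma_2,\sigma_X$ via $\sigma_i^{-1}=\sigma_j\gamma^{-1}$ and sliding of $\gamma^{\pm1}$ is a clean step that the paper's sketch actually glosses over. From there, however, you take a harder route than the paper. The paper reads the positive word from the right and notices that the very relations you listed at the outset, $\sigma_X\sigma_1=\sigma_1\sigma_2=\sigma_2\sigma_X=\gamma$, are precisely the two-letter blocks the automaton refuses to read: after normalising the rightmost letter to $\sigma_1$ by a power of $\gamma$, the only forbidden continuation is $\sigma_X$, and then $\sigma_X\sigma_1=\gamma$ collapses and slides. That rewrite \emph{shortens} the word, so induction on length is immediate and no separate termination argument is needed.

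You chose the opposite orientation, so your forbidden pairs $\sigma_1\sigma_X$, $\sigma_X\sigma_2$, $\sigma_2\sigma_1$ do not collapse, forcing the length-increasing identities. Your termination worry is then genuine, and the lexicographic pair you propose is not convincing as written (the ``position from the right'' grows together with the total length, so neither coordinate is bounded). The clean fix is simpler than either of your suggestions: count the number of anti-cyclic transitions. Pushing the emitted $\gamma^{-1}$ leftwards applies $\phi$ to each letter it passes, which preserves the cyclic/anti-cyclic nature of every transition in the prefix; the inserted block $\sigma_2^2\sigma_X^2$ carries only equal or cyclic transitions; the left boundary becomes $(\phi(a),2)=(\phi(a),\phi(1))$, inheriting the nature of $(a,1)$; and the right boundary $(X,b)$ is untouched. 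Hence each rewrite removes exactly one anti-cyclic transition and the process terminates. The $\Z/2\ast\Z/3$ alternative is valid but heavier than what is needed here.
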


\begin{proof}
  Clearly, an expression of this form is recognized by the automaton.

  Let us now define a rewriting process for a general word $w$ in the generators $\sigma_1$, $\sigma_2$ and $\sigma_X$, $w=\sigma_{a_k}\cdots \sigma_{a_1}$. Notice that this set of generators is closed under conjugation by $\gamma$, so powers of $\gamma$ may always slide.
Let us now consider a word $\sigma_{b_k}\cdots \sigma_{b_1}$ (notice that $b_1$ is at the far right, as we'll read from right to left).
  Start from the node with loop $\sigma_{b_1}$. Up to conjugation by a power of $\gamma$, we may assume that $b_1=1$. If $\sigma_{b_2}=\sigma_1$ or $\sigma_{b_2}=\sigma_2$ we are fine, and can conclude by induction. If $\sigma_{b_2}=\sigma_X$, then we notice that $\sigma_X\sigma_1=\sigma_1\sigma_2\sigma_1^{-1}\sigma_1=\sigma_1\sigma_2=\gamma$, that can slide. Then we conclude by induction. 
\end{proof}

This result was used in~\cite{BDL} to study the compactification of $Stab$.

\begin{problem}
  How can one understand the support of a spherical object, especially outside of type $A$?
\end{problem}

\subsection{Haagerup property}

In this section, I'll try to advocate that the tools we've built should be useful to address the Haagerup question. Let's first define this.

\begin{definition}
A finitely generated group $G$ has the Haagerup property if it has a proper action by affine isometries on a Hilbert space.
\end{definition}

Nice consequences of the Haagerup properties are the Baum-Connes property and the Novikov property, and more generally we have a landscape that looks like Figure~\ref{fig:geo}.

\begin{figure}[!h]
  \[
  \begin{tikzpicture}[anchorbase,every node/.style={transform shape}]
    \node [draw, rectangle] (Kazhdan) at (5,2) {Kazhdan's property (T)};
    \node [draw, rectangle] (amenable) at (0,0) {Amenable};
    \node [draw, rectangle] (haagerup) at (0,-1.5) {Haagerup};
    \node [draw, rectangle] (baumconnes) at (-1,-3) {Baum-Connes};
    \node [draw, rectangle] (orderable) at (4.5,-3) {Orderable};
    \node [draw, rectangle] (novikov) at (-1,-4.5) {Novikov};
    \node [draw, rectangle] (kadisonkaplanski) at (4,-4.5) {Kadison-Kaplansky};
    \node [draw, ellipse, fill=red, fill opacity=.6] (wrong) at (-1,2) {Wrong in general};
    \node [draw, ellipse, fill=green, fill opacity=.6] (open) at (7,-1) {Open in general};
    % Arrows
    \draw [double,->] (amenable) -- (haagerup);
    \draw [double, ->] (haagerup) -- (baumconnes);
    \draw [ ->] (baumconnes) edge [double] node [midway,xshift=.5 em,rotate=-90] {\tiny disc gps} (novikov);
    \draw [ ->] (baumconnes) edge [double] node [midway,yshift=.3 em,rotate=-16] {\tiny disc gps} (kadisonkaplanski);
    \draw [double, ->] (orderable) -- (kadisonkaplanski);
    \draw [->] (wrong) edge  [bend right] (amenable);
    \draw [->] (open) edge node [midway, yshift=.4em,rotate=5] {\tiny True type $A_2$} (haagerup);
    \draw [->] (open) edge node [midway, yshift=.1em, align=center,rotate=36] {\tiny True type $A$, $D$ \\[-3pt] \tiny special cases} (orderable);
    \draw [->] (open) edge node [midway, yshift=.4em,rotate=12] {\tiny True type A} (baumconnes.north east);
    \draw [->] (wrong) edge [bend left] (Kazhdan);
    \draw [<->] (haagerup) edge [bend right] node [midway,above,rotate=37] {\tiny As far as it gets} (Kazhdan);
%    \draw [->] (open) edge (novikov);
%    \draw [->] (open) edge (kadisonkaplanski);   
  \end{tikzpicture}
\]
\caption{Geometric landscape for Artin-Tits groups} \label{fig:geo}
\end{figure}
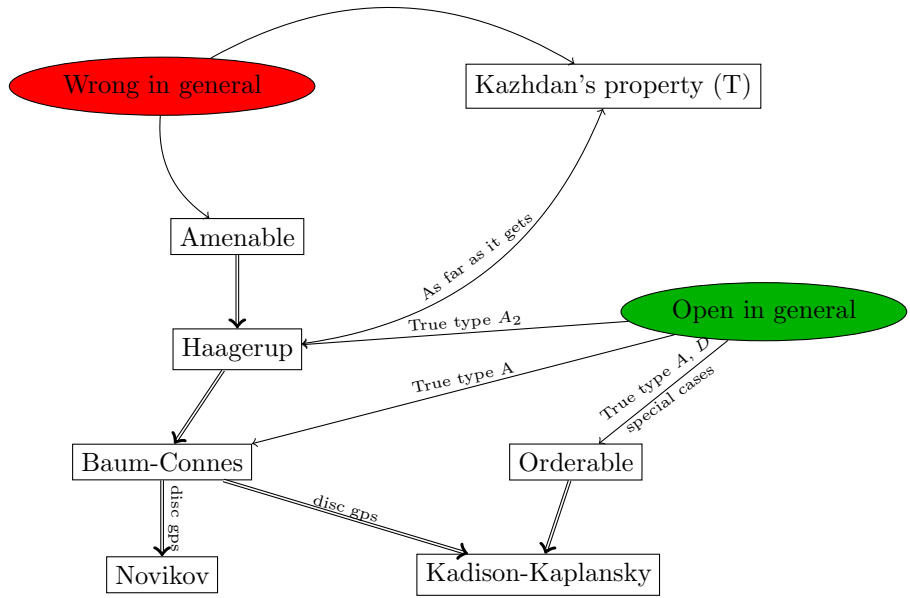

For category theorists that may not be used to work in Hilbert spaces, here is a criterion that we should find more handy.

\begin{lemma}[Haglund-Paulin criterion~\cite{HaglundPaulin}]
  Let $G$ be a finitely generated group, $X$ a set and $\mathcal{H}\subset \mathcal{P}(X)$ a set of so-called walls. Assume that:
  \begin{itemize}
  \item $\forall x,y \in X$, there are finitely many walls separating $x$ and $y$ (that is, finitely many $H$ in $\mathcal{H}$ with $x\in H$, $y \notin H$);
  \item $\forall x$, $\forall (g_n)_{n\in \N}$ with $l(g_n)\rightarrow \infty$, the number of walls separating $x$ and $g_nx$ goes to infinity as $n\rightarrow \infty$.
  \end{itemize}
  Then $G$ has the Haagerup property.
\end{lemma}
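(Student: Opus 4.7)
The plan is to construct an explicit proper affine isometric $G$-action on a Hilbert space built out of ``halfspaces,'' in the standard spirit of the CAT(0) cube complex / space-with-walls construction. The setup tacitly requires $G$ to act on $X$ preserving the collection $\mathcal{H}$ of walls; granting this, I form the set $\mathfrak{H}$ of \emph{halfspaces}: for every wall $H \in \mathcal{H}$, both $H$ and $X \setminus H$ are declared halfspaces. The $G$-action on $X$ then permutes $\mathfrak{H}$, giving a unitary representation $\pi : G \to \mathcal{U}(\ell^2(\mathfrak{H}))$ by $(\pi(g)f)(A) = f(g^{-1}A)$; this will be the linear part of the desired affine action.

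Next, fix a basepoint $x_0 \in X$ and for each $y \in X$ let $\sigma_y = \{A \in \mathfrak{H} : y \in A\}$. The first hypothesis is precisely the statement that $|\sigma_x \triangle \sigma_y|$ is finite for all $x,y \in X$ -- it equals twice the number of walls separating $x$ and $y$. Accordingly, define
\[
b : G \to \ell^2(\mathfrak{H}), \qquad b(g) = \mathbb{1}_{\sigma_{gx_0}} - \mathbb{1}_{\sigma_{x_0}},
\]
which genuinely lies in $\ell^2(\mathfrak{H})$ because its support is this finite symmetric difference. A direct computation using the equivariance $g \cdot \sigma_y = \sigma_{gy}$ yields the cocycle identity $b(gh) = b(g) + \pi(g) b(h)$, so that $\alpha(g) \cdot v := \pi(g)v + b(g)$ is a well-defined affine isometric action of $G$ on $\ell^2(\mathfrak{H})$.

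Finally, properness follows from the norm computation
\[
\|b(g)\|^2 = |\sigma_{gx_0} \triangle \sigma_{x_0}| = 2 \cdot \#\{H \in \mathcal{H} : H \text{ separates } x_0 \text{ and } gx_0\},
\]
combined with the second hypothesis: when $l(g_n) \to \infty$, this quantity tends to infinity, so $\alpha(g_n) \cdot 0 = b(g_n)$ eventually leaves every bounded subset of $\ell^2(\mathfrak{H})$, which is exactly the definition of a proper affine isometric action. The only point requiring actual care is the bookkeeping between walls (viewed as unordered partitions $\{H, X \setminus H\}$) and halfspaces, ensuring the $G$-action lifts coherently to $\mathfrak{H}$; once that convention is fixed, everything reduces to a short cocycle check and this norm count. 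The conceptual takeaway, and what makes the criterion so attractive for us, is that all the Hilbert-space analysis has been outsourced to the combinatorics of walls -- which in a categorical context we might hope to manufacture from the stability-condition / automaton machinery developed earlier.
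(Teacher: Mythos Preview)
The paper does not actually prove this lemma: it is stated with attribution to~\cite{HaglundPaulin}, followed only by the one-line remark that ``one can recover the original Hilbert space by looking at series of walls with the $l^2$ topology.'' Your proposal is a correct and complete fleshing-out of exactly that hint --- the standard spaces-with-walls cocycle on $\ell^2$ of halfspaces --- so there is nothing to compare beyond noting that you have supplied what the paper deliberately outsourced. Your observation that a $G$-action on $X$ preserving $\mathcal{H}$ is tacitly assumed is apt; the paper's statement omits this hypothesis, but it is of course required and is present in the application that follows.
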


One can recover the original Hilbert space by looking at series of walls with the $l^2$ topology.

It has been known for a long time that the 3-strand braid group has the Haagerup property. The classical proof goes by looking at the action of $PSL_3(\Z)=\B_{A_2}/Z(\B_{A_2})$ on the hyperbolic disk. Let's give this the hardest proof ever.

We'll first slightly modify the previous automaton by looking at the support of $\beta\{P_1,P_2,X\}$. Notice that this adds a central state. We'll also add inverses of the generators. In order to keep the picture lighter, I have removed the action of $\gamma$.

\[
\begin{tikzpicture}[anchorbase, scale=.7, every node/.style={scale=0.7}]
  \node [draw, circle] (A) at (0,0) {
    \begin{tikzpicture}[anchorbase]
      \node at (0,1) {$\bullet$};
      \node at (0,0) {$\bullet$};
      \node at (1,0) {$\bullet$};
      \node at (0,1.2) {\small $1$};
      \node at (-.2,-.2) {\small $2$};
      \node at (1.2,0) {\small $3$};
      \draw [red] (0,1) -- (0,0);
      \draw [blue] (0,0) -- (1,0);
    \end{tikzpicture}
  };
  \node [draw, circle] (B) at (12,0) {
    \begin{tikzpicture}[anchorbase]
      \node at (0,1) {$\bullet$};
      \node at (0,0) {$\bullet$};
      \node at (1,0) {$\bullet$};
      \node at (0,1.2) {\small $1$};
      \node at (-.2,-.2) {\small $2$};
      \node at (1.2,0) {\small $3$};
      \draw [orange] (0,1) -- (1,0);
      \draw [blue] (0,0) -- (1,0);
    \end{tikzpicture}
  };
  \node [draw, circle] (C) at (6,-7.5) {
    \begin{tikzpicture}[anchorbase]
      \node at (0,1) {$\bullet$};
      \node at (0,0) {$\bullet$};
      \node at (1,0) {$\bullet$};
      \node at (0,1.2) {\small $1$};
      \node at (-.2,-.2) {\small $2$};
      \node at (1.2,0) {\small $3$};
      \draw [red] (0,1) -- (0,0);
      \draw [orange] (0,1) -- (1,0);
    \end{tikzpicture}
  };
  \node [draw, circle] (M) at (6,-2.5) {
      \begin{tikzpicture}[anchorbase]
      \node at (0,1) {$\bullet$};
      \node at (0,0) {$\bullet$};
      \node at (1,0) {$\bullet$};
      \node at (0,1.2) {\small $1$};
      \node at (-.2,-.2) {\small $2$};
      \node at (1.2,0) {\small $3$};
      \draw [red] (0,1) -- (0,0);
      \draw [orange] (0,1) -- (1,0);
      \draw [blue] (0,0) -- (1,0);
    \end{tikzpicture}
  };
  \draw [->] (A) to [out=120,in=45] (-2,2) node [above] {$\sigma_2,\sigma_1^{-1}$} to [out=-135,in=150] (A) ;
  \draw [->] (B) to [out=30,in=-45] (14,2) node [above] {$\sigma_X, \sigma_2^{-1}$} to [out=135,in=60] (B) ;
  \draw [->] (C) to [out=-100,in=180] (6,-10) node [below] {$\sigma_1,\sigma_X^{-1}$} to [out=0,in=-80] (C) ; 
  \draw [->] (A) to [out=20,in=160] node [above] {$\sigma_X$} (B);
  \draw [->] (C) to [out=150,in=-70] node [left] {$\sigma_2$} (A);
  \draw [->] (B) to [out=-100,in=30] node [right] {$\sigma_1$} (C);
  \draw [<-] (A) to [out=10,in=170] node [below] {$\sigma_1^{-1}$} (B);
  \draw [<-] (C) to [out=140,in=-60] node [right] {$\sigma_X^{-1}$} (A);
  \draw [<-] (B) to [out=-110,in=40] node [left] {$\sigma_2^{-1}$} (C);
  \draw [->] (M) to node[above,rotate=-20] {$\sigma_2,\sigma_1^{-1}$} (A);
  \draw [->] (M) to node[above,rotate=20] {$\sigma_X,\sigma_2^{-1}$} (B);
  \draw [->] (M) to node[above,rotate=90] {$\sigma_1,\sigma_X^{-1}$} (C);
\end{tikzpicture}
\]

We will define a set of walls on $\mathcal{X}=\B_{A_2}\cdot \{P_1,P_2,X\}$\footnote{Secretly we're acting on $Stab$...} that satisfy the Haglund-Paulin criterion.

The automaton above gives us three walls: for each of the three outer nodes, either $\{A,B,C\}\in \mathcal{X}$ has support on this node, or not. Now we could have done the same with any other stability condition, and we would have had a similar automaton. So consider the set of walls that's induced by all stability conditions $\beta\cdot\tau_0$, if $\tau_0$ was one that had $\mathbf{D}=\{P_1,P_2,X\}$ as stables.

We want to check both conditions. Start with proving that there are finitely many walls between any two points in $\mathcal{X}$. It is enough to check it for $\mathbf{D}$ and $\sigma_2^{-1}\mathbf{D}$. Consider a braid $\mu$, and the stability condition $\tau=\mu^{-1}\tau_0$. It has stables $\mu^{-1}\mathbf{D}$. Then,
\begin{gather*}
  HN_{\mu^{-1}\tau_0}(\mathbf{D})=\mu^{-1}\cdot HN_{\tau_0}(\mu \mathbf{D}) \\
  HN_{\mu^{-1}\tau_0}(\sigma_2^{-1}\mathbf{D})=\mu^{-1}\cdot HN_{\tau_0}(\mu\sigma_2^{-1} \mathbf{D})
\end{gather*}
So the question comes down to: does $\tau_0$ separate (for one of its three walls) $\mu\mathbf{D}$ and $\mu\sigma_2^{-1} \mathbf{D}$? Or in other words: do $\mu \mathbf{D}$ and $\mu\sigma_2^{-1}\mathbf{D}$ belong to the same state?

Let's take for $\mu$ an expression that's recognized from the ground state. Notice that $\sigma_2^{-1}$ takes the ground state to the one at the top right. If $\mu$ is also recognized from this state, then both $\mu$ and $\mu \sigma_2^{-1}$ have the same target state, as a given letter always maps to the same state. Then $\tau_0$ does not tell the two points apart.

Now let's assume that $\mu$ is not recognized from the top right state. Write $\mu=\mu_k\cdots \mu_1$ (recognized from the ground state). Notice that since $\mu_1$ maps to the same place from any state, $\mu_1$ can't be recognized from the top right state, otherwise $\mu$ would be. But that means that $\mu_1=\sigma_2$ or $\mu_1=\sigma_X^{-1}$. If $k=1$, then we see three obvious walls telling apart $\mathbf{D}$ and $\sigma_2^{-1}\mathbf{D}$. If $k>1$, then the question gets translated to $\tau_0$ telling apart $(\mu_k\cdots \mu_2 \sigma_2 \mathbf{D},\mu_k\cdots \mu_2\mathbf{D})$ (if $\mu_1=\sigma_2$) or $(\mu_k \cdots \mu_2\sigma_X\mathbf{D},\mu_k\cdots \mu_2\mathbf{D})$ (if $\mu_1=\sigma_X^{-1}$ and using $\sigma_X^{-1}\sigma_2^{-1}=\gamma^{-1}$).

Consider the first case. We know that $\mu_k\cdots \mu_2$ is recognized from the top left state, so also from the central state (this is a key property of the automaton that's easily established in this case). So now both $\mu_k\cdots \mu_2$ and $\mu_k\cdots \mu_2\sigma_2$ are recognized from the central state, which means that the have the same target state, and the two elements of $\mathcal{X}$ are not told apart.

The second case is similar: $\mu_k\cdots \mu_2$ is recognized from the bottom state, so also from the central state, and thus $\mu_k\cdots \mu_2$ and $\mu_k\cdots \mu_2\cdot \sigma_X^{-1}$ have same target space.

Let us now consider the statement about walls to infinity. Consider a braid $\beta_n\cdot \beta_1$ in the letters $\sigma_1$, $\sigma_2$ and $\sigma_X$, recognized by the automaton from the ground state.

\noindent {\bf Claim:} $\beta_n\cdots \beta_k\tau_0$ tells apart $\mathbf{D}$ and $\beta\mathbf{D}$ for all $k$.

This amounts to comparing the $\tau_0$-supports of $\beta_k^{-1}\cdots \beta_n^{-1}\mathbf{D}$ and $\beta_{k-1}\cdots \beta_1\mathbf{D}$.

Notice the following property of this automaton: if $\mu_n\cdots \mu_1$ is recognized, then so is $\mu_1^{-1}\cdots \mu_n^{-1}$ (look at admissible 2-words).

This means that the right subword $\beta_k^{-1}\cdots \beta_n^{-1}$ is recognized from the ground state, ending at the state with loop $\beta_k^{-1}$. Similarly, $\beta_{k-1}\cdots \beta_1$ is recognized from the ground state, ending at the state with loop $\beta_{k-1}$. These two states are different: one can check that if $u$ and $v$ are loops at the same state, then $uv^{-1}$ is never recognized.
 
So there indeed are infinitely many walls.

\appendix
\section{Quadratic dual} \label{appendix:quadratic_dual}

The point of this appendix is to clarify the relation between Khovanov-Seidel's zigzag algebra and the preprojective algebra~\cite{GelfandPonomarev}, as mentioned for example in~\cite{QiSussan}.

Classically (thank you Wikipedia), a quadratic algebra is a filtered algebra generated by elements in degree one, subject to homogeneous relations of degree 2. Here we need to slightly modify the definition, as we also have idempotents. So let's say that a non-unital quadratic algebra is an algebra with given idempotents in degree zero, other generators in degree one, and homogeneous relations of degree 2 (that come in addition to the idempotency ones).

The classical picture is that of an algebra presented as $T(V)/\langle S\rangle$, with $V$ a vector space generated by the generators and $S\subset V\otimes V$ the vector space generated by the relations. Then the quadratic dual is the quadratic algebra generated by $V^\ast$ with relations $S^{\perp}\subset V^\ast \otimes V^\ast$.

Let's start from the preprojective algebra in type $A_n$, with orthogonal idempotents $e_i$'s, generators $\gamma_{i,i+1}=(i|i+1)=e_i\gamma_{i,i+1}=\gamma_{i,i+1}e_{i+1}$ and $\gamma_{i,i-1}=(i|i-1)=e_i\gamma_{i,i-1}=\gamma_{i,i-1}e_{i-1}$, and relations $(i|i+1|i)=(i|i-1|i)$.

Let's consider an idempotented version of the quadratic dual by considering within $V\otimes V$ the span of composable pairs: $V\otimes_{\oplus \langle e_i\rangle}V$. This space has dimension $4n-6$: it is generated by pairs $\gamma_{i\pm 1,i}\otimes \gamma_{i,i\pm 1}$, which makes 4 generators per node except for the extremal ones that contribute only one each. Let's introduce $\{p_i\}$ a set of orthogonal idempotents (duals of the $e_i$'s), and consider the algebra generated by the $p_i$'s and $p_i\gamma_{i+1,i}^\ast p_{i+1}=\gamma_{i+1,i}^\ast$, with relations $S^{\perp}\subset V\otimes_{\oplus \langle e_i\rangle}V$. Since $S$ generates a space of dimension $2n-2$ in a space of dimension $4n-6$, we expect $3n-4$ independent relations. $3n-6$ of them are clear:
\begin{itemize}
\item $\gamma_{i,i+1}^\ast\otimes \gamma_{i+1,i}^\ast+\gamma_{i,i-1}^\ast\otimes \gamma_{i-1,i}^\ast$;
\item $\gamma_{i,i+1}^\ast\otimes \gamma_{i+1,i+2}^\ast$;
\item $\gamma_{i,i-1}^\ast \otimes \gamma_{i-1,i-2}^\ast$.
\end{itemize}
By considering the endpoints, one also finds $\gamma_{1,2}^\ast\otimes \gamma_{2,1}^\ast$ and $\gamma_{n,n-1}^\ast\otimes \gamma_{n-1,n}^\ast$. In order to have matching signs, one can define: $x_{i,i+1}=(-1)^i\gamma_{i,i+1}^\ast$.

We finally find that the idempotented quadratic dual of the preprojective algebra is a twice-based version of the Khovanov-Seidel zigzag algebra:
\begin{itemize}
\item generators $p_i$ and $x_{i,i+1}=p_ix_{i,i\pm1}p_{i\pm1}$;
\item relations:
  \begin{itemize}
  \item idempotency,
  \item $x_{i,i\pm 1}x_{i\pm 1,i}=x_{i,i\mp 1}x_{i\mp 1,i}$,
  \item $x_{i,i\pm 1}x_{i\pm 1,i\pm 2}=0$,
  \item $x_{1,2}x_{2,1}=0$, $x_{n,n-1}x_{n-1,n}=0$.
  \end{itemize}
\end{itemize}

%% \bibliographystyle{amsplain-ac}
%% \bibliography{biblio}

\providecommand{\bysame}{\leavevmode\hbox to3em{\hrulefill}\thinspace}
\providecommand{\MR}{\relax\ifhmode\unskip\space\fi MR }
% \MRhref is called by the amsart/book/proc definition of \MR.
\providecommand{\MRhref}[2]{%
  \href{http://www.ams.org/mathscinet-getitem?mr=#1}{#2}
}
\providecommand{\href}[2]{#2}

% ==============================================================================
%
\end{document}